\newcommand{\biggg}{\bBigg@\thr@@}
\newcommand{\Biggg}{\bBigg@{3.5}}
\DeclareMathOperator{\erf}{erf}
\DeclareMathOperator{\sgn}{sgn}
\journalname{}
\begin{document}

\title{A fast convolution method for the fractional Laplacian in $\mathbb{R}$\thanks{This research was partially supported by the research group grant IT1615-22 funded by the Basque Government, and by the grant 
PID2021-126813NB-I00 funded by MCIN/AEI/10.13039/501100011033 and by ``ERDF A way of making Europe''.}
}


\author{Jorge Cayama \and  Carlota Mar\'{\i}a Cuesta \and Francisco de la Hoz  \and Carlos Javier Garc\'{\i}a-Cervera}

\authorrunning{J. Cayama, C.~M. Cuesta, F. de la Hoz, C.~J. Garc\'{\i}a-Cervera} 

\institute{
	J. Cayama \at
	Innovalia Metrology, Parque Tecnol\'ogico de Bizkaia, Edificio 500, Planta 1, 48160 Derio, Spain
	\and
	C.~M. Cuesta \at
	Department of Mathematics, Faculty of Science and Technology, University of the Basque Country \mbox{UPV/EHU}, Barrio Sarriena S/N, 48980 Leioa, Spain
	\and
	F. de la Hoz \at
	Department of Mathematics, Faculty of Science and Technology, University of the Basque Country \mbox{UPV/EHU}, Barrio Sarriena S/N, 48980 Leioa, Spain \\
	\email{francisco.delahoz@ehu.eus}           
	\and
	C.~J. Garc\'{\i}a-Cervera \at
	Department of Mathematics, University of California, Santa Barbara, CA 93106, USA
}

\date{Received: date / Accepted: date}

\maketitle

\begin{abstract}

In this article, we develop a new method to approximate numerically the fractional Laplacian of functions defined on $\mathbb R$, as well as some more general singular integrals. After mapping $\mathbb R$ into a finite interval, we discretize the integral operator using a modified midpoint rule. The result of this procedure can be cast as a discrete convolution, which can be evaluated efficiently using the Fast-Fourier Transform (FFT). The method provides an efficient,  second order accurate,  approximation to the fractional Laplacian, without the need to truncate the domain.

We first prove that the method gives a second-order approximation for the fractional Laplacian and other related singular integrals; then, we detail the implementation of the method using the fast convolution, and give numerical examples that support its efficacy and efficiency; finally, as an example of its applicability to an evolution problem, we employ the method for the discretization of the nonlocal part of the one-dimensional cubic fractional Schr\"odinger equation in the focusing case.

\keywords{fractional Laplacian \and singular integrals \and numerical quadrature \and fast convolution \and fractional Schr\"odinger equation}
\subclass{26A33 \and 35R11 \and 65D32}

\end{abstract}

\section{Introduction and Preliminaries}

In this paper, we develop a new method that uses the fast convolution to approximate numerically the fractional Laplacian on $\mathbb R$ and some more general singular integrals. Before describing the method, let us focus on the type of operator that we want to approximate.

Among the many possible definitions of the fractional Laplacian (see, e.g., \cite{kwasnicki}, where the author considers $-(-\Delta)^{\alpha/2}$ with $\alpha\in(0,2)$), we choose (as we do in \cite{cayamacuestadelahoz2020,cayamacuestadelahoz2021}):
\begin{equation}
	\label{e:fraclapl}
	(-\Delta)^{\alpha/2}u(x) = c_\alpha\int_{-\infty}^\infty\frac{u(x)-u(x+y)}{|y|^{1+\alpha}}dy,
\end{equation}
where $\alpha\in(0,2)$, and
$$
c_\alpha = \alpha\frac{2^{\alpha-1}\Gamma(1/2+\alpha/2)}{\sqrt{\pi}\Gamma(1-\alpha/2)};
$$
note that all the integrals in this paper that are not absolutely convergent must be understood in the principal value sense.

Henceforth, we concentrate on applying (\ref{e:fraclapl}) to regular functions. In particular, whenever $u$ is a twice continuously differentiable bounded function, i.e., $u\in\mathcal C_b^2(\mathbb{R})$, we can express \eqref{e:fraclapl} as (see \cite{cayamacuestadelahoz2021}):
\begin{equation}
\label{e:fraclapl1}
(-\Delta)^{\alpha/2}u(x) = \frac{c_\alpha}{\alpha}\int_{0}^{\infty}\frac{u_x(x-y) - u_x(x+y)}{y^\alpha}dz,
\end{equation}
from which the following lemma follows.
\begin{lemma}[\cite{cayamacuestadelahoz2021}]
	Let $u\in\mathcal C_b^2(\mathbb{R})$, then, if $\alpha\in [1,2)$, or $\alpha\in(0,1)$ and $\lim_{x\to\pm \infty}u_x(x)=0$, 
	\begin{equation}
		\label{e:fraclapl2}
		(-\Delta)^{\alpha/2}u(x)=\left\{
		\begin{aligned}
			& \frac{1}{\pi}\int_{-\infty}^\infty \frac{u_{x}(y)}{x-y}dy, & & \text{if $\alpha = 1$}, \\
			& \frac{c_{\alpha}}{\alpha(1-\alpha)}\int_{-\infty}^\infty \frac{u_{xx}(y)}{|x-y|^{\alpha-1}}dy, & & \text{if $\alpha \neq 1$}.
		\end{aligned}
		\right.
	\end{equation}
\end{lemma}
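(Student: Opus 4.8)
The plan is to derive \eqref{e:fraclapl2} from the representation \eqref{e:fraclapl1} by one further integration by parts when $\alpha\neq1$, and then to ``unfold'' the resulting half-line integral back onto $\mathbb R$ by the substitutions $s=x-y$ and $s=x+y$. Throughout I would write $g(y):=u_x(x-y)-u_x(x+y)$, so that $g(0)=0$, $g'(y)=-h(y)$ with $h(y):=u_{xx}(x-y)+u_{xx}(x+y)$, $\|g\|_\infty\le2\|u_x\|_\infty$, $\|h\|_\infty\le2\|u_{xx}\|_\infty$, and $|g(y)|\le2y\,\|u_{xx}\|_\infty$ for small $y$; then \eqref{e:fraclapl1} reads $(-\Delta)^{\alpha/2}u(x)=\tfrac{c_\alpha}{\alpha}\int_0^\infty y^{-\alpha}g(y)\,dy$, and, since the primitive $\Phi(y):=2u(x)-u(x+y)-u(x-y)$ of $g$ is bounded, Dirichlet's test shows this integral converges (absolutely near $0$ because $\alpha<2$, as an improper integral near $+\infty$).

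For $\alpha=1$, I would introduce a cutoff, writing $\int_\varepsilon^\infty y^{-1}g(y)\,dy=\int_\varepsilon^\infty\frac{u_x(x-y)}{y}\,dy-\int_\varepsilon^\infty\frac{u_x(x+y)}{y}\,dy$, substitute $s=x-y$ and $s=x+y$ respectively, and let $\varepsilon\to0^+$; the two pieces recombine into $\mathrm{p.v.}\int_{-\infty}^\infty\frac{u_x(s)}{x-s}\,ds$, the singularity at $s=x$ being precisely the integrable cancellation of $y^{-1}g(y)$ as $y\to0^+$. Since $c_1=1/\pi$, this yields the first line of \eqref{e:fraclapl2}.

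For $\alpha\neq1$, I would use $\int y^{-\alpha}\,dy=\tfrac{y^{1-\alpha}}{1-\alpha}$ and integrate $\int_\varepsilon^R y^{-\alpha}g(y)\,dy$ by parts, producing the boundary term $\bigl[\tfrac{y^{1-\alpha}}{1-\alpha}g(y)\bigr]_\varepsilon^R$ and the bulk term $\tfrac1{1-\alpha}\int_\varepsilon^R y^{1-\alpha}h(y)\,dy$. The contribution at $\varepsilon$ vanishes as $\varepsilon\to0^+$ since $g(y)=O(y)$ and $1-\alpha>-1$. The contribution at $R$ vanishes as $R\to\infty$: for $\alpha\in(1,2)$ because $y^{1-\alpha}\to0$ while $g$ is bounded; for $\alpha\in(0,1)$ this is exactly where the extra hypothesis $\lim_{x\to\pm\infty}u_x(x)=0$ is used, to make $g(R)\to0$. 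Then I would substitute $s=x-y$ and $s=x+y$ in $\int_0^\infty y^{1-\alpha}u_{xx}(x-y)\,dy+\int_0^\infty y^{1-\alpha}u_{xx}(x+y)\,dy$, turning it into $\int_{-\infty}^\infty|x-s|^{1-\alpha}u_{xx}(s)\,ds$, and collect the constant $\tfrac{c_\alpha}{\alpha}\cdot\tfrac1{1-\alpha}=\tfrac{c_\alpha}{\alpha(1-\alpha)}$ to reach the second line of \eqref{e:fraclapl2}.

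The hard part will be the boundary term as $R\to\infty$ in the case $\alpha\in(0,1)$: it equals $\tfrac1{1-\alpha}R^{1-\alpha}g(R)$, and because of the growing weight $R^{1-\alpha}$ the mere fact that $g(R)\to0$ is not quite enough, so one must invoke the decay rate on $u_x$ implicit in $u$ and $u_{xx}$ both being bounded; the clean way to do this is to keep the integration by parts on $[\varepsilon,R]$ only, to show that $\int_0^\infty y^{-\alpha}g(y)\,dy$ and $\int_0^\infty y^{1-\alpha}h(y)\,dy$ both converge as improper integrals at $+\infty$, and to check that the identity between them survives the limit $R\to\infty$. Once this convergence issue is settled, the remainder is routine bookkeeping of the changes of variables and of the constant $c_\alpha$.
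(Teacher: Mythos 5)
First, a remark on the comparison itself: the paper does not prove this lemma — it is imported verbatim from \cite{cayamacuestadelahoz2021} — so there is no internal proof to measure you against. Your strategy (integrate \eqref{e:fraclapl1} by parts on $[\varepsilon,R]$, kill the boundary terms, and unfold the two half-line integrals onto $\mathbb R$ via $s=x\mp y$) is the natural route and is surely the one taken in the cited reference. The bookkeeping is correct throughout: $|g(y)|\le 2y\|u_{xx}\|_\infty$ disposes of the term at $\varepsilon$ because $\alpha<2$; the constant collects to $c_\alpha/(\alpha(1-\alpha))$; $c_1=1/\pi$; the $\alpha=1$ recombination into a principal value (symmetric about $x$ both at the singularity and at infinity) is right; and for $\alpha\in(1,2)$ the argument is complete as you describe it, since there $R^{1-\alpha}\to0$ against a bounded $g$.

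The genuine gap is exactly where you locate it, but your proposed repair does not close it. For $\alpha\in(0,1)$, the hypotheses $u\in\mathcal C_b^2(\mathbb R)$ and $u_x\to0$ do \emph{not} imply $R^{1-\alpha}g(R)\to0$, nor that $\int_0^\infty y^{1-\alpha}h(y)\,dy$ converges as an improper integral. Take $u_x\ge0$ to be a sum of smooth bumps of height $1/n$ and width $1/n$ centred at $2^n$: then $u$, $u_x$, $u_{xx}$ are bounded, $u_x\to0$, and $u=\int u_x$ is bounded because $\sum 1/n^2<\infty$; yet along $R$ with $x+R=2^n$ one has $R^{1-\alpha}|g(R)|\sim 2^{n(1-\alpha)}/n\to\infty$, while between bumps the boundary term is $0$. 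Since the left-hand side $\int_\varepsilon^R y^{-\alpha}g(y)\,dy$ does converge as $R\to\infty$ (your Dirichlet-test argument), the integration-by-parts identity then forces $\int_\varepsilon^R y^{1-\alpha}h(y)\,dy$ to oscillate unboundedly as well, so the programme ``show both improper integrals converge and pass to the limit'' cannot be carried out from the stated hypotheses alone; for such $u$ the right-hand side of \eqref{e:fraclapl2} is not even defined as a principal value. To finish the proof you need a quantitative decay assumption, e.g. $|x|^{1-\alpha}u_x(x)\to0$ as $|x|\to\pm\infty$ (satisfied by every function the paper actually uses), under which the boundary term at $R$ vanishes and the convergence of $\int_0^\infty y^{1-\alpha}h(y)\,dy$ then follows \emph{from} the identity rather than being assumed; alternatively, one must state explicitly how the conditionally divergent right-hand integral is to be regularized. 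As written, your sketch proves the lemma for $\alpha\in[1,2)$ but only reduces the case $\alpha\in(0,1)$ to an unproven (and, in the stated generality, false) convergence claim.
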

In this paper, we work with \eqref{e:fraclapl2}, rather than with \eqref{e:fraclapl}. Note that, in this form, it is obvious that, when $\alpha = 1$, the fractional Laplacian is just the Hilbert transform of the derivative of $u(x)$, i.e., $(-\Delta)^{1/2}u(x) \equiv \mathcal H(u_x(x))$. Since the numerical computation of the Hilbert transform has long been studied (see, e.g., \cite{weideman1995}), we have limited ourselves to considering the case $\alpha\not=1$ in this paper.

One of the main difficulties in the computation of the fractional Laplacian is dealing with the unboundedness of $\mathbb R$. In this paper, we follow the approach in \cite{delahozcuesta2016,cayamacuestadelahoz2020,cayamacuestadelahoz2021}, i.e., we map $\mathbb R$ into a bounded interval, by means of an algebraic map (see \cite{Boyd1987}):
\begin{equation*}
	\xi = \frac{x}{\sqrt{L^2 + x^2}}\in[-1,1] \Longleftrightarrow x = \frac{L\xi}{\sqrt{1 - \xi^2}}\in\mathbb R,
\end{equation*}
with $L > 0$. Then, we consider another change of variable:
\begin{equation}
\label{e:xxis}
x = L\cot(s)\in\mathbb R\Longleftrightarrow \xi = \cos(s)\in[-1,1] \Longleftrightarrow s = \arccos(\xi) \in[0,\pi],
\end{equation}
so we need to express \eqref{e:fraclapl2} in terms of $s\in[0,\pi]$, for which we use $dx = -L\sin^{-2}(s)ds$, together with the following identities (see \cite{Boyd1987}):
\begin{equation*}
u_{x}(x) = - \frac{\sin^{2}(s)}{L}u_{s}(s), \qquad
u_{xx}(x) = \frac{\sin^{4}(s)}{L^{2}}u_{ss}(s) + \frac{2\sin^{3}(s)\cos(s)}{L^{2}}u_{s}(s),
\end{equation*}
where, with some abuse of notation, $u(s) \equiv u(x(s))$. Then, \eqref{e:fraclapl2} becomes (see \cite{cayamacuestadelahoz2021}):
\begin{equation}
\label{e:fraclap0pi2}
(-\Delta)^{\alpha/2}u(s)=\left\{
\begin{aligned}
	& \frac{\sin(s)}{L\pi}\int_{0}^\pi \frac{\sin(\eta)u_s(\eta)}{\sin(s - \eta)}d\eta, & & \text{if $\alpha = 1$}, \\
	& \frac{c_{\alpha}|\sin(s)|^{\alpha-1}}{L^\alpha\alpha(1-\alpha)} \\
	& \quad\cdot\int_0^\pi \frac{\sin^{\alpha}(\eta)(\sin(\eta)u_{ss}(\eta) + 2\cos(\eta)u_{s}(\eta))}{|\sin(s-\eta)|^{\alpha-1}}d\eta, & & \text{if $\alpha \neq 1$},
\end{aligned}
\right.
\end{equation}
where, again abusing notation, we use $(-\Delta)^{\alpha/2}$ for the resulting operator in the variable $s$. The structure of \eqref{e:fraclap0pi2} strongly suggests considering a Fourier series expansion of $u(s)$:
$$
u(s) = \sum_{k = -\infty}^\infty\hat u(k)e^{iks},
$$
and precisely this approach was followed in \cite{cayamacuestadelahoz2021}, where a pseudospectral method was proposed to approximate numerically \eqref{e:fraclap0pi2}, for which a fast and accurate numerical approximation of $(-\Delta)^{\alpha/2}e^{ins}$, with  $n\in\mathbb Z$, was necessary. On the other hand, in \cite{cayamacuestadelahoz2020}, $(-\Delta)^{\alpha/2}e^{i2ns}$ was expressed in terms of the hypergeometric function ${}_2F_1$. In fact, there are more results in this direction, like in \cite{Sheng2020}, where the $n$-dimensional fractional Laplacian of different sets of functions was expressed in terms of hypergeometric functions.

However, in the current paper, we follow a different approach, namely, we consider a second-order modification of the midpoint rule applied to \eqref{e:fraclap0pi2} that, combined with a fast Fourier transform (FFT) based convolution algorithm (see, e.g., \cite{Garcia-Cervera2007}), yields a fast method that allows considering very large numbers of points without the need of evaluating hypergeometric functions. Indeed, some of the commercially available implementations of the hypergeometric functions can be numerically inefficient, and, in some cases even inaccurate (see \cite{cayamacuestadelahoz2020} for a discussion on this, and how the use of variable precision arithmetic may be required).

There is a wide range of publications related to the effectiveness of different methods to solve numerically nonlocal fractional operators. For instance, there are schemes using a finite-difference approximation, such as \cite{TianDu2013,Gao2014,DuoWykZhang2018}, and \cite{HuangOberman2014,HuangOberman2016,MindenYing2018,DuoZhang2019}; the latter schemes are based on singularity subtraction and finite-difference approximation by applying a quadrature rule in a bounded domain. There are also algorithms based on the Caffarelli-Silvestre extension \cite{CaffarelliSilvestre2007,HuLiLi2017,Nochetto2015}, followed by the application of spectral approaches \cite{AcostaBorthagaray2016,MaoKarniadakis2018,ZayernouriKarniadakis2014}. A more recent publication is, for instance, \cite{ChenShen2020}, where a fast spectral Galerkin method using the generalized Laguerre functions for the extension problem is applied. A description of some of these techniques can be found in the recent review article \cite{DElia:2020}.

All these publications have in common the use of truncation in the integration domain (which leads to a natural deterioration of the rate of convergence), or that they solve problems on a bounded domain where different definitions of the fractional Laplacian are not equivalent (see, e.g., the introduction of \cite{bonfortevazquez2016}  and the references therein); therefore, in order to make a fair comparison of the method in this paper, we must do it with those methods that do not truncate the domain. In this regard, with respect to \cite{cayamacuestadelahoz2021}, the method presented here is capable of working with much larger amounts of points (and it does it very efficiently); and with respect to \cite{cayamacuestadelahoz2020}, which relies on the expression of the fractional Laplacian in terms of bases of functions defined on $\mathbb{R}$ using hypergeometric functions (see also \cite{Sheng2020}), the current approach avoids the evaluation of those functions, which is numerically expensive and potentially inaccurate.

This article is organized as follows: In Section \ref{s:quadratureformula}, we introduce the quadrature formula used in this paper, and prove rigorously its order of convergence. This might seem not necessary, given that there are classical results regarding standard quadrature methods for singular integrals (see, e.g., \cite{Lyness1967,waterman64,navot61,kapur.rokhlin:1997}), but we remark that none of these apply directly to the particular quadrature presented here. It might be possible to get the same results by adapting the general method presented in, e.g., \cite{Lyness1967} to our quadrature, but this computation seems more cumbersome than getting the error estimates directly, and thus, we opted for the latter. In Section \ref{s:secondorder}, based on that quadrature formula, we develop a second-order method to approximate numerically the fractional Laplacian for $\alpha\not=1$. In fact, we approximate the more general singular integral:
\begin{equation}
	\label{e:Is0pi}
	I(s) = \int_0^\pi \sin^\beta(\eta)|\sin(\eta - s)|^\gamma f(\eta)d\eta,
\end{equation}
where $\beta > 0$, $\gamma > -1$. The main result in this section is Theorem \ref{theo:int0pibg}, in which we show that the proposed method approximates integral \eqref{e:Is0pi} to second order accuracy. In Section \ref{s:numericalconvo}, we explain the fast convolution algorithm (that we will refer to as fast convolution) applied to \eqref{e:Is0pi}, which yields an efficient evaluation of the numerical quadrature used to  approximate the fractional Laplacian. In Section \ref{s:implementation}, we detail how to implement the method in Matlab \cite{matlab}. In Section \ref{s:numerical}, we carry out the numerical experiments; in particular, we simulate the fractional cubic nonlinear Schr\"odinger equation in the focusing case.

All the simulations have been run in an Apple MacBook Pro (13-inch, 2020, 2.3 GHz Quad-Core Intel Core i7, 32 GB).

\section{A modified midpoint rule}

\label{s:quadratureformula}

Let $f\in\mathcal C^2[a, b]$, $N\in\mathbb{N}$, $h = (b-a) / N$, and denote $x_n = a + hn$, $x_{n+1/2} = a + h(n + 1/2)$ and $x_{n+1} = a + h$ (in general, in this paper, if $g(n)$ is an expression depending on $n$, we denote $x_{g(n)} \equiv a + g(n)h$, or, when $a = 0$,  $x_{g(n)} \equiv g(n)h$); then, the well-known midpoint rule satisfies (see, e.g., \cite{Quarteroni2007}):
\begin{equation}
	\label{e:midpointxnxn1}
	\int_{x_n}^{x_{n+1}} f(x)dx - hf(x_{n+1/2}) = \frac{(x_{n+1} - x_n)^3}{24}f''(\xi_n), \quad \xi_n\in(x_n, x_{n+1}),
\end{equation}
and, hence,
\begin{equation}
	\label{e:midpoint}
	\int_{a}^{b} f(x)dx - h\sum_{n = 0}^{N-1}f(x_{n+1/2}) = \frac{(b-a)^3}{24N^2}f''(\xi), \quad \xi\in(a, b).
\end{equation}
On the other hand, if $f\not\in\mathcal C^2[a, b]$, \eqref{e:midpoint} no longer holds. In particular, the study of the behavior of the midpoint rule and the trapezoidal rule to approximate numerically the integral of $x^\beta f(x)$ over intervals of the form $[0, b]$ has long been studied (see, e.g., \cite{Lyness1967}, for a very detailed exposition); for instance, if $f(0)\not=0$, and $\beta \in (-1, 0)\cup(0,1)$, there exists $K > 0$, such that
\begin{equation}
\label{e:int0bxbf}
\left| \int_{0}^{b} x^\beta f(x)dx  - h\sum_{n = 0}^{N-1}x_{n+1/2}^\beta f(x_{n+1/2})\right| \le \frac{K}{N^{\beta + 1}}, 
\end{equation}
where $h = b / N$, $x_{n+1/2} = (n+1/2)h$. Note that we are replacing here the integrand $x^\beta f(x)$ on each subinterval $[x_n, x_{n+1}]$ in \eqref{e:int0bxbf},  by its value at the middle point of that interval, i.e., $x_{n+1/2}^\beta f(x_{n+1/2})$:
$$
\int_{x_n}^{x_{n+1}}x^\beta f(x)dx \approx \int_{x_n}^{x_{n+1}}x_{n+1/2}^\beta f(x_{n+1/2})dx = hx_{n+1/2}^\beta f(x_{n+1/2}).
$$
However, another option is to evaluate only $f(x)$ at $x = x_{n+1/2}$, and integrate exactly $x^\beta$ over $[x_n, x_{n+1}]$ (when $x_n = 0$, $\beta > -1$ must be imposed):
$$
\int_{x_n}^{x_{n+1}}x^\beta f(x)dx \approx \int_{x_n}^{x_{n+1}}x^\beta f(x_{n+1/2})dx = f(x_{n+1/2})\frac{x_{n+1}^{\beta + 1} - x_n^{\beta + 1}}{\beta + 1}.
$$
In fact, it is possible to consider both positive and negative values of $x_{n}$ and $x_{n+1}$:
\begin{equation}
\label{e:partialmidpoint}
\int_{x_{n}}^{x_{n+1}} |x|^\beta f(x)dx \approx \frac{\sgn(x_{n+1})|x_{n+1}|^{\beta+1} - \sgn(x_n)|x_n|^{\beta+1}}{\beta + 1}f(x_{n+1/2}),
\end{equation}
where $x_{n+1/2} = (x_n + x_{n+1}) / 2$, and if $0\in[x_n, x_{n+1}]$, $\beta > -1$ is required. 

The idea of evaluating just a part of a singular integrand at its middle point, and integrating exactly the remaining expression, is not new (see, e.g., \cite{Garcia-Cervera2007}), but, to the best of our knowledge, it has not been used to approximate numerically the fractional Laplacian. In our case, the reason why \eqref{e:partialmidpoint} is preferable to the standard midpoint rule is because, unlike the latter, the former integrates exactly the singularity; moreover, the quadrature can be written as a discrete convolution, which can be evaluated efficiently with the FFT. Recall that the midpoint rule, the trapezoidal rule and other quadrature rules lose accuracy in the presence of singularities, so by integrating the singularities exactly, there is an improvement with respect to \eqref{e:int0bxbf}, as we show in the following result.
\begin{theorem} \label{theo:intab}
Let $f\in\mathcal C^2[a, b]$, where $0 \le a < b$, and let $\beta \not= -1$ (additionally, $\beta > -1$ is required when $a = 0$), $N \in\mathbb N$, $h = (b - a) / N$, $x_n = a + hn$, etc. Define
\begin{equation}
\label{e:Eint0ab}
E = \int_{a}^{b} x^\beta f(x)dx - \sum_{n = 0}^{N-1}\frac{x_{n+1}^{\beta+1} - x_n^{\beta+1}}{\beta + 1}f(x_{n+1/2}).
\end{equation}
Then, there exists $K > 0$, such that
\begin{equation}
\label{e:int0ab}
|E| \le
\left\{
\begin{aligned}
	& \frac{K}{N^2}, & & \text{if $a > 0$},
	\\
	& \frac{K}{N^2}, & & \text{if $a = 0$ and $\beta \ge 0$},
	\\
	& \frac{K}{N^2}, & & \text{if $a = 0$ and $\beta \in (-1, 0)$ and $f'(0) = 0$},
	\\
	& \frac{K}{N^{2+\beta}}, & & \text{if $a = 0$ and $\beta \in (-1, 0)$ and $f'(0) \not= 0$}.
\end{aligned}
\right.
\end{equation}
\end{theorem}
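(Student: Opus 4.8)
The plan is to localize the error to the subintervals, peel off the first-order Taylor polynomial of $f$ at each midpoint, and exploit the near-evenness that remains. Since $\beta\neq-1$ we have $\int_{x_n}^{x_{n+1}}x^\beta\,dx=(x_{n+1}^{\beta+1}-x_n^{\beta+1})/(\beta+1)$, so that
\[
E=\sum_{n=0}^{N-1}\int_{x_n}^{x_{n+1}}x^\beta\bigl(f(x)-f(x_{n+1/2})\bigr)\,dx .
\]
Writing $f(x)-f(x_{n+1/2})=f'(x_{n+1/2})(x-x_{n+1/2})+\int_{x_{n+1/2}}^{x}(x-t)f''(t)\,dt$ splits $E=A+B$, with $A=\sum_{n=0}^{N-1}f'(x_{n+1/2})J_n$, $J_n:=\int_{x_n}^{x_{n+1}}x^\beta(x-x_{n+1/2})\,dx$, and $B$ the sum of the integral remainders. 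Since $f\in\mathcal C^2[a,b]$ on a compact interval, $\|f'\|_\infty,\|f''\|_\infty<\infty$; using $|x-x_{n+1/2}|\le h/2$ and $\bigl|\int_{x_{n+1/2}}^{x}(x-t)f''(t)\,dt\bigr|\le\tfrac{\|f''\|_\infty}{2}(x-x_{n+1/2})^2$ gives $|B|\le\frac{\|f''\|_\infty h^2}{8}\int_a^b x^\beta\,dx$, and this integral is finite because $\beta>-1$ whenever $a=0$; hence $|B|=O(h^2)=O(N^{-2})$ in \emph{every} case. The whole theorem therefore reduces to estimating $A$, i.e., to quantifying how far $x^\beta$ is from being even about $x_{n+1/2}$.

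For that, substitute $x=x_{n+1/2}+t$ and use $\int_{-h/2}^{h/2}t\,dt=0$ to get $J_n=\int_{-h/2}^{h/2}\bigl((x_{n+1/2}+t)^\beta-x_{n+1/2}^\beta\bigr)t\,dt$, whence, whenever $[x_n,x_{n+1}]\subset(0,\infty)$,
\[
|J_n|\le|\beta|\,\Bigl(\max_{|s|\le h/2}(x_{n+1/2}+s)^{\beta-1}\Bigr)\frac{h^3}{12},
\]
while for the first subinterval when $a=0$ we use instead the crude bound $|J_0|\le\tfrac h2\int_0^h x^\beta\,dx=\tfrac{h^{\beta+2}}{2(\beta+1)}$. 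If $a>0$, the maximum above is $\le\max(a^{\beta-1},b^{\beta-1})$ uniformly in $n$, so $|J_n|\le Ch^3$ and $|A|\le\|f'\|_\infty N C h^3=O(h^2)$: the first case. If $a=0$ and $\beta\ge0$ (we may take $\beta>0$, since $\beta=0$ gives $J_n\equiv0$), then for $n\ge1$ we have $x_{n+1/2}+s\ge x_n=nh$, so $|J_n|\le C(nh)^{\beta-1}h^3$; bounding $|f'|\le\|f'\|_\infty$, using $\sum_{n=1}^{N-1}n^{\beta-1}\le CN^{\beta}$ and $h=b/N$, the bulk contributes $\le Ch^{\beta+2}N^\beta=Cb^{\beta+2}N^{-2}$, and the $J_0$ term $\le Ch^{\beta+2}=O(N^{-2})$ (as $\beta+2\ge2$); hence $|E|=O(N^{-2})$, the second case.

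The remaining two cases have $a=0$ and $\beta\in(-1,0)$, so $\beta-1<-1$. For $n\ge1$ we again get $|J_n|\le C(nh)^{\beta-1}h^3$, but now $\sum_{n\ge1}n^{\beta-1}$ converges; bounding $|f'|\le\|f'\|_\infty$ and adding the $J_0$ term yields $|A|\le Ch^{\beta+2}$, hence $|E|\le Ch^{\beta+2}=O(N^{-(2+\beta)})$, with no assumption on $f'(0)$: the fourth case. If moreover $f'(0)=0$, we replace $|f'(x_{n+1/2})|\le\|f'\|_\infty$ by the sharper $|f'(x_{n+1/2})|=\bigl|\int_0^{x_{n+1/2}}f''\bigr|\le\|f''\|_\infty x_{n+1/2}\le2\|f''\|_\infty nh$ for $n\ge1$, and $|f'(x_{1/2})|\le\|f''\|_\infty h/2$. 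The bulk then becomes $\le Ch^3\sum_{n\ge1}(nh)^\beta=Ch^{\beta+3}\sum_{n=1}^{N-1}n^\beta\le Ch^{\beta+3}N^{\beta+1}=Cb^{\beta+3}N^{-2}$, and the $n=0$ term $\le Ch^{\beta+3}=O(N^{-2})$ (as $\beta+3>2$); so $|A|=O(N^{-2})$ and $|E|=O(N^{-2})$, the third case.

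The only genuinely delicate part is the exponent bookkeeping in the last step: one must keep track of the two competing small quantities in $A$ — the power $h^{\beta+2}$ (resp.\ $h^{\beta+3}$) coming from the leftmost subinterval and $h^3$ times a weighted sum coming from the bulk — verify that all constants are uniform in $n$, and check that the extra factor $x_{n+1/2}$ gained when $f'(0)=0$ is exactly what converts the divergent sum $\sum_{n=1}^{N-1}n^\beta\sim N^{\beta+1}/(\beta+1)$ back into an $O(N^{-2})$ bound once $h=(b-a)/N$ is inserted. No individual estimate is hard, but this balancing is where the sharpness distinguishing the four cases (and, in particular, the appearance of the $f'(0)$ dichotomy) is decided.
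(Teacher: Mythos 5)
Your proof is correct and follows essentially the same strategy as the paper's: decompose $E$ over the subintervals, Taylor-expand $f$ at the midpoints so that the second-order remainder is $O(h^2)$ in all cases, bound the first-order moment $J_n=\int_{x_n}^{x_{n+1}}x^\beta(x-x_{n+1/2})\,dx$ by $Ch^3x_{n+1/2}^{\beta-1}$ (with a separate crude bound on the leftmost cell when $a=0$), and, for the $\beta\in(-1,0)$ cases, trade $f'(x_{n+1/2})$ for $f'(0)+O(x_{n+1/2})$ to decide the $f'(0)$ dichotomy. The only difference is cosmetic: you obtain the $J_n$ estimate and the weighted sums by the mean value theorem and elementary comparisons of $\sum n^{\beta-1}$, $\sum n^{\beta}$, where the paper instead invokes its generalized-binomial-series lemmas (Lemmas 2--4), which it develops because they are reused later for the doubly singular quadrature.
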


In the rest of this section, we will prove Theorem \ref{theo:intab}, for which we need a number of auxiliary results.

\subsection{Some auxiliary results}

Suppose that $\beta\not\in \mathbb N \cup \{0\}$; then, according to the well-known Newton's generalized binomial theorem,
\begin{equation}
	\label{e:generbinom}
	(1 + x)^{\beta} = \sum_{n = 0}^{\infty}\binom{\beta}{n}x^n,
\end{equation}
where $0^0\equiv 1$. Note that, when $\beta\in \mathbb N \cup \{0\}$, we have the standard binomial theorem. We recall two important properties of the binomial coefficients, for $n\in\mathbb N$:
$$
\binom{\beta}{n} \equiv \frac\beta n\binom{\beta - 1}{n - 1}, \qquad \binom{\beta}{n} \equiv \binom{\beta-1}{n} + \binom{\beta-1}{n - 1}.
$$
On the other hand, in \eqref{e:generbinom}, it is immediate to check that the infinite sum is absolutely convergent for $|x| < 1$, and, if additionally $\beta > 0$, then it is also absolutely convergent when $|x| = 1$; this can be checked, e.g., by applying the well-known D'Alembert and Raabe criteria, respectively. In this paper, all the infinite sums of functions appearing are absolutely convergent for the considered parameters and values of $x$ (which can be checked by using those criteria or any other), and hence, their terms can be freely reordered, etc. In particular, we will use the following consequence of Newton's generalized binomial theorem:
	\begin{align}
		\label{e:1xb-1-xb}
		\frac{(1 + x)^{\beta} - (1 - x)^{\beta}}\beta & = \frac{1}{\beta}\left[\sum_{n = 0}^{\infty}\binom{\beta}{n}x^n - \sum_{n = 0}^{\infty}\binom{\beta}{n}(-1)^nx^n\right]
= \frac{2}{\beta}\sum_{n = 0}^{\infty}\binom{\beta}{2n+1}x^{2n+1} \cr
& = \sum_{n = 0}^{\infty}\frac{2}{2n+1}\binom{\beta-1}{2n}x^{2n+1}.
	\end{align}
We also need the following property of the generalized binomial coefficients.
\begin{lemma} \label{lemma:genbincoef}
	Let $m,l\in \mathbb N \cup \{0\}$, then
	\begin{equation}
		\label{e:genbincoef}
		\sum_{k = 0}^{m}(-1)^{m-k}\binom{m}{k}\binom{\beta + k}{l} = 
		\left\{\begin{aligned}
			& \binom{\beta}{l - m}, & & \text{if $l \ge m$},
			\cr
			& 0, & & \text{if $l < m$}.
		\end{aligned}
		\right.
	\end{equation}
\end{lemma}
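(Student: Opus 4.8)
The plan is to read the left-hand side of \eqref{e:genbincoef} as the $m$-th forward difference, in the parameter $\beta$, of the map $\beta\mapsto\binom{\beta}{l}$, and to evaluate it with a generating-function computation. For $|x|<1$, Newton's generalized binomial theorem \eqref{e:generbinom} gives the absolutely convergent expansion $\sum_{l=0}^{\infty}\binom{\beta+k}{l}x^{l}=(1+x)^{\beta+k}$ for each fixed $k$. Multiplying by $(-1)^{m-k}\binom{m}{k}$, summing over $0\le k\le m$, and interchanging the (finitely many, absolutely convergent) sums — which is legitimate for $|x|<1$ by the remarks preceding the lemma — I obtain
\[
\sum_{l=0}^{\infty}\left(\sum_{k=0}^{m}(-1)^{m-k}\binom{m}{k}\binom{\beta+k}{l}\right)x^{l}
=(1+x)^{\beta}\sum_{k=0}^{m}(-1)^{m-k}\binom{m}{k}(1+x)^{k}
=(1+x)^{\beta}x^{m},
\]
where the last step is the ordinary binomial theorem applied to $\bigl((1+x)+(-1)\bigr)^{m}$.

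Next I would expand the right-hand side once more with \eqref{e:generbinom}: $(1+x)^{\beta}x^{m}=\sum_{j=0}^{\infty}\binom{\beta}{j}x^{j+m}=\sum_{l=m}^{\infty}\binom{\beta}{l-m}x^{l}$, again valid for $|x|<1$. Comparing the coefficients of $x^{l}$ on the two sides of the identity above yields exactly $\sum_{k=0}^{m}(-1)^{m-k}\binom{m}{k}\binom{\beta+k}{l}=\binom{\beta}{l-m}$ when $l\ge m$, and $0$ when $l<m$ (since then $x^{l}$ does not occur in $(1+x)^{\beta}x^{m}$), which is precisely \eqref{e:genbincoef}.

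The only delicate point is the interchange of the two summations, i.e.\ a Fubini argument for the double series $\sum_{l}\sum_{k}$; but, as noted just before the statement, the binomial series converges absolutely for $|x|<1$ and there are only finitely many values of $k$, so the rearrangement is harmless. As an alternative that sidesteps analysis entirely, one can argue by induction on $m$: the case $m=0$ is immediate, and the recurrence $\binom{m+1}{k}=\binom{m}{k}+\binom{m}{k-1}$, together with the index shift $k\mapsto k-1$, gives $S_{m+1}(\beta,l)=S_{m}(\beta+1,l)-S_{m}(\beta,l)$, where $S_{m}(\beta,l)$ denotes the left-hand side of \eqref{e:genbincoef}; applying the induction hypothesis and then the shifted Pascal rule $\binom{\beta+1}{n}=\binom{\beta}{n}+\binom{\beta}{n-1}$ (the identity stated above the lemma with $\beta$ replaced by $\beta+1$) closes the induction, the degenerate case $l=m$ being covered because $\binom{\beta+1}{0}=\binom{\beta}{0}=1$. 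I expect the only mildly error-prone part to be the bookkeeping of the index shifts and of the boundary cases $l=m$ and $l<m$; there is no genuine conceptual obstacle.
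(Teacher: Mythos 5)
Your proposal is correct. Your primary argument — the generating-function computation — is a genuinely different route from the paper's: the paper proves the lemma by induction on $m$, splitting $\binom{m+1}{k}=\binom{m}{k-1}+\binom{m}{k}$, shifting the index to recognize $S_{m+1}(\beta,l)=S_m(\beta+1,l)-S_m(\beta,l)$, and then applying the induction hypothesis together with the Pascal-type rule $\binom{\beta+1}{n}=\binom{\beta}{n}+\binom{\beta}{n-1}$, treating the boundary cases $l\ge m+1$, $l=m$ and $l<m$ separately — which is word for word the ``alternative'' induction you sketch in your last paragraph, including the observation that $\binom{\beta+1}{0}=\binom{\beta}{0}=1$ handles the degenerate case. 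Your main route instead identifies the left-hand side as the coefficient of $x^l$ in $(1+x)^{\beta}\bigl((1+x)-1\bigr)^m=(1+x)^{\beta}x^m$ and reads off $\binom{\beta}{l-m}$ (or $0$ for $l<m$) by uniqueness of power-series coefficients; the interchange of the finite sum over $k$ with the absolutely convergent series in $l$ is indeed harmless for $|x|<1$. The generating-function proof buys brevity and makes the combinatorial identity transparent at a glance (it is visibly the $m$-th finite difference of $\beta\mapsto(1+x)^{\beta}$), at the cost of invoking convergence and coefficient-uniqueness for the binomial series; the paper's induction is more elementary and purely algebraic, which fits its self-contained style, at the cost of some index bookkeeping. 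Either proof is acceptable.
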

\begin{proof} We use an induction argument. When $m = 0$, \eqref{e:genbincoef} is trivially true:
	$$
	\sum_{k = 0}^{0}(-1)^{0-k}\binom{0}{k}\binom{\beta + k}{l} = \binom{\beta}{l}.
	$$
	Suppose now that \eqref{e:genbincoef} holds until a certain $m\in\mathbb N$. We will prove it for $m = m + 1$:
	\begin{align*}
		& \sum_{k = 0}^{m+1}(-1)^{m+1-k}\binom{m+1}{k}\binom{\beta + k}{l} = \sum_{k = 1}^{m+1}(-1)^{m+1-k}\binom{m}{k-1}\binom{\beta + k}{l}
		\cr
		& \quad \qquad + \sum_{k = 0}^{m}(-1)^{m+1-k}\binom{m}{k}\binom{\beta + k}{l}
		\cr
		& \quad = \sum_{k = 0}^{m}(-1)^{m-k}\binom{m}{k}\binom{(\beta + 1) + k}{l} - \sum_{k = 0}^{m}(-1)^{m-k}\binom{m}{k}\binom{\beta + k}{l}
		\cr
		& \quad =
		\left\{
		\begin{aligned}
			&\binom{\beta + 1}{l - m} - \binom{\beta}{l - m} = \binom{\beta}{l - (m + 1)}, & & \text{if $l \ge m + 1$},
			\cr
			&\binom{\beta + 1}{l - m} - \binom{\beta}{l - m} = \binom{\beta + 1}{0} - \binom{\beta}{0} = 1 - 1 = 0, & & \text{if $l = m$},
			\cr
			&0 - 0 = 0, & & \text{if $l < m$},
		\end{aligned}
		\right.
	\end{align*}
	where we have divided the case $l < m + 1$ in two cases, $l = m$ and $l < m$. \qed
\end{proof}

At its turn, Lemma \ref{lemma:genbincoef} is needed to prove the following result.
\begin{lemma} \label{lemma:intann1xbfx}
	Let $\beta \not= -1$, $m\in\mathbb N\cup\{0\}$, $a \ge 0$, $h > 0$, $n\in\mathbb N\cup\{0\}$ (additionally, $\beta > -1$ is required, when both $a = 0$ and $n = 0$), $x_{n} = a + nh$, etc., then
	\begin{align}
		\label{e:intann1xbfx}
		\int_{x_n}^{x_{n+1}}x^\beta(x - x_{n+1/2})^mdx
 = \left\{
		\begin{aligned}
			& \sum_{l = 0}^\infty\frac{h^{2l + m + 1}}{2^{2l + m}(2l + m + 1)}\binom{\beta}{2l}x_{n+1/2}^{\beta - 2l}, & & m\equiv0\bmod2,
			\cr
			& \sum_{l = 0}^\infty\frac{h^{2l + m + 2}}{2^{2l + m + 1}(2l + m + 2)}\binom{\beta}{2l + 1}x_{n+1/2}^{\beta - 2l - 1}, & & m\equiv1\bmod2.
		\end{aligned}
		\right.
	\end{align}
	
\end{lemma}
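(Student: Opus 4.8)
The plan is to compute the left-hand side of \eqref{e:intann1xbfx} directly, using the substitution $x = x_{n+1/2} + t$ to center the interval at its midpoint, so that the range of integration becomes $t \in [-h/2, h/2]$, which is symmetric. Writing $x^\beta = x_{n+1/2}^\beta (1 + t/x_{n+1/2})^\beta$ and expanding by Newton's generalized binomial theorem \eqref{e:generbinom}, one gets
$$
\int_{x_n}^{x_{n+1}} x^\beta (x - x_{n+1/2})^m \, dx = x_{n+1/2}^\beta \sum_{j=0}^\infty \binom{\beta}{j} x_{n+1/2}^{-j} \int_{-h/2}^{h/2} t^{m+j} \, dt.
$$
The absolute convergence needed to interchange sum and integral has already been asserted in the paragraph preceding \eqref{e:1xb-1-xb} (here $|t/x_{n+1/2}| \le h/(2x_{n+1/2}) $, and one assumes, as is implicit, $h$ small enough relative to $a$, or one notes the edge case $a=0$, $n=0$ where $x_{n+1/2}=h/2$ makes the ratio exactly $1$ at the endpoint and $\beta>-1$ secures convergence there).

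Next I would evaluate $\int_{-h/2}^{h/2} t^{m+j}\, dt$, which vanishes when $m+j$ is odd and equals $\dfrac{2}{m+j+1}\left(\dfrac{h}{2}\right)^{m+j+1} = \dfrac{h^{m+j+1}}{2^{m+j}(m+j+1)}$ when $m+j$ is even. Thus only the terms with $j \equiv m \bmod 2$ survive. If $m$ is even, the surviving indices are $j = 2l$, $l \ge 0$, and substituting gives exactly
$$
\sum_{l=0}^\infty \frac{h^{2l+m+1}}{2^{2l+m}(2l+m+1)} \binom{\beta}{2l} x_{n+1/2}^{\beta-2l},
$$
which is the first branch. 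If $m$ is odd, the surviving indices are $j = 2l+1$, $l \ge 0$, and the same substitution (with $m+j+1 = 2l+m+2$) yields the second branch. This matches \eqref{e:intann1xbfx} term by term, so no further manipulation is required.

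I expect the main (and essentially only) obstacle to be bookkeeping: correctly tracking the powers of $h$, the powers of $2$, and the shifted denominators through the parity split, and making sure the convergence justification for the termwise integration is stated cleanly in the borderline case $a=0$, $n=0$. Notably, I would not actually need Lemma \ref{lemma:genbincoef} for this particular computation — the binomial-coefficient identity there is presumably invoked elsewhere (e.g. when summing these local contributions over $n$ in the proof of Theorem \ref{theo:intab}); the statement of Lemma \ref{lemma:intann1xbfx} itself follows from the single binomial expansion above together with the elementary integral of $t^{m+j}$ over a symmetric interval. If the authors' intent is instead to derive \eqref{e:intann1xbfx} from \eqref{e:1xb-1-xb} and Lemma \ref{lemma:genbincoef}, one could alternatively expand $x^\beta$ about the two endpoints and combine, but the midpoint substitution is more direct and I would present that.
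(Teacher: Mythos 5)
Your computation is correct and reaches \eqref{e:intann1xbfx} exactly, but by a genuinely different route from the paper. The paper expands the \emph{polynomial} factor $(x-x_{n+1/2})^m$ by the finite binomial theorem, integrates each power $x^{\beta+k}$ exactly, rewrites the antiderivatives as $(1\pm h/(2x_{n+1/2}))^{\beta+k+1}$, applies \eqref{e:1xb-1-xb}, swaps the order of summation, and then needs the combinatorial identity of Lemma \ref{lemma:genbincoef} to collapse the resulting inner sum $\sum_k(-1)^{m-k}\binom{m}{k}\binom{\beta+k}{2l}$. You instead expand the \emph{singular} factor $x^\beta$ about the midpoint and let the symmetry of $[-h/2,h/2]$ annihilate the odd powers, which produces the parity split immediately and needs no combinatorial identity at all. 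Your approach is shorter and more transparent; note that it would in fact render Lemma \ref{lemma:genbincoef} superfluous, since (contrary to your guess) that lemma is invoked in the paper only inside this proof and nowhere else. What the paper's route buys is that it works directly with the exact antiderivatives $(x_{n+1}^{\beta+k+1}-x_n^{\beta+k+1})/(\beta+k+1)$, mirroring the quadrature's philosophy of integrating the singularity exactly, but that is a stylistic rather than a logical advantage. Your handling of the borderline case $a=0$, $n=0$ is slightly glib but defensible: there $|\binom{\beta}{j}|\sim Cj^{-\beta-1}$, so the double series of absolute values behaves like $\sum_j j^{-\beta-2}$, which converges precisely when $\beta>-1$, justifying the termwise integration by Tonelli; this is the same level of care the paper itself takes with its blanket absolute-convergence remark.
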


\begin{proof}	
	In order to compute the integral, we expand $(x - x_{n+1/2})^m$ by means of Newton's binomial:
	\begin{align*}
		\int_{x_n}^{x_{n+1}} & x^\beta (x - x_{n+1/2})^mdx = \sum_{k = 0}^{m}\binom{m}{k}(-x_{n+1/2})^{m-k}\int_{x_n}^{x_{n+1}}x^{\beta + k}dx
		\cr
		& = \sum_{k = 0}^{m}\binom{m}{k}(-x_{n+1/2})^{m-k}\frac{x_{n+1}^{\beta + k + 1} - x_n^{\beta + k + 1}}{\beta + k + 1}
		\cr
		& = \sum_{k = 0}^{m}\binom{m}{k}(-x_{n+1/2})^{m-k}\frac{(x_{n+1/2} + h/2)^{\beta + k + 1} - (x_{n+1/2} - h/2)^{\beta + k + 1}}{\beta + k + 1}
		\cr
		& = x_{n+1/2}^{\beta + m + 1}\sum_{k = 0}^{m}(-1)^{m-k}\binom{m}{k}\frac{\left(1 + \dfrac{h}{2x_{n+1/2}}\right)^{\beta + k + 1} - \left(1 - \dfrac{h}{2x_{n+1/2}}\right)^{\beta + k + 1}}{\beta + k + 1}
		\cr
		& = 2x_{n+1/2}^{\beta + m + 1}\sum_{l = 0}^\infty\frac{1}{2l+1}\left[\sum_{k = 0}^{m}(-1)^{m-k}\binom{m}{k}\binom{\beta + k}{2l}\right]\left(\frac{h}{2x_{n+1/2}}\right)^{2l+1} = (*)
	\end{align*}
	where we have applied \eqref{e:1xb-1-xb}, and have changed the order of the summation signs. This can be done, because all the involved series are absolutely convergent for the range of parameters considered; indeed, when both $n = 0$ and $a = 0$ do not happen simultaneously, $h / (2x_{n+1/2}) < 1$, and when $n = 0$, $a = 0$, and $\beta > -1$, then $h / (2x_{n+1/2}) = 1$, but $\beta + k + 1 > 0$.
	
	Finally, we simplify the expression in brackets by using \eqref{e:genbincoef} in Lemma \ref{lemma:genbincoef}, and consider only those $l$, such that $2l\le m \Longleftrightarrow l \le \lceil m/2\rceil$:
	\begin{align*}
		(*) & = x_{n+1/2}^{\beta + m + 1}\sum_{l = \lceil m / 2\rceil}^\infty\frac{2}{2l+1}\binom{\beta}{2l - m}\left(\frac{h}{2x_{n+1/2}}\right)^{2l+1}
		\cr
		& = x_{n+1/2}^{\beta + m + 1}\sum_{l = 0}^\infty\frac{2}{2l+2\lceil m / 2\rceil + 1}\binom{\beta}{2l + 2\lceil m / 2\rceil - m}\left(\frac{h}{2x_{n+1/2}}\right)^{2l + 2\lceil m / 2\rceil + 1}
		\cr
		& = \sum_{l = 0}^\infty\frac{h^{2l + 2\lceil m / 2\rceil + 1}}{2^{2l + 2\lceil m / 2\rceil}(2l + 2\lceil m / 2\rceil + 1)}\binom{\beta}{2l + \bmod(m, 2)}x_{n+1/2}^{\beta -2l - \bmod(m, 2)},
	\end{align*}
	where $\bmod(m, 2)$ denotes the remainder of the division of $m$ by $2$.  Considering separately the cases with $m$ even and with $m$ odd, \eqref{e:intann1xbfx} follows. \qed
\end{proof}

\begin{corollary}
	Consider the same parameters as in Lemma \ref{lemma:intann1xbfx}, then
	\begin{equation}
		\label{e:boundKmhb}
		\left|\displaystyle{\int_{x_n}^{x_{n+1}}x^\beta(x - x_{n+1/2})^mdx}\right| \le
		\left\{
		\begin{aligned}
			& K_mh^{m + 1}x_{n+1/2}^\beta, & & m\equiv0\bmod2,
			\cr
			& K_mh^{m + 2}x_{n+1/2}^{\beta - 1}, & & m\equiv1\bmod2.
		\end{aligned}
		\right.
	\end{equation}
\end{corollary}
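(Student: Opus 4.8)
The plan is to read the bound directly off the explicit expansion \eqref{e:intann1xbfx} in Lemma \ref{lemma:intann1xbfx}. Consider first $m\equiv 0\bmod 2$. The $l=0$ summand of that expansion is $h^{m+1}x_{n+1/2}^{\beta}/(2^{m}(m+1))$, so factoring it out yields
\[
\int_{x_n}^{x_{n+1}}x^\beta(x-x_{n+1/2})^m\,dx = \frac{h^{m+1}x_{n+1/2}^{\beta}}{2^m}\sum_{l=0}^{\infty}\frac{1}{2l+m+1}\binom{\beta}{2l}\rho_n^{2l},\qquad \rho_n := \frac{h}{2x_{n+1/2}},
\]
and it suffices to bound the series $\sum_{l\ge 0}(2l+m+1)^{-1}\binom{\beta}{2l}\rho_n^{2l}$ in absolute value by a constant that does not depend on $n$ (it may depend on $\beta$, $m$, $a$ and $h$). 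For $m\equiv 1\bmod 2$ the $l=0$ term is $\beta h^{m+2}x_{n+1/2}^{\beta-1}/(2^{m+1}(m+2))$, and pulling out $h^{m+2}x_{n+1/2}^{\beta-1}/2^{m+1}$ leaves the series $\sum_{l\ge 0}(2l+m+2)^{-1}\binom{\beta}{2l+1}\rho_n^{2l}$; the argument is the same, so I describe only the even case.

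The point, already used in the proof of Lemma \ref{lemma:intann1xbfx}, is that $0\le\rho_n\le 1$ for every admissible $n$, with $\rho_n=1$ occurring only in the degenerate configuration $a=0$, $n=0$, in which case $\beta>-1$ is assumed. I would bound $\sum_{l\ge 0}(2l+m+1)^{-1}|\binom{\beta}{2l}|\rho_n^{2l}$ by splitting on the sign of $\beta$. If $\beta>0$, then $\sum_{l\ge 0}|\binom{\beta}{2l}|\le\sum_{k\ge 0}|\binom{\beta}{k}|<\infty$, since the generalized binomial series converges absolutely at $|x|=1$; together with $\rho_n\le 1$ and $(2l+m+1)^{-1}\le(m+1)^{-1}$ this bounds the series. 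If $\beta<-1$, the degenerate configuration is ruled out, so $\rho^*:=\sup_n\rho_n<1$ and $\sum_{l\ge 0}|\binom{\beta}{2l}|(\rho^*)^{2l}<\infty$ because the binomial series has radius of convergence $1$; again the series is bounded.

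The delicate case is $\beta\in(-1,0)$, where $\rho_n$ may reach $1$ while $\sum_l|\binom{\beta}{2l}|$ diverges, so the factor $(2l+m+1)^{-1}$ must be retained. Here I would use that for $\beta\in(-1,0)$ each $\binom{\beta}{2l}$ is positive (its numerator is a product of an even number of negative factors), so that by $\rho_n\le 1$, the substitution $(2l+m+1)^{-1}=\int_0^1 t^{2l+m}\,dt$, Tonelli's theorem and the identity $\sum_{l\ge 0}\binom{\beta}{2l}t^{2l}=\tfrac12[(1+t)^\beta+(1-t)^\beta]$,
\[
\sum_{l=0}^{\infty}\frac{\binom{\beta}{2l}}{2l+m+1}\rho_n^{2l}\ \le\ \sum_{l=0}^{\infty}\frac{\binom{\beta}{2l}}{2l+m+1}\ =\ \int_0^1 t^m\,\frac{(1+t)^\beta+(1-t)^\beta}{2}\,dt\ \le\ \frac12\Bigl(1+\frac{1}{\beta+1}\Bigr)<\infty,
\]
the last bound using $(1+t)^\beta\le 1$, $t^m\le 1$, and $\int_0^1(1-t)^\beta\,dt=(\beta+1)^{-1}$, which is finite precisely because $\beta>-1$. (Alternatively one may invoke the asymptotics $|\binom{\beta}{2l}|=O(l^{-\beta-1})$.) For the odd case one argues identically using $\binom{\beta}{2l+1}<0$ and the odd part $\sum_l\binom{\beta}{2l+1}t^{2l+1}=\tfrac12[(1+t)^\beta-(1-t)^\beta]$. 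Taking $K_m$ to be the maximum of the finitely many constants obtained over the three ranges of $\beta$ finishes the proof.

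I expect the only genuine obstacle to be this last integrability estimate for $\beta\in(-1,0)$ at $\rho_n=1$: it is the sole place where the hypothesis $\beta>-1$ in the degenerate case is actually needed, and it is precisely where a naive $\sum_l|\binom{\beta}{2l}|$ bound fails. Everything else is routine manipulation of the explicit series supplied by Lemma \ref{lemma:intann1xbfx}.
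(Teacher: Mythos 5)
Your proposal is correct and follows essentially the same route as the paper: both read the bound off the explicit series of Lemma \ref{lemma:intann1xbfx}, factor out $h^{m+1}x_{n+1/2}^\beta$ (resp.\ $h^{m+2}x_{n+1/2}^{\beta-1}$), and bound the remaining series uniformly in $n$ using $h/(2x_{n+1/2})\le 1$ with equality only when $a=0$, $n=0$, $\beta>-1$. The only difference is that where the paper simply invokes convergence of the series at ratio $1$ (via its blanket appeal to Raabe's criterion), you supply an explicit and correct justification for the borderline case $\beta\in(-1,0)$ through the sign of $\binom{\beta}{2l}$ and the integral representation of $(2l+m+1)^{-1}$, which is a welcome but not essentially different elaboration.
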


\begin{proof}
	\begin{align*}
		& \frac{\left|\displaystyle{\int_{x_n}^{x_{n+1}}x^\beta(x - x_{n+1/2})^mdx}\right|}{\displaystyle{h^{2\lceil m / 2\rceil + 1}}x_{n+1/2}^{\beta - \bmod(m, 2)}}
		\cr
		& \quad = \left|\sum_{l = 0}^\infty\frac{1}{2^{2\lceil m / 2\rceil}(2l + 2\lceil m / 2\rceil + 1)}\binom{\beta}{2l + \bmod(m, 2)}\left(\frac{h/2}{a + h(n + 1/2)}\right)^{2l}\right|
		\cr
		& \quad \le \sum_{l = 0}^\infty\frac{1}{2^{2\lceil m / 2\rceil}(2l + 2\lceil m / 2\rceil + 1)}\left|\binom{\beta}{2l + \bmod(m, 2)}\right|\left(\frac{1}{1 + 2a/h}\right)^{2l} \le K_m,
	\end{align*}
	where we have used that the series is convergent, because $(1 / (1 + 2a/h)) \le 1$ (and in the case $(1 / (1 + 2a/h)) = 1$, which happens only when $a = 0$, we are also assuming that $\beta > -1$). Considering separately the cases with $m$ even and with $m$ odd, \eqref{e:boundKmhb} follows. \qed
\end{proof}

\begin{remark} 	The fact that \eqref{e:boundKmhb} behaves like $\mathcal O(h^{m+1})$, when $m$ is even, whereas it behaves like $\mathcal O(h^{m+2})$, when $m$ is odd, happens because there is a change of sign in $(x - x_{n+1})^m$, when $m$ is odd, but not when $m$ is even. It is also important to point out that $K_m$ is independent of $n$, although it obviously depends of $\beta$, $h$ and $a$.
\end{remark}

We also need a result on the behavior of the sums of $\{x_{n+1/2}\}$.
\begin{lemma} \label{lemma:boundmean}
	Let $0 \le a < b$, $\gamma\in\mathbb R$, $N \in\mathbb N$, $h = (b - a) / N$, $x_n = a + hn$, then, there is a constant $K_\gamma\in(0, \infty)$, such that
	\begin{equation}
		\label{e:boundmean}
		\left|h\sum_{n = 0}^{N-1}x_{n+1/2}^\gamma\right| <
		\begin{cases}
			K_\gamma h^{\gamma+1}, & \text{if $a = 0$ and $\gamma < -1$},
			\cr
			K_\gamma - \ln(h/2), & \text{if $a = 0$ and $\gamma = -1$},
			\cr
			K_\gamma, & \text{otherwise}.
		\end{cases} 
	\end{equation}
\end{lemma}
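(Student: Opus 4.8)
The plan is to separate the easy case $a>0$, where $x^\gamma$ stays bounded on $[a,b]$, from the case $a=0$, where the singularity at the origin controls the size of the sum; in the latter case everything reduces to estimates for the model sum $S_N=\sum_{n=0}^{N-1}(n+1/2)^\gamma$, since then $x_{n+1/2}=(n+1/2)h$ and hence $h\sum_{n=0}^{N-1}x_{n+1/2}^\gamma=h^{\gamma+1}S_N$. When $a>0$ there is nothing to do: $a\le x_{n+1/2}\le b$ for every $n$, so $0<x_{n+1/2}^\gamma\le\max(a^\gamma,b^\gamma)$ uniformly in $n$, and $h\sum_{n=0}^{N-1}x_{n+1/2}^\gamma\le hN\max(a^\gamma,b^\gamma)=(b-a)\max(a^\gamma,b^\gamma)=:K_\gamma$, which is the ``otherwise'' bound.

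Assume now $a=0$, so $h=b/N$. If $\gamma<-1$, the series $\sum_{n\ge 0}(n+1/2)^\gamma$ converges (it is dominated by a convergent $p$-series), so $S_N$ is strictly less than $K_\gamma:=\sum_{n\ge 0}(n+1/2)^\gamma\in(0,\infty)$, giving $h\sum_{n=0}^{N-1}x_{n+1/2}^\gamma=h^{\gamma+1}S_N<K_\gamma h^{\gamma+1}$. If $\gamma>-1$, the sum is bounded independently of $N$: for $\gamma\ge 0$ one simply uses $x_{n+1/2}\le b$ to get $h\sum_{n=0}^{N-1}x_{n+1/2}^\gamma\le hNb^\gamma=b^{\gamma+1}$; for $\gamma\in(-1,0)$, where $t\mapsto t^\gamma$ is convex, Jensen's inequality yields $h\,x_{n+1/2}^\gamma\le\int_{x_n}^{x_{n+1}}x^\gamma\,dx$ for $n\ge 1$, so that, treating the first term separately, $h\sum_{n=0}^{N-1}x_{n+1/2}^\gamma\le h^{\gamma+1}2^{-\gamma}+\int_0^b x^\gamma\,dx\le b^{\gamma+1}2^{-\gamma}+\frac{b^{\gamma+1}}{\gamma+1}=:K_\gamma$, using $h\le b$ and $\gamma+1>0$.

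The delicate case is $\gamma=-1$, where the comparison integral diverges and the logarithm must be produced explicitly. Here I would isolate the $n=0$ term, which contributes $h\,x_{1/2}^{-1}=2$, and for $n\ge 1$ use $\frac{1}{n+1/2}=h\,x_{n+1/2}^{-1}\le\int_{x_n}^{x_{n+1}}\frac{dx}{x}=\ln\frac{n+1}{n}$ (Jensen for the convex function $1/t$, or plain monotonicity), so that the tail telescopes to $\sum_{n=1}^{N-1}\frac{1}{n+1/2}\le\ln N$. Since $N=b/h$, one has $\ln N=\ln(b/2)-\ln(h/2)$, hence $h\sum_{n=0}^{N-1}x_{n+1/2}^{-1}\le 2+\ln(b/2)-\ln(h/2)$, and any positive constant $K_{-1}$ exceeding $2+\ln(b/2)$ (e.g.\ $K_{-1}=|\ln(b/2)|+3$) delivers the stated strict inequality. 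I expect the main obstacle --- though it is bookkeeping rather than anything deep --- to be precisely this logarithmic case, together with organising the three regimes $\gamma<-1$, $\gamma=-1$, $\gamma>-1$ when $a=0$ and the routine checks of strictness and of the midpoint-versus-integral comparison on the first, singular subinterval.
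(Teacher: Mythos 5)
Your proof is correct and follows essentially the same strategy as the paper's: handle $a>0$ (and $a=0$, $\gamma\ge 0$) by a uniform bound on $x_{n+1/2}^\gamma$, and for $a=0$, $\gamma<0$ isolate the first term $hx_{1/2}^\gamma$ and compare the remaining midpoint sum with an integral, which produces the $h^{\gamma+1}$, logarithmic, and constant bounds in the three regimes. The only cosmetic differences are that you invoke a convergent series directly for $\gamma<-1$ and Jensen/monotonicity on each subinterval, whereas the paper bounds all of $\gamma<0$ at once by the single shifted integral $\int_h^b(x-h/2)^\gamma\,dx$ and then evaluates it case by case.
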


\begin{proof}
	Denote
	$$
	S_N = h\sum_{n = 0}^{N-1}x_{n+1/2}^\gamma.
	$$
	When $\gamma = 0$, the result is trivial. When $a > 0$, the mean of $\{x_{1/2}^\gamma, \ldots, x_{N-1/2}^\gamma\}$ is inside the interval $(a^\gamma, b^\gamma)$, when $\gamma > 0$, or inside $(b^\gamma, a^\gamma)$, when $\gamma < 0$, so $S_N$ is bounded; this argument still holds when $a = 0$ and $\gamma > 0$, because the mean of $\{x_{1/2}^\gamma, \ldots, x_{N-1/2}^\gamma\}$ is then in $(0^\gamma, b^\gamma) = (0, b^\gamma)$, so $S_N$ is bounded, too. On the other hand, when $a = 0$ and $\gamma < 0$:
	\begin{align*}
		0 < S_N & < hx_{1/2}^{\gamma} + \int_h^{b}(x-h/2)^{\gamma}dx
		\cr
		& = \left\{\begin{aligned}
			& 2 + \ln(b - h/2) - \ln(h/2), & & \text{if $\gamma = -1$},
			\\
			& \frac{h^{\gamma+1}}{2^\gamma} + \frac{(b-h/2)^{\gamma+1} - (h/2)^{\gamma+1}}{\gamma + 1}, & & \text{if $\gamma\in(-\infty,0)\backslash\{-1\}$},
		\end{aligned}\right.	
		\cr
		& < \left\{\begin{aligned}
			& 2 + \ln(b) - \ln(h/2), & & \text{if $\gamma = -1$},
			\\
			& \frac{2\gamma + 1}{2^{\gamma + 1}(\gamma + 1)}h^{\gamma+1}, & &\text{if $\gamma\in(-\infty,-1)$},
			\\
			& b^{\gamma+1}\left(\frac{1}{2^\gamma} + \frac{1}{\gamma + 1}\right), & & \text{if $\gamma\in(-1,0)$}.
		\end{aligned}\right.	
	\end{align*} \qed
\end{proof}

\subsection{Proof of  Theorem \ref{theo:intab}}

\begin{proof} When $\beta = 0$, we have the midpoint rule \eqref{e:midpoint}. Therefore, by hypothesis, $f\in\mathcal C^2[a, b]$, so $f''(x)$ is bounded in $[a, b]$, and it follows that
	$$
	\left|\int_{a}^{b} f(x)dx - h\sum_{n = 0}^{N-1}f(x_{n+1/2})\right| \le \frac{K}{N^2}.
	$$
	This is valid for any $a < b$. On the other hand, when $\beta\not=0$, we represent $E$ in \eqref{e:Eint0ab} as a sum of integrals over the intervals $[x_n, x_{n+1}]$:
	\begin{equation}
		\label{e:intabbound}
		E = \sum_{n = 0}^{N-1}\int_{x_n}^{x_{n+1}}x^\beta(f(x) - f(x_{n+1/2}))dx.
	\end{equation}	
	In order to study the behavior of the integral of $x^\beta(f(x) - f(x_{n+1/2}))$ over one single interval $[x_n, x_{n+1}]$, we consider the first terms of the Taylor expansion of $f(x)$ about $x = x_{n+ 1/2}$:
	$$
	f(x) = f(x_{n+1/2}) + f'(x_{n+1/2})(x - x_{n+1/2}) + \frac{f''(\xi_{n,x})}2 (x - x_{n+1/2})^2,
	$$
	for some $\xi_{n,x}$ depending on $x$ (and $n$), such that $x < \xi_{n,x} < x_{n+1/2}$ or $x_{n+1/2} < \xi_{n,x} < x$. Then,
	\begin{align*}
		& \int_{x_n}^{x_{n+1}}x^\beta(f(x) - f(x_{n+1/2}))dx = f'(x_{n+1/2})\int_{x_n}^{x_{n+1}}x^\beta(x - x_{n+1/2})dx
		\cr
		& \quad \qquad + \int_{x_n}^{x_{n+1}}\frac{f''(\xi_{n,x})}2x^\beta(x - x_{n+1/2})^2dx
		\cr
		& \quad = f'(x_{n+1/2})\int_{x_n}^{x_{n+1}}x^\beta(x - x_{n+1/2})dx  + \frac{f''(\xi_{n})}2\int_{x_n}^{x_{n+1}}x^\beta(x - x_{n+1/2})^2dx,
	\end{align*}
	for some $\xi_n\in(x_n, x_{n+1})$. Note that we have applied the generalized mean-value theorem in the second integral, which can be done, because $x^\beta(x - x_{n+1/2})^2$ is positive in $[x_n, x_{n+1}]$. Hence,
	\begin{align}
		\label{e:intaxnxn1bound0}
		\left|\int_{x_n}^{x_{n+1}}x^\beta(f(x) - f(x_{n+1/2}))dx\right| & \le |f'(x_{n+1/2})|\left|\int_{x_n}^{x_{n+1}}x^\beta(x - x_{n+1/2})dx\right|
		\cr
		& \quad + \frac{|f''(\xi_{n})|}2\left|\int_{x_n}^{x_{n+1}}x^\beta(x - x_{n+1/2})^2dx\right|,
	\end{align}
	where the second integral is trivially positive. On the other hand, if $f'(x)$ and $f''(x)$ are continuous in $[a, b]$, they are also bounded, so there are finite positive constants $M_1$ and $M_2$, such that $|f'(x)| \le M_1$, $|f''(x)| \le M_2$, respectively. Therefore,
	\begin{align}
	\label{e:intxnxn1xb}
		& \left|\int_{x_n}^{x_{n+1}}x^\beta(f(x) - f(x_{n+1/2}))dx\right| \le M_1\left|\int_{x_n}^{x_{n+1}}x^\beta(x - x_{n+1/2})dx\right|
		\cr
		& \quad + \frac{M_2}2\left|\int_{x_n}^{x_{n+1}}x^\beta(x - x_{n+1/2})^2dx\right|
		\le h^3\left(M_1K_1x_{n+1/2}^{\beta - 1} + \frac{M_2K_2}2x_{n+1/2}^{\beta}\right),
	\end{align}
	where we have used \eqref{e:boundKmhb} for $m = 1$ and $m = 2$. Now, coming back to \eqref{e:intabbound},
	\begin{align}
		\label{e:totalbound}
		|E| & \le\sum_{n = 0}^{N-1}\left|\int_{x_n}^{x_{n+1}}x^\beta(f(x) - f(x_{n+1/2}))dx\right|
 \le h^3\sum_{n = 0}^{N-1}\left(M_1K_1x_{n+1/2}^{\beta - 1} + \frac{M_2K_2}2x_{n+1/2}^{\beta}\right)
		\cr
		& \le \frac{(b-a)^2}{N^2}\left[M_1K_1\left(h\sum_{n = 0}^{N-1}x_{n+1/2}^{\beta - 1}\right) + \frac{M_2K_2}2\left(h\sum_{n = 0}^{N-1}x_{n+1/2}^{\beta}\right)\right].
	\end{align}
	When $a > 0$, the terms in parentheses are bounded by \eqref{e:boundmean}, which concludes the proof of the first case in \eqref{e:int0ab}. Moreover, when $a = 0$ and $\beta\in(0, 1)$, the same argument applies, and the terms in parentheses are again bounded by \eqref{e:boundmean}, which concludes the proof of the second case in \eqref{e:int0ab}.
	
	In order to prove the third and fourth cases in \eqref{e:int0ab}, i.e., those with $a = 0$ and $\beta\in(-1, 0)$, we proceed in a slightly different way, i.e., we consider the beginning of the Taylor expansion of $f'(x)$ on $x = 0$, i.e., $f'(x_{n+1/2}) = f'(0) + f''(\tilde\xi_n)x_{n+1/2}$, where $\tilde\xi_n\in(0,x_{n+1/2})$, and hence, $|f'(x_{n+1/2})| \le |f'(0)| + |f''(\tilde\xi_n)|x_{n+1/2}$. Introducing this last expression in \eqref{e:intaxnxn1bound0},
	\begin{align*}
		& \left|\int_{x_n}^{x_{n+1}}x^\beta(f(x) - f(x_{n+1/2}))dx\right| \le |f'(0)|\left|\int_{x_n}^{x_{n+1}}x^\beta(x - x_{n+1/2})dx\right|
		\cr
		& \quad + |f''(\tilde\xi_n)|x_{n+1/2}\left|\int_{x_n}^{x_{n+1}}x^\beta(x - x_{n+1/2})dx\right|
 + \frac{|f''(\xi_{n})|}2\left|\int_{x_n}^{x_{n+1}}x^\beta(x - x_{n+1/2})^2dx\right|.
	\end{align*}
	Using \eqref{e:boundKmhb} for $m = 1$ and $m = 2$, and that $|f''(x)|$ is bounded by a constant $M_2$ in $[0, b]$, because it is continuous over that interval, we get
	\begin{equation*}
		\left|\int_{x_n}^{x_{n+1}}x^\beta(f(x) - f(x_{n+1/2}))dx\right| \le |f'(0)| K_1h^3x_{n+1/2}^{\beta - 1} + M_2\frac{2K_1 + K_2}2h^3x_{n+1/2}^{\beta},
	\end{equation*}
	so the equivalent of \eqref{e:totalbound} is
	\begin{align*}
		|E| & \le |f'(0)| K_1h^2\left(h\sum_{n=0}^{N-1}x_{n+1/2}^{\beta - 1} \right) + M_2\left(K_1 + \frac{K_2}2\right)h^2\left(h\sum_{n=0}^{N-1}x_{n+1/2}^{\beta}\right).
	\end{align*}
	By \eqref{e:boundmean}, when $\beta\in(-1, 0)$, the second parenthesized term is bounded by a constant, and the first parenthesized term is bounded by a constant times $h^{\beta-1+1} = h^{\beta}$. Therefore,
	bearing in mind that $h = b / N$, there exist constants $\tilde K_1$ and $\tilde K_2$, such that
	$$
	|E| \le |f'(0)|\frac{\tilde K_1}{N^{\beta+2}} + \frac{\tilde K_2}{N^2}.
	$$
	When $f'(0) = 0$, the first term on the right-hand side of the inequality disappears, whereas, when $f'(0) \not= 0$, everything can be bounded by a constant times $1 / N^{2+\beta}$, because $1 / N^2 \le 1 / N^{2+\beta}$, for $\beta\in(-1,0)$. Therefore, this concludes the proof of the third and fourth cases in \eqref{e:int0ab}. \qed
\end{proof}

\begin{remark} \label{r:abph} Observe that the behavior of $E$ in \eqref{e:Eint0ab} is only dictated by what happens at $x = a$, and indeed, the other end of the integral does not even need to be a constant, provided that the total length of the integral is a natural multiple of $h$. For instance, if $p\in\mathbb N$, then, by \eqref{e:intxnxn1xb}, the following redefinition of $E$ satisfies \eqref{e:int0ab}, too:
\begin{equation*}
E = \int_{a}^{b\pm ph} x^\beta f(x)dx - \sum_{n = 0}^{N\pm p-1}\frac{x_{n+1}^{\beta+1} - x_n^{\beta+1}}{\beta + 1}f(x_{n+1/2}),
\end{equation*}
where, if the negative sign in $\pm$ is considered, $N - p - 1 \ge 0$ is obviously needed. The proof is straightforward, after replacing the upper limit $N-1$ of the summation signs in \eqref{e:totalbound} by $N\pm p - 1$.
\end{remark}

\section{A second-order approximation of the fractional Laplacian}

\label{s:secondorder}

Our aim is to apply Theorem \ref{theo:intab} to the case with $\alpha\not=1$ in \eqref{e:fraclap0pi2}:
\begin{equation}
	\label{e:fraclap0pi2not1}
	(-\Delta)^{\alpha/2}u(s) =  \frac{c_{\alpha}|\sin(s)|^{\alpha-1}}{L^\alpha\alpha(1-\alpha)} \int_0^\pi \frac{\sin^{\alpha}(\eta)(\sin(\eta)u_{ss}(\eta) + 2\cos(\eta)u_{s}(\eta))}{|\sin(s-\eta)|^{\alpha-1}}d\eta.
\end{equation}
However, the integrand in \eqref{e:fraclap0pi2not1} has three singularities, namely at $\eta = 0$, $\eta = \pi$ and $\eta = s$. Therefore, we need to generalize first Theorem \ref{theo:intab} to integrals of the form \eqref{e:Is0pi}. Indeed, we will divide such integrals into two, one on $[0,\pi/2]$ and one on $[\pi/2,\pi]$, and consider different cases, depending on whether the third singularity lies on one or the other interval (the special case $s = \pi/2$ will be treated separately). Each such integral can be written in the form 
\begin{equation}\label{double:sing}
\int_a^{b} x^\beta |x-y|^\gamma f(x,y)dx,
\end{equation}
as follows:
\begin{align*}
I(s) & = \int_0^{\pi/2} \eta^\beta |\eta - s|^\gamma\left(\frac{\sin(\eta)}{\eta}\right)^\beta\left(\frac{\sin(\eta - s)}{\eta - s}\right)^\gamma f(\eta)d\eta
	\cr
& \quad + \int_{\pi/2}^\pi (\pi - \eta)^\beta |\eta - s|^\gamma\left(\frac{\sin(\eta)}{\pi - \eta}\right)^\beta\left(\frac{\sin(\eta - s)}{\eta - s}\right)^\gamma f(\eta)d\eta,
\end{align*}
and then we can apply the modified midpoint rule, by evaluating the regular factor (which varies depending on where $s$ lies) at the midpoints,
and integrating exactly the singular part.

In order to generalize Theorem \ref{theo:intab} to integrals of the form \eqref{double:sing}, the following auxiliary result is required.
\begin{lemma} Let $\gamma\not=-1$, $y\not\in[x_n, x_{n+1}]$, and define
	$$
	E = \frac{\sgn(x_{n+1}-y)|x_{n+1}-y|^{\gamma + 1} - \sgn(x_n-y)|x_n-y|^{\gamma + 1}}{\gamma + 1} - h|x_{n+1/2}-y|^\gamma;
	$$
	then,
	\begin{equation}
		\label{e:quadraturecomparison}
		|E| \le K\,h^3|x_{n+1/2}-y|^{\gamma-2}.
	\end{equation}
	
\end{lemma}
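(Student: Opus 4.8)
The plan is to recognise $E$ as the error of the midpoint rule applied to $\int_{x_n}^{x_{n+1}}|x-y|^{\gamma}\,dx$, an integral whose integrand is now smooth on $[x_n,x_{n+1}]$ precisely because the singularity at $x=y$ lies \emph{outside} the interval, and then to estimate that error by a binomial expansion, in exactly the spirit of Lemma~\ref{lemma:intann1xbfx} and its corollary.

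First I would observe that, since $y\notin[x_n,x_{n+1}]$ and $\gamma\neq-1$, an antiderivative of $|x-y|^{\gamma}$ is $\sgn(x-y)|x-y|^{\gamma+1}/(\gamma+1)$, so that $E=\int_{x_n}^{x_{n+1}}|x-y|^{\gamma}\,dx-h\,|x_{n+1/2}-y|^{\gamma}$, with no case distinction needed. Replacing $x$ by $x_n+x_{n+1}-x$ (a reflection that fixes $x_{n+1/2}$, maps $[x_n,x_{n+1}]$ onto itself, sends $y$ to $x_n+x_{n+1}-y$, and leaves $|x_{n+1/2}-y|$ unchanged) we may assume $y<x_n$. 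Setting $d:=x_{n+1/2}-y=|x_{n+1/2}-y|>h/2$ and substituting $x=x_{n+1/2}+hv$, $v\in[-1/2,1/2]$, gives $x-y=d(1+\lambda v)$ with $\lambda:=h/d\in(0,2)$, hence
$$E=h\,d^{\gamma}\int_{-1/2}^{1/2}\bigl[(1+\lambda v)^{\gamma}-1\bigr]\,dv.$$

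Next I would evaluate the remaining integral by expanding $(1+\lambda v)^{\gamma}$ and integrating term by term (legitimate because $\lambda/2<1$, which is exactly the hypothesis $y\notin[x_n,x_{n+1}]$): the odd powers of $v$ integrate to zero over $[-1/2,1/2]$, and the $n=0$ term produces precisely the constant $1$ that cancels the $-1$, so that, just as in \eqref{e:1xb-1-xb},
$$\int_{-1/2}^{1/2}\bigl[(1+\lambda v)^{\gamma}-1\bigr]\,dv=\sum_{n=1}^{\infty}\frac{1}{2n+1}\binom{\gamma}{2n}\Bigl(\frac{\lambda}{2}\Bigr)^{2n}.$$
Factoring out $(\lambda/2)^{2}$ and using $(\lambda/2)^{2n-2}\le1$ for $n\ge1$, the right-hand side is bounded in absolute value by $(\lambda^{2}/4)\,K_{\gamma}$ with $K_{\gamma}:=\sum_{n\ge1}\frac{1}{2n+1}\bigl|\binom{\gamma}{2n}\bigr|$. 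Substituting back, $|E|\le K_{\gamma}\,h\,d^{\gamma}\,(\lambda^{2}/4)=(K_{\gamma}/4)\,h^{3}d^{\gamma-2}=(K_{\gamma}/4)\,h^{3}|x_{n+1/2}-y|^{\gamma-2}$, which is \eqref{e:quadraturecomparison} with $K=K_{\gamma}/4$.

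The only delicate point, and what I regard as the main obstacle, is the finiteness of $K_{\gamma}$; this is where the standing assumption $\gamma>-1$ of this section (cf.\ \eqref{e:Is0pi}) is essential. Since $\bigl|\binom{\gamma}{2n}\bigr|=\mathcal{O}(n^{-\gamma-1})$, the general term of $K_{\gamma}$ is $\mathcal{O}(n^{-\gamma-2})$, which is summable precisely when $\gamma>-1$; alternatively one may invoke the D'Alembert/Raabe criteria used elsewhere in the paper. For $\gamma=-1$ one would obtain a harmonic-type series, in line with the logarithmic case of Lemma~\ref{lemma:boundmean}. Beyond this, every step is a routine term-by-term estimate of the kind already carried out in Lemma~\ref{lemma:intann1xbfx}, the only new feature being that the singularity sits outside the integration interval rather than at one of its endpoints. \qed
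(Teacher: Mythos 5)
Your computation is in substance the same as the paper's: both identify the first term of $E$ as the exact value of $\int_{x_n}^{x_{n+1}}|x-y|^\gamma\,dx$, expand by the generalized binomial theorem, observe that the leading term cancels the midpoint value, and read off the factor $h^3|x_{n+1/2}-y|^{\gamma-2}$ from the first surviving term; your substitution $x=x_{n+1/2}+hv$ and the paper's direct appeal to \eqref{e:1xb-1-xb} produce literally the same series $\sum_{n\ge1}\frac{1}{2n+1}\binom{\gamma}{2n}(\lambda/2)^{2n}$ with $\lambda/2=(h/2)/|x_{n+1/2}-y|$. The one substantive divergence is the final estimate, and there you have a genuine gap relative to the statement as given. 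You bound $(\lambda/2)^{2n-2}\le 1$ and then need $K_\gamma=\sum_{n\ge1}\frac{1}{2n+1}\bigl|\binom{\gamma}{2n}\bigr|<\infty$, which, as you correctly compute, holds precisely when $\gamma>-1$. But the lemma's hypothesis is $\gamma\ne-1$, not $\gamma>-1$: there is no ``standing assumption $\gamma>-1$'' at this point in the paper (that restriction appears only in Theorem \ref{theo:int0pibg}), and Lemmas \ref{lemma:E1} and \ref{lemma:E2}, which invoke \eqref{e:quadraturecomparison}, are stated for all $\gamma\ne-1$. For $\gamma<-1$ your constant is infinite and the argument collapses. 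The paper instead retains the geometric factor, bounding $(\lambda/2)^{2n-2}\le\tilde K^{2n-2}$ with $\tilde K<1$ a uniform bound on $(h/2)/|x_{n+1/2}-y|$; since $\bigl|\binom{\gamma}{2n}\bigr|$ grows at most polynomially in $n$, the resulting series converges for every $\gamma\ne-1$, at the price of a constant that depends on how close $y$ may come to $[x_n,x_{n+1}]$ (such a uniform $\tilde K<1$ is available in all the applications, where $y$ is a grid point at least one cell away from the subintervals being summed). The fix to your argument is one line: keep $\tilde K^{2n-2}$ instead of $1$.

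On the range $\gamma>-1$ where your estimate does apply --- the only range needed for the fractional Laplacian, where $\gamma=1-\alpha$ --- your version is marginally stronger than the paper's, since your $K=K_\gamma/4$ depends only on $\gamma$ and is uniform in the position of $y$, whereas the paper's constant implicitly depends on $\tilde K$. The reduction by reflection to $y<x_n$ and the identification of the signed difference quotient as the exact integral are both correct and correspond to the paper's observation that $\sgn(x_{n+1}-y)\sgn(x_n-y)>0$.
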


\begin{proof} When $y\not\in[x_n, x_{n+1}]$, $\sgn(x_{n+1}-y)\sgn(x_n-y) > 0$, so
	\begin{align*}
		E & = \frac{(|x_{n+1/2} - y| + h/2)^{\gamma + 1} - (|x_{n+1/2} - y| - h/2)^{\gamma + 1}}{\gamma + 1} - h|x_{n+1/2}-y|^\gamma
		\cr
		& \quad = |x_{n+1/2} - y|^{\gamma + 1}\frac{\left(1 + \dfrac{h/2}{|x_{n+1/2} - y|}\right)^{\gamma + 1} - \left(1 - \dfrac{h/2}{|x_{n+1/2} - y|}\right)^{\gamma + 1}}{\gamma + 1}
 - h|x_{n+1/2}-y|^\gamma
		\cr
		& \quad = h|x_{n+1/2} - y|^{\gamma}\left[\sum_{n = 0}^{\infty}\frac{1}{2n+1}\binom{\gamma}{2n}\left(\dfrac{h/2}{|x_{n+1/2} - y|}\right)^{2n} - 1\right]
		\cr
		& \quad = h^3|x_{n+1/2} - y|^{\gamma-2}\sum_{n = 0}^{\infty}\frac{1}{8n+12}\binom{\gamma}{2n+2}\left(\dfrac{h/2}{|x_{n+1/2} - y|}\right)^{2n},
	\end{align*}
	where we have used \eqref{e:1xb-1-xb}. Bearing in mind that $(h/2)/|x_{n+1/2} - y| \le \tilde K < 1$, 
	\begin{equation*}
		|E|  \le h^3|x_{n+1/2}-y|^{\gamma-2}\sum_{n = 0}^{\infty}\frac{\tilde K^{2n}}{(8n+12)}\left|\binom{\gamma}{2n+2}\right| \le K\,h^3|x_{n+1/2}-y|^{\gamma-2},
	\end{equation*}
	where we have used that the last sum is convergent. \qed
\end{proof}

Now, we prove the following generalization of Theorem \ref{theo:intab}.
\begin{lemma} \label{lemma:E1} Let $f\in\mathcal C^2[a, b]$, with $0 \le a < b$, and let $\beta > -1$, $\gamma\not=-1$, $y\not\in[a,b]$, $N \in\mathbb N$, $h = (b-a) / N$, $x_n = a + hn$, etc. Then, the following expression has the same behavior as \eqref{e:Eint0ab}-\eqref{e:int0ab} in Theorem \ref{theo:intab}:
	\begin{align}
		\label{e:Eintabxbxygf}
		E = & \int_a^{b} x^\beta |x-y|^\gamma f(x)dx - \frac{1}{h}\sum_{n = 0}^{N-1}\frac{x_{n+1}^{\beta+1} - x_n^{\beta+1}}{\beta + 1}
			\cr
& \qquad \cdot \frac{\sgn(x_{n+1} - y)|x_{n+1} - y|^{\gamma + 1} - \sgn(x_n - y)|x_n - y|^{\gamma + 1}}{\gamma + 1}f(x_{n+1/2}).
	\end{align}
\end{lemma}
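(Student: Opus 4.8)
The plan is to reduce the statement to Theorem \ref{theo:intab} by isolating the factor $|x-y|^\gamma$. Since $y\not\in[a,b]$, the function $|x-y|^\gamma$ is $\mathcal C^\infty$ on $[a,b]$ and is bounded there, together with its reciprocal, by constants depending only on $\gamma$, $a$, $b$ and $y$; in particular $g(x):=|x-y|^\gamma f(x)$ belongs to $\mathcal C^2[a,b]$. Writing
\begin{align*}
w_n &= \frac{x_{n+1}^{\beta+1}-x_n^{\beta+1}}{\beta+1} = \int_{x_n}^{x_{n+1}}x^\beta\,dx, \\
v_n &= \frac1h\cdot\frac{\sgn(x_{n+1}-y)|x_{n+1}-y|^{\gamma+1}-\sgn(x_n-y)|x_n-y|^{\gamma+1}}{\gamma+1},
\end{align*}
so that the quadrature in \eqref{e:Eintabxbxygf} equals $\sum_{n=0}^{N-1}w_n v_n f(x_{n+1/2})$, I would split $E=E_1+E_2$ with
\begin{align*}
E_1 &= \int_a^b x^\beta g(x)\,dx - \sum_{n=0}^{N-1}w_n\,g(x_{n+1/2}), \\
E_2 &= \sum_{n=0}^{N-1}w_n\,f(x_{n+1/2})\bigl(|x_{n+1/2}-y|^\gamma - v_n\bigr),
\end{align*}
which is an identity because $g(x_{n+1/2})=|x_{n+1/2}-y|^\gamma f(x_{n+1/2})$.

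The term $E_1$ is exactly the error \eqref{e:Eint0ab} of Theorem \ref{theo:intab} with $f$ replaced by $g\in\mathcal C^2[a,b]$ (and $\beta>-1$ is assumed throughout this lemma, so the hypotheses hold even when $a=0$). Hence $|E_1|$ already obeys the estimates \eqref{e:int0ab}, with the sole understanding that, in the cases $a=0$ and $\beta\in(-1,0)$, the dichotomy is governed by $g'(0)$ instead of $f'(0)$; note that $g'(0)$ is an explicit linear combination of $f(0)$ and $f'(0)$, since $|x-y|^\gamma$ is smooth and nonzero at $x=0$.

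It then remains to check that $E_2$ is negligible compared with $E_1$. For the weight, $|w_n|\le K_0\,h\,x_{n+1/2}^\beta$ by \eqref{e:boundKmhb} with $m=0$, and $|f(x_{n+1/2})|\le\|f\|_\infty$; for the remaining factor, dividing \eqref{e:quadraturecomparison} by $h$ gives $\bigl||x_{n+1/2}-y|^\gamma-v_n\bigr|\le K\,h^2\,|x_{n+1/2}-y|^{\gamma-2}$, and since $y\not\in[a,b]$ the quantity $|x_{n+1/2}-y|^{\gamma-2}$ is bounded above by a constant depending only on $\gamma$, $a$, $b$ and $y$. Multiplying and summing, Lemma \ref{lemma:boundmean} (used in its ``otherwise'' branch, which applies because $\beta>-1$) yields
\[
|E_2| \le C\,h^2\Bigl(h\sum_{n=0}^{N-1}x_{n+1/2}^\beta\Bigr) \le \tilde C\,h^2 = \frac{\tilde C\,(b-a)^2}{N^2}.
\]
Since $N^{-2}$ is never larger than the bound claimed for $E$ in \eqref{e:int0ab} (recall that $N^{-2}\le N^{-2-\beta}$ for $\beta\in(-1,0)$), the triangle inequality $|E|\le|E_1|+|E_2|$ shows that $E$ inherits the behavior of $E_1$, as asserted. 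The one delicate point is the bookkeeping for $E_2$: one must ensure that the constants in \eqref{e:boundKmhb} and \eqref{e:quadraturecomparison} are independent of $n$ (they are, as noted in the remark following the corollary), so that summation over $n$ really produces only the extra factor $h\sum_n x_{n+1/2}^\beta$, uniformly controlled by Lemma \ref{lemma:boundmean}; everything else is a direct application of results already established.
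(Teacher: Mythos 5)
Your proposal is correct and follows essentially the same route as the paper: the identical splitting $E=E_1+E_2$, with $E_1$ handled by Theorem \ref{theo:intab} applied to $|x-y|^\gamma f(x)$ and $E_2$ controlled via \eqref{e:quadraturecomparison}. The only (immaterial) difference is in summing $E_2$: the paper compares $\sum_n w_n$ directly with $\int_a^b x^\beta dx$ using Theorem \ref{theo:intab} with $f\equiv 1$, while you use \eqref{e:boundKmhb} with $m=0$ together with Lemma \ref{lemma:boundmean}; your explicit remark that the $f'(0)$ dichotomy transfers to $g'(0)$ is a careful touch the paper leaves implicit.
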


\begin{proof} Let us rewrite \eqref{e:Eintabxbxygf} by adding and subtracting the same term:
	\begin{align*}
		E & = \Bigg(\int_{a}^{b} x^\beta |x - y|^\gamma f(x)dx - \sum_{n = 0}^{N-1}\frac{x_{n+1}^{\beta+1} - x_n^{\beta+1}}{\beta + 1}|x_{n+1/2} - y|^\gamma f(x_{n+1/2})\Bigg)
		\cr
		& \quad + \frac1h\sum_{n = 0}^{N-1}\frac{x_{n+1}^{\beta+1} - x_n^{\beta+1}}{\beta + 1}\bigg(h|x_{n+1/2}-y|^\gamma 
		\cr
		& \quad \qquad - \frac{\sgn(x_{n+1} - y)|x_{n+1} - y|^{\gamma + 1} - \sgn(x_n - y)|x_n - y|^{\gamma + 1}}{\gamma + 1}\bigg)f(x_{n+1/2})
		\cr
		& = E_1 + E_2.
	\end{align*}
	The behavior of $|E_1|$ is dictated directly by \eqref{e:Eint0ab}-\eqref{e:int0ab} applied to $\tilde f(x) = |x - y|^\gamma f(x)$, which has the same regularity as $f(x)$ on $[a, b]$. On the other hand, in order to bound $|E_2|$, we apply  \eqref{e:quadraturecomparison},  and bear in mind the fact that $|x - y|^{\gamma+2}f(x)$ is continuous on $[a, b]$, and hence, bounded, so there is a constant $M$, such that $|x_{n+1/2}-y|^{\gamma-2}|f(x_{n+1/2})| \le M$, for all $n$:
	\begin{align*}
		|E_2| & \le Kh^2\sum_{n = 0}^{N-1}\frac{x_{n+1}^{\beta+1} - x_n^{\beta+1}}{\beta + 1}|x_{n+1/2}-y|^{\gamma-2}|f(x_{n+1/2})| 
\le K\,M\,h^2\sum_{n = 0}^{N-1}\frac{x_{n+1}^{\beta+1} - x_n^{\beta+1}}{\beta + 1}
		\cr
		& \le \tilde K h^2\left|\sum_{n = 0}^{N-1}\frac{x_{n+1}^{\beta+1} - x_n^{\beta+1}}{\beta + 1} - \int_a^bx^\beta dx\right| + \tilde K h^2\int_a^bx^\beta dx \le \tilde{\tilde K}h^2,
	\end{align*}
	where, in the first term of the last line, we have used \eqref{e:int0ab} applied to $f(x) = 1$. Since the behavior of $|E_1|$ is more restrictive than that of $|E_2|$, \eqref{e:Eintabxbxygf} behaves like $|E_1|$, and hence, like \eqref{e:Eint0ab}-\eqref{e:int0ab}. \qed
\end{proof}

At this point, it is important to remark that, when $a + b = 0$, the behavior of the error \eqref{e:Eintabxbxygf} in Lemma~\ref{lemma:E1} is even better, as shown in the following result.
\begin{lemma}\label{lemma:E2}
Let $f\in\mathcal C^2[-a, a]$, with $a > 0$, and let $\beta > -1$, $\gamma\not=-1$, $y\not\in[-a, a]$, $N \in\mathbb N$, $h = a / N$, $x_n = hn$, etc. Define
\begin{align}
\label{e:Eintaaxbxygf}
		E & = \int_{-a}^{a} |x|^\beta |x-y|^\gamma f(x)dx - \frac{1}{h}\sum_{n = -N}^{N-1}\frac{\sgn(x_{n+1})|x_{n+1}|^{\beta+1} - \sgn(x_n)|x_n|^{\beta+1}}{\beta + 1}
		\cr
& \qquad \cdot \frac{\sgn(x_{n+1} - y)|x_{n+1} - y|^{\gamma + 1} - \sgn(x_n - y)|x_n - y|^{\gamma + 1}}{\gamma + 1} f(x_{n+1/2}).
\end{align}
Then, there exists $K > 0$, such that
$$
|E| \le \frac{K}{N^2}.
$$
\end{lemma}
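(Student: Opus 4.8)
The idea is to reduce Lemma~\ref{lemma:E2} to the already-established Lemma~\ref{lemma:E1} by exploiting the symmetry of $[-a,a]$, and then to observe that the symmetrization produces an integrand whose non-polynomial factor is \emph{even}, so that one lands in the benign cases of Theorem~\ref{theo:intab} and never in the slow fourth case with $\beta\in(-1,0)$ and nonvanishing derivative at the origin.

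First I would split both the integral and the quadrature sum in \eqref{e:Eintaaxbxygf} at $x=0$, writing $E=E^{R}+E^{L}$, where $E^{R}$ pairs $\int_0^a$ with the terms $n=0,\dots,N-1$ and $E^{L}$ pairs $\int_{-a}^0$ with the terms $n=-N,\dots,-1$. On $[0,a]$ one has $|x|^\beta=x^\beta$ and $\sgn(x_{n+1})|x_{n+1}|^{\beta+1}-\sgn(x_n)|x_n|^{\beta+1}=x_{n+1}^{\beta+1}-x_n^{\beta+1}$, so $E^{R}$ is \emph{literally} the error \eqref{e:Eintabxbxygf} of Lemma~\ref{lemma:E1} on $[a,b]=[0,a]$ for the data $(f,y)$. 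For $E^{L}$ I would substitute $x\mapsto -x$ in the integral and reindex the sum by $n=-1-m$, $m=0,\dots,N-1$; then $x_{n+1/2}\mapsto -x_{m+1/2}$, and a short computation carrying the $\sgn(\cdot)$ factors through the reflection shows that the $|x|^\beta$-weight becomes $x_{m+1}^{\beta+1}-x_m^{\beta+1}$ and the $|x-y|^\gamma$-weight becomes $\sgn(x_{m+1}+y)|x_{m+1}+y|^{\gamma+1}-\sgn(x_m+y)|x_m+y|^{\gamma+1}$, i.e. the Lemma~\ref{lemma:E1} weight with the pole moved to $-y$, while the sample point becomes $f(-x_{m+1/2})$. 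Hence $E^{L}$ is exactly the error \eqref{e:Eintabxbxygf} of Lemma~\ref{lemma:E1} on $[0,a]$ for the data $\bigl(f(-\,\cdot\,),\,-y\bigr)$; note $-y\notin[0,a]$ since $y\notin[-a,a]$, so the lemma applies.

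Next I would feed this into the \emph{proof} of Lemma~\ref{lemma:E1}: there one writes $E=E_1+E_2$ with $|E_2|\le\tilde{\tilde K}h^2=O(N^{-2})$, and $E_1$ equal to the Theorem~\ref{theo:intab} quadrature error for the integrand $x^\beta\tilde f(x)$, where $\tilde f(x)=|x-y|^\gamma f(x)\in\mathcal C^2[0,a]$ (as $y\notin[0,a]$). Applying this to each half, and using that $g\mapsto\sum_n\frac{x_{n+1}^{\beta+1}-x_n^{\beta+1}}{\beta+1}g(x_{n+1/2})$ and $g\mapsto\int_0^a x^\beta g$ are linear, the quantity $E_1^{R}+E_1^{L}$ is the Theorem~\ref{theo:intab} error for the single integrand $x^\beta F(x)$ with
\[
F(x)=|x-y|^\gamma f(x)+|x+y|^\gamma f(-x).
\]
Since $\pm y\notin[-a,a]$, the factors $|x\mp y|^\gamma$ are $\mathcal C^2$ on $[-a,a]$, so $F\in\mathcal C^2[-a,a]$; moreover $F(-x)=F(x)$, hence $F'(0)=0$. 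Therefore $x^\beta F(x)$ falls into the second case of Theorem~\ref{theo:intab} when $\beta\ge0$ and into the third case (because $F'(0)=0$) when $\beta\in(-1,0)$ — never the slow fourth case — so $|E_1^{R}+E_1^{L}|\le KN^{-2}$. Combining, $|E|\le|E_1^{R}+E_1^{L}|+|E_2^{R}|+|E_2^{L}|\le KN^{-2}$, which is the assertion.

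\textbf{Main obstacle.} The only delicate point is the bookkeeping in the reflection step: one must chase the $\sgn(\cdot)$ factors in the modified-midpoint weights correctly through $x\mapsto -x$ and the index shift $n\mapsto -1-m$ so as to identify $E^{L}$ unambiguously with the Lemma~\ref{lemma:E1} error for the reflected data $\bigl(f(-\,\cdot\,),-y\bigr)$. Once that identification is in place, the evenness of $F$ — equivalently, the exact cancellation of the two leading $O(N^{-2-\beta})$ contributions coming from the two halves — does all the remaining work.
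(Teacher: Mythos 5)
Your proposal is correct and follows essentially the same route as the paper's own proof: both hinge on folding the two halves of $[-a,a]$ into a single quadrature error on $[0,a]$ for the symmetrized integrand $x^\beta\bigl(|x-y|^\gamma f(x)+|x+y|^\gamma f(-x)\bigr)$, whose derivative vanishes at $0$, so that Theorem~\ref{theo:intab} gives $O(N^{-2})$ while the midpoint-versus-exact-integration discrepancy in the $|x-y|^\gamma$ weight is controlled by \eqref{e:quadraturecomparison} exactly as in Lemma~\ref{lemma:E1}. The only difference is organizational (you split at $x=0$ first and recombine the $E_1$ pieces afterwards, whereas the paper performs the add-and-subtract step globally before folding), which does not change the substance.
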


\begin{proof} The proof is very similar to that of Lemma \ref{lemma:E1}. Let us rewrite \eqref{e:Eintaaxbxygf} by adding and subtracting the same term:
\begin{align*}
E & = \Bigg(\int_{-a}^{a} |x|^\beta |x-y|^\gamma f(x)dx
	\cr
& \quad - \sum_{n = -N}^{N-1}\frac{\sgn(x_{n+1})|x_{n+1}|^{\beta+1} - \sgn(x_n)|x_n|^{\beta+1}}{\beta + 1}|x_{n+1/2} - y|^\gamma f(x_{n+1/2})\Bigg)
	\cr
& \quad
+ \frac{1}{h}\sum_{n = -N}^{N-1}\frac{\sgn(x_{n+1})|x_{n+1}|^{\beta+1} - \sgn(x_n)|x_n|^{\beta+1}}{\beta + 1}\bigg[h|x_{n+1/2} - y|^\gamma 
	\cr
& \quad \qquad
- \frac{\sgn(x_{n+1} - y)|x_{n+1} - y|^{\gamma + 1} - \sgn(x_{n} - y)|x_{n} - y|^{\gamma + 1}}{\gamma + 1}\bigg] f(x_{n+1/2})
	\cr
& = E_1 + E_2.
\end{align*}
In order to bound $|E_1|$, we define $g(x) = |x-y|^\gamma f(x) + |x+y|^\gamma f(-x)$. Then, it is immediate to check that
	\begin{align*}
		E_1 & = \int_0^{a} x^\beta g(x)dx - \sum_{n = 0}^{N-1}\frac{x_{n+1}^{\beta+1} - x_n^{\beta+1}}{\beta + 1}g(x_{n+1/2}).
	\end{align*}
	Furthermore,
	\begin{align*}
		g'(x) & = \gamma \sgn(x - y)|x - y|^{\gamma-1}f(x) + |x - y|^\gamma f'(x)
		\cr
		& \qquad + \gamma \sgn(x + y)|x + y|^{\gamma-1}f(-x) - |x + y|^\gamma f'(-x) \Longrightarrow g'(0) = 0,
	\end{align*}
	so, from \eqref{e:int0ab}, $|E_1| \le K / N^2$.

In order to bound $|E_2|$, we apply  again \eqref{e:quadraturecomparison}, and that there is a constant $M$, such that $|x_{n+1/2}-y|^{\gamma-2}|f(x_{n+1/2})| \le M$, for all $n$:
\begin{align*}
|E_2| & \le Kh^2\sum_{n = -N}^{N-1}\frac{\sgn(x_{n+1})|x_{n+1}|^{\beta+1} - \sgn(x_n)|x_n|^{\beta+1}}{\beta + 1}|x_{n+1/2}-y|^{\gamma-2} |f(x_{n+1/2})|
	\cr
& \le KMh^2\sum_{n = -N}^{N-1}\frac{\sgn(x_{n+1})|x_{n+1}|^{\beta+1} - \sgn(x_n)|x_n|^{\beta+1}}{\beta + 1}
	\cr
& = KMh^2\sum_{n = -N}^{N-1}\int_{x_n}^{x_{n+1}}|x|^\beta dx = K\,M\,h^2\int_{-a}^{a}|x|^\beta  dx = \tilde Kh^2. 
\end{align*}
Therefore, both $|E_1|$ and $|E_2|$ behave like $\mathcal O(h^2) = \mathcal O(1/ N^2)$. \qed
\end{proof}

Lemmas \ref{lemma:E1} and \ref{lemma:E2} allow us to construct a second-order numerical approximation of \eqref{e:fraclap0pi2not1}; in fact, we will prove a more general result. Taking into account \eqref{e:xxis}, we need to adjust our notation; in the rest of the paper, $s$ will be used when we work on the $[0,\pi]$ interval, and $x$, when we work on the whole real line $\mathbb R$. Therefore, given $N\in\mathbb N$, $r\in\mathbb N$, we define the following family of equally spaced nodes that divide $[0,\pi]$ in $2rN$ equal-length subintervals:
\begin{equation}
\label{e:snr}
\tilde s_{n} = \frac{\pi n}{2rN} = h_rn, \text{ where } h_r \equiv \frac{\pi}{2rN}, \ n\in\{0, \ldots, 2rN\};
\end{equation}
hence, $\tilde s_{n+1/2} = h_r(n+1/2)$, $\tilde s_{n+1} = h_r(n+1)$, and
$$
[0, \pi] = \bigcup_{n = 0}^{2Nr - 1}[\tilde s_n, \tilde s_{n+1}].
$$
On the other hand, \eqref{e:fraclap0pi2not1} will be approximated at the nodes $s_j$, defined as
\begin{equation}
\label{e:sj}
s_j \equiv \frac{(2j+1)\pi}{2N}, \quad j\in\{0, \ldots, N-1\}.
\end{equation}
Observe that, in \eqref{e:sj}, we have preferred to write $s_j$ instead of $s_{j+1/2}$, because it is less cumbersome, and there is absolutely no risk of confusion with the nodes $\tilde s_n$.

A crucial property of the nodes $s_j$, because it enables applying correctly the quadrature formulas considered in this paper, is that, for any fixed $r\in\mathbb N$, there are two intervals of the form $[\tilde s_n, \tilde s_{n+1}]$, such that $s_j$ is respectively, their right and left end:
\begin{equation}
\label{e:sjsnr}
s_j = \tilde s_n \Longrightarrow \frac{(2j+1)\pi}{2N} = \frac{\pi n}{2rN} \Longrightarrow n = (2j + 1)r;
\end{equation}
i.e., those intervals are precisely $[\tilde s_{(2j+1)r-1}, s_j]$ and $[s_j, \tilde s_{(2j+1)r+1}]$.

Gathering all the previous arguments, we have the following theorem, which is the main result of this paper, and which allows us to approximate numerically \eqref{e:Is0pi}, from which \eqref{e:fraclap0pi2not1} is a particular case times a factor outside the integral sign.
\begin{theorem} \label{theo:int0pibg} Let $f\in\mathcal C^2[0, \pi]$, $\beta > 0$, $\gamma>-1$, $N\in\mathbb N$, $r\in\mathbb N$, $h_r = \pi / (2rN)$, $s_j$ as defined in \eqref{e:sj}, and $\tilde s_n$ as defined in \eqref{e:snr}.
For a given $s_j$, let $I(s_j)$ denote \eqref{e:Is0pi} evaluated at $s = s_j$:
\begin{equation}
\label{e:Isj0pi}
I(s_j) = \int_0^\pi \sin^\beta(\eta)|\sin(\eta - s_j)|^\gamma f(\eta)d\eta.
\end{equation}
Define
\begin{align}
\label{e:A1j}
A_{1,j} & = \frac1{h_r}\sum_{n = 0}^{rN-1}\left(\frac{\sin(\tilde s_{n+1/2})}{\tilde s_{n+1/2}}\right)^\beta\left(\frac{\sin(\tilde s_{n+1/2}-s_j)}{\tilde s_{n+1/2}- s_j}\right)^\gamma\frac{\tilde s_{n+1}^{\beta+1} - \tilde s_n^{\beta + 1}}{\beta + 1}
\cr
& \quad
\cdot\frac{\sgn(\tilde s_{n+1} - s_j)|\tilde s_{n+1} - s_j|^{\gamma + 1} - \sgn(\tilde s_{n} - s_j)|\tilde s_{n} - s_j|^{\gamma + 1}}{\gamma + 1}f(\tilde s_{n+1/2}),
	\\
\label{e:A2j}
A_{2,j} & = \frac1{h_r}\sum_{n = rN}^{2rN-1}\left(\frac{\sin(\tilde s_{n+1/2})}{\pi - \tilde s_{n+1/2}}\right)^\beta\left(\frac{\sin(\tilde s_{n+1/2}-s_j)}{\tilde s_{n+1/2}- s_j}\right)^\gamma
\frac{(\pi-\tilde s_n)^{\beta+1} - (\pi-\tilde s_{n+1})^{\beta + 1}}{\beta + 1}
\cr
& \quad
\cdot\frac{\sgn(\tilde s_{n+1} - s_j)|\tilde s_{n+1} - s_j|^{\gamma + 1} - \sgn(\tilde s_{n} - s_j)|\tilde s_{n} - s_j|^{\gamma + 1}}{\gamma + 1}f(\tilde s_{n+1/2}).
\end{align}
Then, $A_{1,j} + A_{2,j}$ approximate numerically $I(s_j)$, and, there is $K > 0$, such that, for all  $j\in\{0, \ldots, N-1\}$, the numerical error
\begin{equation}
	\label{e:EIsj}
	E_j = I(s_j) - A_{1,j} - A_{2,j}
\end{equation}
satisfies
\begin{equation*}
|E_j| \le \frac{K}{r^2}.
\end{equation*}

\end{theorem}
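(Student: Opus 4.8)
The plan is to reduce $E_j=I(s_j)-A_{1,j}-A_{2,j}$ to situations already covered by Theorem~\ref{theo:intab} and Lemmas~\ref{lemma:E1}--\ref{lemma:E2}. First I would split $I(s_j)=\int_0^{\pi/2}+\int_{\pi/2}^{\pi}$, noting that $A_{1,j}$ is exactly the quadrature attached to $[0,\pi/2]$ (its nodes run from $\tilde s_0=0$ to $\tilde s_{rN}=\pi/2$) and $A_{2,j}$ to $[\pi/2,\pi]$. On each half I rewrite the integrand in the form $x^{\beta}|x-s_j|^{\gamma}\cdot(\text{regular})$, using $\sin\eta=\eta\,(\sin\eta/\eta)$ on $[0,\pi/2]$, $\sin\eta=(\pi-\eta)\,(\sin\eta/(\pi-\eta))$ on $[\pi/2,\pi]$, and $\sin(\eta-s_j)=(\eta-s_j)(\sin(\eta-s_j)/(\eta-s_j))$; since the ratios occurring are $\mathcal C^\infty$ and strictly positive on the ranges that matter, the regular factor $H$ multiplying $x^{\beta}|x-s_j|^{\gamma}$ is $\mathcal C^2$, and $A_{1,j}$, $A_{2,j}$ become literally the product-midpoint quadratures analysed in Lemma~\ref{lemma:E1} (after the reflection $\eta\mapsto\pi-\eta$ on the second half, which moves the $\eta=\pi$ singularity to the left endpoint). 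For whichever half does \emph{not} contain $s_j$, the point $y=s_j$ lies outside the interval, so Lemma~\ref{lemma:E1} applies directly; since $\beta>0$ we land in the second case of \eqref{e:int0ab}, and the error of that half is $\mathcal O(1/(rN)^2)=\mathcal O(1/r^2)$.

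The remaining half is the harder one; here I would exploit that, by \eqref{e:sjsnr}, $s_j$ is itself a grid node $\tilde s_{(2j+1)r}$. Fix a length $\rho_0$ with $0<\rho_0<\min(s_j,\pi/2-s_j)$ (if $s_j=\pi/2$, take $\rho_0<\pi/2$ and let the neighbourhood straddle $\pi/2$), set $q=\lfloor\rho_0/h_r\rfloor$, and split the half containing $s_j$ at the nodes $\tilde s_{(2j+1)r\pm q}$ into a symmetric neighbourhood $[s_j-qh_r,\,s_j+qh_r]$ of $s_j$ and two ``far'' pieces; the sums $A_{1,j}$ (resp.\ $A_{2,j}$) split accordingly because these are all nodes. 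On each far piece $|\eta-s_j|^{\gamma}$ is now separated from its singularity by at least $qh_r\ge\rho_0/2$, so the only active singularity is $\eta^{\beta}$ at $0$ (or $(\pi-\eta)^{\beta}$ at $\pi$, or none on the piece $[s_j+qh_r,\cdot]$); Lemma~\ref{lemma:E1} again gives $\mathcal O(1/r^2)$ for each, with constants uniform because each far piece consists of $cr$-many subintervals with $c$ bounded below --- this is precisely where $\rho_0<\min(s_j,\pi/2-s_j)$ is used, and where one checks $q\ge cr$ for $r$ large.

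The core step is the symmetric neighbourhood $[s_j-qh_r,\,s_j+qh_r]$, and it is the main obstacle. There $\eta^{\beta}$ (resp.\ $(\pi-\eta)^{\beta}$) is smooth and bounded away from $0$, so the integrand equals $|\eta-s_j|^{\gamma}\Psi(\eta)$ with $\Psi\in\mathcal C^2$; after the shift $u=\eta-s_j$ this is exactly the integral treated in Lemma~\ref{lemma:E2}, with the two roles interchanged: $|u|^{\gamma}$ plays the part of $|x|^{\beta}$ there (legitimate since $\gamma>-1$), while $\eta^{\beta}=(s_j+u)^{\beta}=|u-(-s_j)|^{\beta}$ plays the part of $|x-y|^{\gamma}$ with $y=-s_j$, which lies outside $[-qh_r,qh_r]$ precisely because $qh_r\le\rho_0<s_j$. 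One verifies term by term that the neighbourhood portion of $A_{1,j}$ (resp.\ $A_{2,j}$) coincides with the quadrature of Lemma~\ref{lemma:E2}, so the error on the neighbourhood is $\mathcal O(1/q^2)=\mathcal O(1/r^2)$. The reason a symmetric neighbourhood is essential: a naive split $[\cdot,s_j]\cup[s_j,\cdot]$ leaves on each sub-piece a singularity $|\eta-s_j|^{\gamma}$ at an endpoint, and Theorem~\ref{theo:intab} then only yields $\mathcal O(1/r^{2+\gamma})$ when $\gamma\in(-1,0)$, which is weaker than claimed; it is the endpoint cancellation built into Lemma~\ref{lemma:E2} that restores full second order.

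Finally I would assemble: $|E_j|$ is bounded by the contributions of the two far pieces and the neighbourhood, each $\mathcal O(1/r^2)$. In the exceptional case $s_j=\pi/2$ there is one extra term, because the two one-sided decompositions of $\sin^{\beta}\eta$ across $\pi/2$ do not glue to the single symmetric decomposition $|\sin(\eta-\pi/2)|^{\gamma}=|\eta-\pi/2|^{\gamma}\,|\cos\eta/(\pi/2-\eta)|^{\gamma}$ used to invoke Lemma~\ref{lemma:E2}; comparing the actual quadrature to the one built from the symmetric decomposition, the per-interval discrepancy is of size $\mathcal O(h_r^2)\cdot|\int_{\tilde s_n}^{\tilde s_{n+1}}|\eta-\pi/2|^{\gamma}d\eta|$, whose sum is $\mathcal O(h_r^2)=\mathcal O(1/r^2)$ by Lemma~\ref{lemma:boundmean}. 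Since $j$ ranges over the finite set $\{0,\dots,N-1\}$, taking the maximum of the implied constants (and absorbing the small values of $r$, for which the bound is trivial) yields a single $K$ with $|E_j|\le K/r^2$ for all $j$ and all $r$.
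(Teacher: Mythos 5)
Your proposal is correct and follows essentially the same route as the paper's proof: split the integral at $\pi/2$, factor out the regular ratios, treat the half not containing $s_j$ by Lemma~\ref{lemma:E1}, isolate a grid-aligned symmetric neighbourhood of $s_j$ on which the symmetric-interval result (Lemma~\ref{lemma:E2}, with the roles of the two exponents interchanged and $y=-s_j$ outside the interval) restores full second order, and handle the exceptional case $s_j=\pi/2$ by comparing the one-sided weights to a single symmetric decomposition via the per-interval estimate \eqref{e:quadraturecomparison}. The only cosmetic difference is the radius of the neighbourhood --- you take roughly $\rho_0<\min(s_j,\pi/2-s_j)$, while the paper takes $h_r\lfloor r/2\rfloor\approx\pi/(4N)$ --- which does not affect the mechanics of the argument.
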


\begin{proof}
Let us rewrite \eqref{e:Isj0pi}:
\begin{align}
\label{e:Isj}
I(s_j) & = \int_0^{\pi/2} \eta^\beta |\eta - s_j|^\gamma\left(\frac{\sin(\eta)}{\eta}\right)^\beta\left(\frac{\sin(\eta - s_j)}{\eta - s_j}\right)^\gamma f(\eta)d\eta
	\cr
& \quad + \int_{\pi/2}^\pi (\pi - \eta)^\beta |\eta - s_j|^\gamma\left(\frac{\sin(\eta)}{\pi - \eta}\right)^\beta\left(\frac{\sin(\eta - s_j)}{\eta - s_j}\right)^\gamma f(\eta)d\eta
	\cr
& = I_{1,j} + I_{2,j};
\end{align}
note that $\eta - s_j\in(-\pi, \pi)$, which implies that $\sin(\eta - s_j) / (\eta - s_j) > 0$ (as usual, $\sin(0)/0\equiv1$), and hence, it is possible to replace $|\cdot|^\gamma$ by $(\cdot)^\gamma$. On the other hand, the structure of $I_1$ and $I_2$ in \eqref{e:Isj} suggests defining
\begin{align*}
g_{1,j}(s) & = \left(\frac{\sin(s)}{s}\right)^\beta\left(\frac{\sin(s - s_j)}{s - s_j}\right)^\gamma f(s),
	\cr
g_{2,j}(s) & = \left(\frac{\sin(s)}{\pi - s}\right)^\beta\left(\frac{\sin(s - s_j)}{s - s_j}\right)^\gamma f(s),
\end{align*}
which, for any $j$, have the same regularity as $f(s)$ over $s\in(0,\pi)$. Therefore, $E_j$, as defined in \eqref{e:EIsj}, becomes
\begin{align*}
E_j & = \Bigg(\int_0^{\pi/2} \eta^\beta|\eta - s_j|^\gamma g_{1,j}(\eta)d\eta - \frac1{h_r}\sum_{n = 0}^{rN-1}\frac{\tilde s_{n+1}^{\beta+1} - \tilde s_n^{\beta + 1}}{\beta + 1}
\cr
& \qquad
\cdot\frac{\sgn(\tilde s_{n+1} - s_j)|\tilde s_{n+1} - s_j|^{\gamma + 1} - \sgn(\tilde s_{n} - s_j)|\tilde s_{n} - s_j|^{\gamma + 1}}{\gamma + 1}g_{1,j}(s_{n+1/2})\Bigg)
\cr
& \quad + \Bigg(\int_{\pi/2}^\pi (\pi-\eta)^\beta|\eta - s_j|^\gamma g_{2,j}(\eta)d\eta - \frac1{h_r}\sum_{n = rN}^{2rN-1}\frac{(\pi - \tilde s_n)^{\beta+1} - (\pi - \tilde s_{n+1})^{\beta + 1}}{\beta + 1}
\cr
& \qquad \cdot\frac{\sgn(\tilde s_{n+1} - s_j)|\tilde s_{n+1} - s_j|^{\gamma + 1} - \sgn(\tilde s_{n} - s_j)|\tilde s_{n} - s_j|^{\gamma + 1}}{\gamma + 1}g_{2,j}(s_{n+1/2})\Bigg)
\cr
& = E_{1,j} + E_{2,j}.
\end{align*}
Let us assume without loss of generality that $r \ge 2$; indeed, it is enough to adjust later the constant $K$ in \eqref{e:EIsj} for $r = 1$, if needed. In this way, we make sure that there is at most one singularity in each interval $[\tilde s_n, \tilde s_{n+1}]$. Suppose that $s_j\in(0, \pi / 2)$. Then, in the integral in $E_{2,j}$, $\eta - s_j > 0$, so there is only one singularity in that integral: that at $\eta = \pi$, i.e., $(\pi - \eta)^\beta$, with $\beta > 0$. Furthermore, after applying the change of variable $\eta \longleftrightarrow \pi - \eta$,
\begin{align*}
E_{2,j} & = \int_0^{\pi/2}\eta^\beta|\pi - \eta - s_j|^\gamma g_{2,j}(\pi - \eta)d\eta - \frac1{h_r}\sum_{n = 0}^{rN-1}g_{2,j}(\pi - s_{n+1/2})\frac{\tilde s_{n+1}^{\beta+1} - \tilde s_n^{\beta + 1}}{\beta + 1}
\cr
& \quad \cdot\frac{\sgn(\pi - \tilde s_n - s_j)|\pi - \tilde s_n - s_j|^{\gamma + 1} - \sgn(\pi - \tilde s_{n+1} - s_j)|\pi - \tilde s_{n+1} - s_j|^{\gamma + 1}}{\gamma + 1},
\end{align*}
so, by Lemma \ref{lemma:E1}, since $\beta > 0$, there  exists $\tilde K > 0$, such that $|E_{2,j}| \le \tilde K / (rN) = K / r^2$. On the other hand, in $E_{1,j}$, we decompose provisionally the integration domain in three pieces: $[0, \pi/2] = [0, s_j - \pi / (4N)] \cup [s_j - \pi / (4N), s_j + \pi / (4N)] \cup [s_j + \pi / (4N), \pi/2]$.
Then, reasoning as in \eqref{e:sjsnr},
\begin{equation*}
\tilde s_n = s_j \pm \frac{\pi}{4N} \Longrightarrow \frac{\pi n}{2rN} = \frac{(2j+1)\pi}{2N} \pm \frac{\pi}{4N}  \Longrightarrow n = (2j + 1)r \pm \frac{r}{2}.
\end{equation*}
However, $n$ must be an integer, so we take for instance the floor function of $r / 2$. Denoting $n_1$ and $n_2$ the values corresponding respectively to the plus and minus signs:
\begin{equation}
\label{e:n1n2}
\begin{split}
n_1 & = (2j + 1)r - \left\lfloor\frac{r}{2}\right\rfloor \Longrightarrow \tilde s_{n_1} = \frac{\pi}{2rN}\left((2j + 1)r - \left\lfloor\frac{r}{2}\right\rfloor\right)  = s_j - h_r\left\lfloor\frac{r}{2}\right\rfloor,
	\cr
n_2 & = (2j + 1)r + \left\lfloor\frac{r}{2}\right\rfloor \Longrightarrow \tilde s_{n_2} = \frac{\pi}{2rN}\left((2j + 1)r + \left\lfloor\frac{r}{2}\right\rfloor\right) =  s_j + h_r\left\lfloor\frac{r}{2}\right\rfloor,
\end{split}
\end{equation}
so the actual decomposition of the integration domain is $[0,\pi/2] = [0, \tilde s_{n_1}] \cup [\tilde s_{n_1}, \tilde s_{n_2}] \cup [\tilde s_{n_2}, \pi/2]$; note also that $n_1\in\mathcal O(r)$ and $n_2\in\mathcal O(r)$. Now, let us decompose $E_{1,j}$ in three pieces, corresponding to those three intervals:
\begin{align*}
E_{1,j} & = \Bigg(\int_0^{\tilde s_{n_1}} \eta^\beta|\eta - s_j|^\gamma g_{1,j}(\eta)d\eta - \frac1{h_r}\sum_{n = 0}^{n_1 - 1}g_{1,j}(\tilde s_{n+1/2})\frac{\tilde s_{n+1}^{\beta+1} - \tilde s_n^{\beta + 1}}{\beta + 1}
\cr
& \qquad\cdot\frac{\sgn(\tilde s_{n+1} - s_j)|\tilde s_{n+1} - s_j|^{\gamma + 1} - \sgn(\tilde s_{n} - s_j)|\tilde s_{n} - s_j|^{\gamma + 1}}{\gamma + 1}\Bigg)
	\cr
& \quad + \Bigg(\int_{s_{n_1}}^{s_{n_2}} \eta^\beta|\eta - s_j|^\gamma g_{1,j}(\eta)d\eta - \frac1{h_r}\sum_{n = n_1}^{n_2-1}g_{1,j}(\tilde s_{n+1/2})\frac{\tilde s_{n+1}^{\beta+1} - \tilde s_n^{\beta + 1}}{\beta + 1}
\cr
& \qquad
\cdot\frac{\sgn(\tilde s_{n+1} - s_j)|\tilde s_{n+1} - s_j|^{\gamma + 1} - \sgn(\tilde s_{n} - s_j)|\tilde s_{n} - s_j|^{\gamma + 1}}{\gamma + 1}\Bigg)
	\cr
& \quad + \Bigg(\int_{s_{n_2}}^{\pi/2} \eta^\beta|\eta - s_j|^\gamma g_{1,j}(\eta)d\eta - \frac1{h_r}\sum_{n = n_2}^{rN-1}g_{1,j}(\tilde s_{n+1/2})\frac{\tilde s_{n+1}^{\beta+1} - \tilde s_n^{\beta + 1}}{\beta + 1}
\cr
& \qquad
\cdot\frac{\sgn(\tilde s_{n+1} - s_j)|\tilde s_{n+1} - s_j|^{\gamma + 1} - \sgn(\tilde s_{n} - s_j)|\tilde s_{n} - s_j|^{\gamma + 1}}{\gamma + 1}\Bigg)
	\cr
& = E_{1,j}^a + E_{1,j}^b + E_{1, j}^c.
\end{align*}
In $E_{1,j}^a$, the only singularity in its integral is located at $\eta = 0$, i.e., $|\eta|^\beta$, with $\beta > 0$,  and in $E_{1,j}^c$, the integral has no singularity, so by Lemma \ref{lemma:E1} and Remark \ref{r:abph}, there exists $K > 0$, such that $|E_{1,j}^a| \le K / r^2$ and $|E_{1,j}^c| \le K / r^2$. With respect to $E_{1,j}^b$, we perform the change of variable $\eta \longleftrightarrow \eta + s_j$:
\begin{align*}
E_{1,j}^b & =  \int_{-h_r\lfloor r/2\rfloor}^{h_r\lfloor r/2\rfloor} (\eta+s_j)^\beta|\eta|^\gamma g_{1,j}(\eta + s_j)d\eta - \frac1{h_r}\sum_{n = -\lfloor r/2\rfloor}^{\lfloor r/2\rfloor-1}g_{1,j}(\tilde s_{n+1/2} + s_j)
	\cr
& \quad\cdot
\frac{(\tilde s_{n+1} + s_j)^{\beta+1} - (\tilde s_n + s_j)^{\beta + 1}}{\beta + 1}\frac{\sgn(\tilde s_{n+1})|\tilde s_{n+1}|^{\gamma + 1} - \sgn(\tilde s_{n})|\tilde s_{n}|^{\gamma + 1}}{\gamma + 1}.
\end{align*}
Then, since $\eta + s_j \not\in[-h_r\lfloor r/2\rfloor, h_r\lfloor r/2\rfloor]$, the only singularity in the integral is located at $\eta = 0$, i.e., $|\eta|^\gamma$, with $\gamma > -1$.  Therefore, again by Lemma \ref{lemma:E1} and Remark \ref{r:abph}, there exists $K > 0$, such that $|E_{2,1}| \le K / r^2$. Since $|E_{1,j}^a|$, $|E_{1,j}^b|$, $|E_{1,j}^c|$ and $|E_{2,j}|$ are all bounded by $K / r^2$, it follows that $|E_j| \le K / r^2$, too.  

Suppose now that $s_j\in(\pi / 2, \pi)$. Then, the proof that $|E_j| \le K / r^2$ is identical to that when $s_j\in(0, \pi / 2)$. $E_{1,j}$ poses no problems, and, to bound $E_{2,j}$, we define $n_1$ and $n_2$ as in \eqref{e:n1n2}, and decompose the integration domain $[\pi/2, \pi]$ (with no change of variable) in three pieces: $[\pi/2, \pi] = [\pi/2, \tilde s_{n_1}] \cup [\tilde s_{n_1}, \tilde s_{n_2}] \cup [\tilde s_{n_2}, \pi]$, and write $E_{2,j} = E_{2, j}^a + E_{2, j}^b + E_{2, j}^c$, where each term is associated to its respective subinterval. The rest of the details are absolutely identical, so we do not repeat them again.

In order to conclude the proof, there is one last case to consider, when $s_j = \pi / 2$, which can only happen when $N$ is odd, and $j = (N -1)/2$ and for which we need to reason in a different way. To deal with that case,  it is straightforward to check that $E_j$ in \eqref{e:EIsj} becomes
\begin{align*}
	E_j & = \int_0^{\pi/2} \sin^\beta(\eta)\sin^\gamma(\pi/2 - \eta ) (f(\eta) + f(\pi - \eta))d\eta
	\cr
	& \quad - 
	\frac1{h_r}\sum_{n = 0}^{rN-1}\left(\frac{\sin(\tilde s_{n+1/2})}{\tilde s_{n+1/2}}\right)^\beta\left(\frac{\sin(\pi/2-\tilde s_{n+1/2})}{\pi/2-\tilde s_{n+1/2}}\right)^\gamma\frac{\tilde s_{n+1}^{\beta+1} - \tilde s_n^{\beta + 1}}{\beta + 1}
	\cr
	& \qquad
	\cdot\frac{(\pi/2 - \tilde s_n)^{\gamma + 1} - (\pi/2-\tilde s_{n+1})^{\gamma + 1}}{\gamma + 1}(f(\tilde s_{n+1/2}) + f(\pi - \tilde s_{n+1/2})).
\end{align*}
This suggests defining
\begin{align*}
g_a(s) & = \left(\frac{\sin(s)}{s}\right)^\beta\left(\frac{\sin(\pi/2 - s)}{\pi/2 - s}\right)^\gamma (f(s) + f(\pi - s)),
	\cr
g_b(s) & = \left(\frac{\sin(s)}{s}\right)^\gamma\cos^\beta(s) (f(\pi/2 - s) + f(\pi/2 + s)),
\end{align*}
which have respectively the same regularity as $f(s) + f(\pi - s)$ and $f(\pi/2 - s) + f(\pi/2 + s)$ on $s\in[0, \pi/2]$.  Let us divide  the integration domain $[0, \pi/2]$ in two subintervals of equal or approximately equal length:
$$
\tilde s_n = \frac\pi4 \Longrightarrow \frac{\pi n}{2rN} = \frac\pi4  \Longrightarrow n = \frac{rN}{2} \Longrightarrow n_a = \left\lceil\frac{rN}{2} \right\rceil\text{ and } \tilde s_{n_a} = h_rn_a\ge\frac\pi4.
$$
Then, denoting $n_b = rN - n_a$, $\tilde s_{n_b} = \pi/2 - \tilde s_{n_a} \le \pi/4$,
\begin{align*}
E_j & = \Bigg(\int_0^{\tilde s_{n_a}} \eta^\beta(\pi/2 - \eta)^\gamma g_a(\eta)d\eta - 
	\frac1{h_r}\sum_{n = 0}^{n_a}\frac{\tilde s_{n+1}^{\beta+1} - \tilde s_n^{\beta + 1}}{\beta + 1}
\cr
& \qquad \cdot	
	\frac{(\pi/2 - \tilde s_n)^{\gamma + 1} - (\pi/2-\tilde s_{n+1})^{\gamma + 1}}{\gamma + 1}g_a(\tilde s_{n+1/2})\Bigg)
 + \Bigg(\int_0^{\tilde s_{n_b}} \eta^\gamma g_b(\eta)d\eta	
	\cr
	& \quad - \sum_{n = 0}^{n_b-1}\frac{\tilde s_{n+1}^{\gamma + 1}
		- \tilde s_{n}^{\gamma + 1}}{\gamma + 1}\frac{(\pi/2-\tilde s_{n})^{\beta+1} - (\pi/2-\tilde s_{n+1})^{\beta+1}}{(\beta + 1)h_r(\pi/2 - \tilde s_{n+1/2})^\beta}
g_b(\tilde s_{n+1/2})\Bigg)
\cr
& = E_a + E_b.
\end{align*}
$E_a$ has only one singularity at the integral, located at $\eta = 0$, namely  $\eta^\beta$, with $\beta > 0$, so by Lemma \ref{lemma:E1} and Remark \ref{r:abph}, there exists $K > 0$, such that $|E_a| \le K / r^2$. On the other hand, in $E_b$,  where we have performed the change of variable $\eta \longleftrightarrow \pi/2 - \eta$, there is only one singularity at the integral, located also at $\eta = 0$, namely  $\eta^\gamma$, with $\gamma > -1$. Let us rewrite $E_b$ by subtracting and adding the same term:
\begin{align*}
E_b & = \left(\int_0^{\tilde s_{n_b}} \eta^\gamma g_b(\eta)d\eta - \sum_{n = 0}^{n_b-1}\frac{\tilde s_{n+1}^{\gamma + 1} - \tilde s_{n}^{\gamma + 1}}{\gamma + 1}g_b(\tilde s_{n+1/2})\right)
\cr
& \quad + \left(\sum_{n = 0}^{n_b-1}\frac{\tilde s_{n+1}^{\gamma + 1}
	- \tilde s_{n}^{\gamma + 1}}{\gamma + 1}\left[1 - \frac{(\pi/2-\tilde s_{n})^{\beta+1} - (\pi/2-\tilde s_{n+1})^{\beta+1}}{(\beta + 1)h_r(\pi/2 - \tilde s_{n+1/2})^\beta}\right]
g_b(\tilde s_{n+1/2})\right)
\cr
& = E_{b, 1} + E_{b, 2}.
\end{align*}
With respect to $E_{b, 1}$, since $g_b'(0) = 0$, by Theorem \ref{theo:intab} and Remark \ref{r:abph}, there exists $K$ such that $|E_{b,1}| \le K/r^2$; and, with respect to $E_{b, 2}$, using \eqref{e:quadraturecomparison}, and that there exists $M$ such that $|g_b(s)| \le M$, for all $s\in[0,\pi/2]$ (which is possible, because $g_b(s)$ is continuous on $[0,\pi/2]$),
\begin{align*}
|E_{b,2}| & \le \sum_{n = 0}^{n_b-1}\frac{\tilde s_{n+1}^{\gamma + 1}
	- \tilde s_{n}^{\gamma + 1}}{\gamma + 1}\frac{|g_b(\tilde s_{n+1/2})|}{h_r(\pi/2 - \tilde s_{n+1/2})^\beta}
	\cr
& \quad \cdot \bigg|h_r(\pi/2 - \tilde s_{n+1/2})^\beta 
- \frac{(\pi/2-\tilde s_{n})^{\beta+1} - (\pi/2-\tilde s_{n+1})^{\beta+1}}{\beta + 1}\bigg|
	\cr
& \le \sum_{n = 0}^{n_b-1}\frac{\tilde s_{n+1}^{\gamma + 1}
	- \tilde s_{n}^{\gamma + 1}}{\gamma + 1}\frac{|g_b(\tilde s_{n+1/2})|}{h_r(\pi/2 - \tilde s_{n+1/2})^\beta}Kh_r^3(\pi/2 - \tilde s_{n+1/2})^{\beta-2}
	\cr
& \le Kh_r^2\sum_{n = 0}^{n_b-1}\frac{\tilde s_{n+1}^{\gamma + 1}
	- \tilde s_{n}^{\gamma + 1}}{\gamma + 1}\frac{|g_b(\tilde s_{n+1/2})|}{(\pi/2 - \tilde s_{n+1/2})^{2}}
 \le \frac{KM}{(\pi/2 - \pi/4)^2}\left(\frac{\pi}{2rN}\right)^2\frac{\tilde s_{n_b}^{\gamma+1}}{\gamma+1} \le \frac{\tilde K}{r^2}.
\end{align*}
Therefore, when $\tilde s_n = \pi/2$, there exists also $K > 0$, such that $|E_j| \le K / r^2$.

Finally, we take a $K$ large enough, such that, for $j \in \{0, \ldots, N-1\}$, $|E_j| \le K / r^2$, which concludes the proof of the theorem. \qed
\end{proof}

Now, we can adapt the main result to the particular case of the fractional Laplacian, which gives the following error estimate of the method.
\begin{corollary}  \label{corol:fraclap} Let $u(x) : \mathbb R \to\mathbb C$, such that, after defining $u(s)\equiv u(L\cot(s))$, with $L > 0$, we have that $\sin(s)u_{ss}(s) + 2\cos(s)u_{s}(s)\in\mathcal C^2[0, \pi]$. Let $\alpha \in(0,1)\cup(1,2)$. $N\in\mathbb N$, $r\in\mathbb N$, $h_r = \pi / (2rN)$, $s_j$ as defined in \eqref{e:sj}, and $\tilde s_n$ as defined in \eqref{e:snr}. Let  $(-\Delta)^{\alpha/2}u(s_j)$, as defined in \eqref{e:fraclap0pi2not1}, denote the fractional Laplacian of $u(x)$ at $x = x_j =  L\cot(s_j)$. Define
	\begin{align*}
		A_{1,j} & = \frac1{h_r} \frac{c_{\alpha}|\sin(s)|^{\alpha-1}}{L^\alpha\alpha(1-\alpha)}\sum_{n = 0}^{rN-1}\left(\frac{\sin(\tilde s_{n+1/2})}{\tilde s_{n+1/2}}\right)^\alpha\left(\frac{\sin(\tilde s_{n+1/2}-s_j)}{\tilde s_{n+1/2}- s_j}\right)^{1-\alpha}
		\cr
		& \quad\cdot\frac{\tilde s_{n+1}^{\alpha+1} - \tilde s_n^{\alpha + 1}}{\alpha + 1}\frac{\sgn(\tilde s_{n+1} - s_j)|\tilde s_{n+1} - s_j|^{2 - \alpha} - \sgn(\tilde s_{n} - s_j)|\tilde s_{n} - s_j|^{2 - \alpha}}{2 - \alpha}
		\cr
		& \quad\cdot(\sin(\tilde s_{n+1/2})u_{ss}(\tilde s_{n+1/2}) + 2\cos(\tilde s_{n+1/2})u_{s}(\tilde s_{n+1/2})),
	\end{align*}
	\begin{align*}
		A_{2,j} & = \frac1{h_r} \frac{c_{\alpha}|\sin(s)|^{\alpha-1}}{L^\alpha\alpha(1-\alpha)}\sum_{n = rN}^{2rN-1}\left(\frac{\sin(\tilde s_{n+1/2})}{\pi - \tilde s_{n+1/2}}\right)^\alpha\left(\frac{\sin(\tilde s_{n+1/2}-s_j)}{\tilde s_{n+1/2}- s_j}\right)^{1-\alpha}
		\cr
		& \quad\cdot\frac{(\pi-\tilde s_n)^{\alpha+1} - (\pi-\tilde s_{n+1})^{\alpha + 1}}{\alpha + 1}
		\cr
		& \quad
		\cdot\frac{\sgn(\tilde s_{n+1} - s_j)|\tilde s_{n+1} - s_j|^{2 - \alpha} - \sgn(\tilde s_{n} - s_j)|\tilde s_{n} - s_j|^{2 - \alpha}}{2 - \alpha}
		\cr
		& \quad\cdot(\sin(\tilde s_{n+1/2})u_{ss}(\tilde s_{n+1/2}) + 2\cos(\tilde s_{n+1/2})u_{s}(\tilde s_{n+1/2})).
	\end{align*}
	Then, there exists $K > 0$, such that, for all $k\in\{0, \ldots, N-1\}$,
	\begin{equation*}
		E_j = (-\Delta)^{\alpha/2}u(s_j) - A_{1,j} - A_{2,j} \Longrightarrow |E_j| \le \frac{K}{r^2}.
	\end{equation*}
\end{corollary}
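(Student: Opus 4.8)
The plan is to recognize that \eqref{e:fraclap0pi2not1} is, up to a factor that does not depend on the integration variable, a particular instance of the singular integral \eqref{e:Is0pi} treated in Theorem \ref{theo:int0pibg}, and then to transfer the error bound through that constant factor. This keeps the argument to essentially bookkeeping.

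First I would set $\beta = \alpha$, $\gamma = 1-\alpha$, and $f(\eta) = \sin(\eta)u_{ss}(\eta) + 2\cos(\eta)u_s(\eta)$, so that
$$
(-\Delta)^{\alpha/2}u(s_j) = \frac{c_\alpha|\sin(s_j)|^{\alpha-1}}{L^\alpha\alpha(1-\alpha)}\,I(s_j),
$$
with $I(s_j)$ as in \eqref{e:Isj0pi}. I would then check that the hypotheses of Theorem \ref{theo:int0pibg} hold: since $\alpha\in(0,1)\cup(1,2)$ we have $\beta = \alpha > 0$, and since $\alpha < 2$ we have $\gamma = 1-\alpha > -1$; moreover $f\in\mathcal C^2[0,\pi]$ is precisely the standing assumption of the corollary. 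Theorem \ref{theo:int0pibg} then supplies a constant $K_0 > 0$ such that
$$
\big|I(s_j) - \hat A_{1,j} - \hat A_{2,j}\big| \le \frac{K_0}{r^2}, \qquad j\in\{0,\ldots,N-1\},
$$
where $\hat A_{1,j}$ and $\hat A_{2,j}$ denote the sums \eqref{e:A1j}--\eqref{e:A2j} evaluated with these choices of $\beta$, $\gamma$, $f$ (note that $\gamma+1 = 2-\alpha$, which matches the exponents appearing in the statement).

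Next I would observe that the quantities $A_{1,j}$ and $A_{2,j}$ in the statement are exactly $\dfrac{c_\alpha|\sin(s_j)|^{\alpha-1}}{L^\alpha\alpha(1-\alpha)}\hat A_{1,j}$ and $\dfrac{c_\alpha|\sin(s_j)|^{\alpha-1}}{L^\alpha\alpha(1-\alpha)}\hat A_{2,j}$, so that
$$
E_j = \frac{c_\alpha|\sin(s_j)|^{\alpha-1}}{L^\alpha\alpha(1-\alpha)}\big(I(s_j) - \hat A_{1,j} - \hat A_{2,j}\big).
$$
The only point needing a little care is the uniform boundedness of the prefactor over $j$: this is immediate when $\alpha\in[1,2)$ because $|\sin(s_j)|^{\alpha-1}\le 1$, and when $\alpha\in(0,1)$ it follows from the fact that, for fixed $N$, the nodes $s_j = (2j+1)\pi/(2N)$ stay away from $0$ and $\pi$, so $|\sin(s_j)|\ge\sin(\pi/(2N)) > 0$ and hence $|\sin(s_j)|^{\alpha-1}\le\sin^{\alpha-1}(\pi/(2N))$. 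Setting $C^\ast = c_\alpha\max\{1,\sin^{\alpha-1}(\pi/(2N))\}/(L^\alpha\alpha|1-\alpha|)$ and $K = C^\ast K_0$ then gives $|E_j|\le K/r^2$ for all $j$, which is the assertion.

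There is no real obstacle here; the argument is a reduction to Theorem \ref{theo:int0pibg}. The only things to watch are the term-by-term identification of the two quadrature sums with those in \eqref{e:A1j}--\eqref{e:A2j}, and the fact that the prefactor depends on $s_j$ — harmless, since $K$ is allowed to depend on $N$, exactly as it already does in Theorem \ref{theo:int0pibg}.
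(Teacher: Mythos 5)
Your proposal is correct and follows essentially the same route as the paper: the paper's proof of Corollary \ref{corol:fraclap} is precisely the one-line reduction to Theorem \ref{theo:int0pibg} with $\beta=\alpha$, $\gamma=1-\alpha$, $f(s)=\sin(s)u_{ss}(s)+2\cos(s)u_s(s)$, followed by multiplication by the prefactor $c_\alpha|\sin(s_j)|^{\alpha-1}/(L^\alpha\alpha(1-\alpha))$. Your extra remark on the uniform (in $j$, for fixed $N$) boundedness of that prefactor is a sensible detail the paper leaves implicit, but it does not change the argument.
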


\begin{proof} It is enough to apply Theorem \ref{theo:int0pibg} to $f(s) = \sin(s)u_{ss}(s) + 2\cos(s)u_{s}(s)$, taking $\beta = \alpha > 0$ and $\gamma = 1 - \alpha > -1$, and, for each $s_j$, multiply the result by $c_{\alpha}|\sin(s_j)|^{\alpha-1} / (L^\alpha\alpha(1-\alpha))$. \qed
\end{proof}

Note that it is possible to compute directly $A_{1,j}$ and $A_{2,j}$ in Theorem~\ref{theo:int0pibg} and Corollary~\ref{corol:fraclap}, with one single caveat: since the computer uses internally a floating system  to represent the numbers, when $s_n - s_j = 0$ or $s_{n+j} - s_j = 0$, it could happen that the computer stores the results not as being exactly zero, but as infinitesimal numbers, so, in those cases, $\sgn(\tilde s_n - s_j)\not = 0$, and $\sgn(\tilde s_{n+1} - s_j)\not = 0$, which could have disastrous effects in the accuracy of the results. However, this problem is easily avoided, by doing the following:
\begin{equation}
\label{e:sgnsnsj}
\begin{split}
\tilde s_n - s_j =  h_r(n - (2j+1)r) & \Longrightarrow \sgn(\tilde s_n - s_j) \equiv \sgn(n - (2j+1)r),
\cr
\tilde s_{n+1} - s_j =  h_r(n+1 - (2j+1)r) & \Longrightarrow \sgn(\tilde s_{n+1} - s_j) \equiv \sgn(n + 1 - (2j+1)r),
\end{split}
\end{equation}
so the cases $s_n - s_j = 0$ or $s_{n+j} - s_j = 0$ are accurately detected.

On the other hand, we must compute $N$ occurrences of both $A_{1,j}$ and $A_{2,j}$, which requires a number of operations of the order of $\mathcal O(rN^2)$, and becomes computationally expensive, even for not too large values of $N$. In order to minimize this problem, we explain in the following section how to reduce the computational cost to just $\mathcal O(rN\log(N))$ operations, which will allow applying the numerical method to very large values of $N$.

\section{Numerical convolutions and the approximation of singular integrals}

\label{s:numericalconvo}

\subsection{The fast convolution technique}

Let us consider two $N$-periodic sequences of complex numbers $u = \{u_m\}$ and $v = \{v_m\}$, with $m\in\mathbb Z$. Then, the convolution of $u$ and $v$, denoted by $u\ast v$, is a new sequence defined as
\begin{equation}
\label{e:defconf}
(u\ast v)_{m} \equiv \sum\limits_{n=0}^{N-1}u_{n}v_{m-n},\quad m\in\mathbb Z.
\end{equation}
It is straightforward to check that $u\ast v$ is also $N$-periodic, and that $u\ast v = v\ast u$. Moreover, given an $N$-periodic sequence $u$, its discrete Fourier transform $\widehat u$, defined as
\begin{equation*}
\widehat{u}_{p} = \sum\limits_{m=0}^{N-1}u_{m}e^{-\frac{2\pi i m p}{N}} \Longleftrightarrow u_m = \frac{1}{N}\sum\limits_{p=0}^{N-1}\widehat{u}_{p}e^{\frac{2\pi i m p}{N}},
\end{equation*}
is also $N$-periodic. Then, a very important property of $u\ast v$ is that
\begin{equation}
\label{e:convprop}
(\widehat{u\ast v})_{p} = \widehat{u}_{p}\widehat{v}_{p}, \quad p=0,\ldots,N-1,
\end{equation}
i.e, the discrete Fourier transform of the convolution is just the product of the discrete Fourier transforms. This is a well-known theorem called the discrete convolution theorem, and its proof is straightforward (see for instance \cite{Garcia-Cervera2007}):
\begin{align*}
(\widehat{u\ast v})_{p} & = \sum_{m=0}^{N-1}\left[\sum\limits_{n=0}^{N-1}u_{n}v_{m-n}\right]e^{-\frac{2\pi i m p}{N}}
= \sum_{n=0}^{N-1}u_{n}e^{-\frac{2\pi i n p}{N}}\sum\limits_{m=0}^{N-1}v_{m-n}e^{-\frac{2\pi i (m-n) p}{N}}
	\\
& = \sum_{n=0}^{N-1}u_{n}e^{-\frac{2\pi i n p}{N}}\sum\limits_{q=-n}^{N-n-1}v_{q}e^{-\frac{2\pi i q p}{N}} = \sum_{n=0}^{N-1}u_{n}e^{-\frac{2\pi i n p}{N}}\sum\limits_{q=0}^{N-1}v_{q}e^{-\frac{2\pi i q p}{N}} = \widehat u_p\widehat v_p,
\end{align*}
where we have used that $v_q$ and $e^{-2\pi i qp/N}$ are $N$-periodic.

Observe that we need $\mathcal O(N^2)$ operations to compute directly \eqref{e:defconf}, whereas the computation of a discrete Fourier transform by means of the fast Fourier transform algorithm \cite{FFT} requires $\mathcal O(N\log(N))$ operations. Since we need two fast Fourier transforms (FFT) and one inverse fast Fourier transform (IFFT) to compute \eqref{e:defconf} using \eqref{e:convprop}, the total cost is also $\mathcal O(N\log(N))$ operations, which is much lower than $\mathcal O(N^2)$. Therefore, this technique can be referred to as fast convolution.

On the other hand, given $N_a \in \mathbb N$ and $N_b \in \mathbb N$, if we want to compute a convolution of the form
\begin{equation}
\label{e:nonperiodicconv}
(u\ast v)_{m} \equiv \sum\limits_{n=0}^{N_a-1}u_{n}v_{m-n}, \quad m \in\{0, \ldots, N_b - 1\},
\end{equation}
i.e., only for a finite number of values, and where, e.g., $u$ or $v$ are not periodic, it is not possible to apply directly the fast convolution technique. Instead, after observing that, in order to compute \eqref{e:nonperiodicconv}, we need the values $\{u_0, \ldots, u_{N_a-1}\}$ and $\{v_{-N_a+1}, \ldots, v_{N_b-1}\}$, we take $N\in\mathbb N$, such that $N \ge N_a + N_b - 1$, and extend the sequences $\{u_m\}$ and $\{v_m\}$ to two $N$-periodic sequences as follows:
\begin{equation}
	\label{e:utilden}
	\tilde u_m =
	\begin{cases}
		u_m, & \text{if } m\in\{0, \ldots, N_a-1\},
			\\
		0, & \text{if } N_b \ge 2 \text{ and } m \in \{N_a, \ldots, N-1\},
	\end{cases}
\end{equation}
and
\begin{equation}
	\label{e:vtilden}
	\tilde v_m = \begin{cases}
		v_{m}, & \text{if } m\in\{0,\ldots, N_b-1\}, \\
		0, & \text{if } N \ge N_a + N_b \text{ and } m\in\{N_b,\ldots, N - N_a\}, \\
		v_{m-N}, & \text{if }  N_a \ge  2  \text{ and } m\in\{N-N_a+1,\ldots, N-1\};
	\end{cases}
\end{equation}
for other values of $m$, we simply impose $\tilde u_{m + N} = \tilde u_{m}$ and $\tilde v_{m + N} = \tilde v_{m}$. Then, we have trivially
\begin{equation}
	\label{e:uastv}
	(u\ast v)_{m} \equiv \sum\limits_{n=0}^{N_a-1}u_{n}v_{m-n} = \sum\limits_{n=0}^{N-1}\tilde u_{n}\widetilde v_{m-n} = (\tilde u\ast \tilde v)_{m}, \quad m = 0, \ldots, N_b-1,
\end{equation}
so, in order to obtain $u\ast v$, we extend $u$ and $v$ by means of \eqref{e:utilden} and \eqref{e:vtilden}, then compute $\tilde u\ast\tilde v$ via the fast convolution, and finally keep the first $N_b$ elements of $\tilde u\ast\tilde v$, and simply discard the last $N - N_b$ elements. The global computation cost is again of $\mathcal O(N\log(N))$.

In general, even if it is possible to choose exactly $N = N_a + N_b - 1$, it might be convenient to choose larger values of $N$, because the FFT implementations typically work better when the factorization of $N$ consists of small primes (ideally, powers of $2$). For instance, in \cite{Garcia-Cervera2007}, a similar algorithm was proposed, with $\{u_m\}$ and $\{v_m\}$ consisting of $N$ elements, and their respective extensions $\{\tilde u_m\}$ and $\{\tilde v_m\}$ being $2N$-periodic, instead of $2N-1$-periodic.

\subsection{Numerical approximation of the singular integral \eqref{e:Isj0pi}  by means of the fast convolution}

In order to approximate numerically \eqref{e:Isj0pi}, from which the fractional Laplacian \eqref{e:fraclap0pi2not1} is a particular case, we need to obtain \eqref{e:A1j} and \eqref{e:A2j}, which must be expressed in a suitable way, in order to be able to apply the fast convolution.

In \eqref{e:A1j}, bearing in mind \eqref{e:sgnsnsj}, we replace $s_j$, $\tilde s_n$, $\tilde s_{n+1/2}$ and $\tilde s_{n+1}$ by their definitions, namely $s_j = h_r(2j+1)r$, $\tilde s_n = h_rn$, $\tilde s_{n+1/2} = h_r(n+1/2)$ and $\tilde s_{n+1} = h_r(n+1)$, to obtain
\begin{align*}
A_{1,j} & = \frac{h_r^{\beta + \gamma + 1}}{(\beta + 1)(\gamma + 1)}\sum_{n = 0}^{rN-1}\left(\frac{\sin(h_r(n+1/2))}{h_r(n+1/2)}\right)^\beta\left(\frac{\sin(h_r(n+1/2 - (2j+1)r))}{h_r(n+1/2 - (2j+1)r)}\right)^\gamma
\cr
& \quad\cdot[(n+1)^{\beta+1} - n^{\beta + 1}][\sgn(n+1 - (2j+1)r)|n+1 - (2j+1)r|^{\gamma + 1}
\cr
& \qquad - \sgn(n - (2j+1)r)|n - (2j+1)r|^{\gamma + 1}]f(h_r(n+1/2)).
\end{align*}
In this form, we cannot apply the fast convolution directly, because, whenever $n$ and $j$ appear simultaneously, they do it in the form of $n-2rj$. Therefore, we decompose $n = 2rl + q$, where $l$ and $q$ are nonnegative integers, and $0\le q\le 2r-1$, so $A_{1,j}$ becomes
\begin{align*}
A_{1,j} & = \frac{h_r^{\beta + \gamma + 1}}{(\beta + 1)(\gamma + 1)}\sum_{q = 0}^{2r-1}\sum_{l = 0}^{\lceil N/2\rceil-1}\chi_{[0,rN-1]}(2rl+q)\left(\frac{\sin(h_r(q+1/2 + 2rl))}{h_r(q+1/2 + 2rl)}\right)^\beta
	\cr
	& \quad\cdot\left(\frac{\sin(h_r(q+1/2 - r- 2r(j-l)))}{h_r(q+1/2 - r - 2r(j-l))}\right)^\gamma [(q+1 + 2rl)^{\beta+1} - (q + 2rl)^{\beta + 1}]
	\cr
& \quad \cdot  [\sgn(q+1 - r - 2r(j - l))|q+1 - r - 2r(j-l)|^{\gamma + 1}
	\cr
& \qquad - \sgn(q - r - 2r(j-l))|q - r - 2r(j - l)|^{\gamma + 1}]f(h_r(q+1/2 + 2rl)),
\end{align*}
where
$$
\chi_{[0,rN-1]}(x) =
\begin{cases}
1, & x\in[0,rN-1],
\\
0, & x\not\in[0,rN-1].
\end{cases}
$$
Then, defining
\begin{align*}
K_1(m, q) & = \left(\frac{\sin(h_r(q+1/2 + 2rm))}{h_r(q+1/2 + 2rm)}\right)^\beta
\cr
& \quad\cdot[(q+1 + 2rm)^{\beta+1} - (q + 2rm)^{\beta + 1}]f(h_r(q+1/2+2rm)),
	\cr
L_1(m, q) & = \left(\frac{\sin(h_r(q+1/2 - r- 2rm))}{h_r(q+1/2 - r - 2rm)}\right)^\gamma
 [\sgn(q+1 - r - 2rm)|q+1 - r - 2rm|^{\gamma + 1}
\cr
& \qquad - \sgn(q - r - 2rm)|q - r - 2rm|^{\gamma + 1}],
\end{align*}
we can write
\begin{equation}
\label{e:A1jdoublesum}
A_{1, j} = \frac{h_r^{\beta + \gamma + 1}}{(\beta + 1)(\gamma + 1)}\sum_{q = 0}^{2r-1}\sum_{l = 0}^{\lceil N/2\rceil-1}\chi_{[0,rN-1]}(2rl+q)K_1(l,q)L_1(j-l,q).
\end{equation}
Likewise, we can express \eqref{e:A2j} in an equivalent way to \eqref{e:A1jdoublesum}. In that case, we replace the appearances of $\pi$ by $h_r2rN$, apply \eqref{e:sgnsnsj} again, and make $n$ run between $0$ and $rN-1$, instead of between $rN$ and $2rN-1$, for which we need to replace $n$ by $rN + n$ in the subscripts, i.e., we substitute $\tilde s_n$ by $\tilde s_{rN+n}$, etc. Afterward, we expand $s_j = h_r(2j+1)r$, $\tilde s_{rN+n} = h_r(rN+n)$, $\tilde s_{rN+n+1/2} = h_r(rN+n+1/2)$ and $\tilde s_{rN+n+1} = h_r(rN+n+1)$, obtaining
\begin{align*}
A_{2,j} & = \frac{h_r^{\beta+\gamma+1}}{(\beta+1)(\gamma+1)}\sum_{n = 0}^{rN-1}\left(\frac{\sin(h_r(rN+n+1/2))}{h_r(rN-n-1/2)}\right)^\beta
\cr
& \quad\cdot\left(\frac{\sin(h_r(rN+n+1/2-(2j+1)r)}{h_r(rN+n+1/2- (2j+1)r}\right)^\gamma[(rN-n)^{\beta+1} - (rN-n-1)^{\beta + 1}]
\cr
& \quad
\cdot[\sgn(rN+n+1-(2j+1)r)|rN+n+1-(2j+1)r|^{\gamma + 1} 
\cr
& \qquad - \sgn(rN+n-(2j+1)r)|rN+n-(2j+1)r|^{\gamma + 1}]f(h_r(rN+n+1/2)).
\end{align*}
Again, decomposing $n = 2rl+q$,
\begin{align*}
	A_{2,j} & = \frac{h_r^{\beta+\gamma+1}}{(\beta+1)(\gamma+1)}\sum_{q = 0}^{2r-1}\sum_{l = 0}^{\lceil N/2\rceil-1}\chi_{[0,rN-1]}(2rl+q)
	\cr
	& \quad\cdot\left(\frac{\sin(h_r(rN+q+1/2+2rl))}{h_r(rN-q-1/2-2rl)}\right)^\beta\left(\frac{\sin(h_r(rN+q+1/2-r-2r(j-l))}{h_r(rN+q+1/2-r-2r(j-l)}\right)^\gamma
	\cr
	& \quad \cdot [(rN-q-2rl)^{\beta+1} - (rN-q-1-2rl)^{\beta + 1}]
	\cr
	& \quad
	\cdot[\sgn(rN+q+1-r-2r(j-l))|rN+q+1-r-2r(j-l)|^{\gamma + 1} 
	\cr
	& \qquad - \sgn(rN+q-r-2r(j-l))|rN+q-r-2r(j-l)|^{\gamma + 1}]
	\cr
	& \quad\cdot f(h_r(rN+2rl+q+1/2)).
\end{align*}
Then, defining
\begin{align*}
	K_2(m, q) & = \left(\frac{\sin(h_r(rN+q+1/2+2rm))}{h_r(rN-q-1/2-2rm)}\right)^\beta
	\cr
	& \quad \cdot [(rN-q-2rm)^{\beta+1} - (rN-q-1-2rm)^{\beta + 1}]
f(h_r(rN+2rm+q+1/2)),
	\cr
	L_2(m, q) & = \left(\frac{\sin(h_r(rN+q+1/2-r-2rm)}{h_r(rN+q+1/2-r-2rm}\right)^\gamma
\cr
& \quad
\cdot[\sgn(rN+q+1-r-2rm)|rN+q+1-r-2rm|^{\gamma + 1} 
\cr
& \qquad - \sgn(rN+q-r-2rm)|rN+q-r-2rm|^{\gamma + 1}],
\end{align*}
we can write
\begin{equation}
\label{e:A2jdoublesum}
A_{2, j} = \frac{h_r^{\beta + \gamma + 1}}{(\beta + 1)(\gamma + 1)}\sum_{q = 0}^{2r-1}\sum_{l = 0}^{\lceil N/2\rceil-1}\chi_{[0,rN-1]}(2rl+q)K_2(l,q)L_2(j-l,q).
\end{equation}
Note that, obviously, $K_1$, $L_1$, $K_2$ and $L_2$ depend on more parameters than $m$ and $q$, but only the variables $m$ and $q$ are relevant for the implementation of the algorithm. In order to proceed further, we bear in mind that, when $N$ is even, $\lceil N/2\rceil = N/2$, whereas, when $N$ is odd, $\lceil N/2\rceil = (N+1)/2$. Hence, when $N$ is even, for the range of values of $l$ and $q$ considered,  $\chi_{[0,rN-1]}(2rl+q) \equiv 1$ and can be omitted, whereas, when $N$ is odd, $\chi_{[0,rN-1]}(2rl+q) \equiv 0$ only when $l = \lceil N/2\rceil-1 = (N-1)/2$ and $q \ge r$. Taking this into account, we define
\begin{align*}
A_{1, j}^q & =
\left\{
\begin{aligned}
	& \sum_{l = 0}^{\lceil N/2\rceil-1}K_1(l,q)L_1(j-l,q), & & \text{if } q\in\{0, \ldots, r-1\},
	\cr
	& \sum_{l = 0}^{\lfloor N/2\rfloor-1}K_1(l,q)L_1(j-l,q), & & \text{if } q\in\{r, \ldots, 2r-1\},
\end{aligned}
\right.
\cr
A_{2, j}^q & =
\left\{
\begin{aligned}
	& \sum_{l = 0}^{\lceil N/2\rceil-1}K_2(l,q)L_2(j-l,q), & & \text{if } q\in\{0, \ldots, r-1\},
	\cr
	& \sum_{l = 0}^{\lfloor N/2\rfloor-1}K_2(l,q)L_2(j-l,q), & & \text{if } q\in\{r, \ldots, 2r-1\},
\end{aligned}
\right.
\end{align*}
which match exactly the structure of \eqref{e:uastv}, where $N_a$ in \eqref{e:uastv} is now $\lceil N/2\rceil$, when $0\le q\le r - 1$, and $\lfloor N/2\rfloor$, when $r \le q\le2r-1$; and $N_b$ in \eqref{e:uastv} is now $N$, and therefore, can be computed efficiently by means of the fast convolution technique. Then, the approximation of \eqref{e:Isj0pi} is expressed as
\begin{equation}
\label{e:Isjapprox}
I(s_j) \approx A_{1, j} + A_{2, j} = \frac{h_r^{\beta + \gamma + 1}}{(\beta + 1)(\gamma + 1)}\sum_{q = 0}^{2r-1}(A_{1, j}^q + A_{2, j}^q), \quad j\in\{0, \ldots, N-1\}.
\end{equation}
From an implementation point of view, it is convenient to work with matrices. More precisely, we create four matrices $\tilde{\mathbf K}_1$, $\tilde{\mathbf K}_2$, $\tilde{\mathbf L}_1$, and $\tilde{\mathbf L}_2$, consisting of $2r$ columns and at least $\lceil N/2 \rceil + N - 1$ rows, which is enough for all the values of $q$. Then, taking $q = 0$, we generate the first column of $\tilde{\mathbf K}_1$ and the first column of $\tilde{\mathbf K}_2$ using \eqref{e:utilden}, and the first column of $\tilde{\mathbf L}_1$ and the first column of $\tilde{\mathbf L}_2$ using \eqref{e:utilden}. Likewise, $q = 1$ is chosen to generate the second column of each matrix, $q = 2$, to generate the third one, and so on, until $q = 2r-1$, to generate the last one. At this point, the FFT is applied columnwise to the four matrices, to obtain respectively $\hat{\mathbf K}_1$, $\hat{\mathbf K}_2$, $\hat{\mathbf L}_1$, $\hat{\mathbf L}_2$, and, afterward, the IFFT can be applied columnwise to $\hat{\mathbf K}_1 \circ \hat{\mathbf L}_1 + \hat{\mathbf K}_2 \circ \hat{\mathbf L}_2$ (where $\circ$ denotes the pointwise or Hadamard product), to obtain a matrix $\mathbf A$;  keeping only the first $N$ rows, and adding the elements of each row, we obtain a vector of $N$ elements. However, since the IFFT is a linear operator, we have followed a slightly faster, but equivalent approach: we have added the elements of each row of $\hat{\mathbf K}_1 \circ \hat{\mathbf L}_1 + \hat{\mathbf K}_2 \circ \hat{\mathbf L}_2$, applied the IFFT to the resulting vector, to obtain a new vector $\mathbf A$, and kept the first $N$ rows of $\mathbf A$, to get a vector of $N$ elements. Multiplying that last vector by $h_r^{\beta + \gamma + 1} / ((\beta + 1)(\gamma + 1))$, we get simultaneously the approximation of \eqref{e:Isjapprox}, for $j\in\{0, \ldots, N-1\}$.

With respect to the computational cost, the most expensive part is the computation of the FFT and IFFT. Since each column has a length of $\mathcal O(N)$, the cost of applying the FFT or IFFT to any single column is of $\mathcal O(N\log(N))$ operations, but we are applying the FFT to the columns of four matrices of $2r$ columns, and the IFFT to one column vector, so the global computational cost is of $\mathcal O(rN\log(N))$ operations.

\section{Implementation and numerical experiments}

\label{s:implementation}

In this section we present an implementation of the algorithm in \textsc{Matlab}  \cite{matlab}, which matches exactly all the steps that we have described (although, our implementation does not exclude of course further optimizations).

\subsection{Implementation in \textsc{Matlab}}

We have defined a variable \verb"nrows", to store the number of rows of $\tilde{\mathbf K}_1$, $\tilde{\mathbf K}_2$, $\tilde{\mathbf L}_1$, and $\tilde{\mathbf L}_2$. Even if this value must be at least $\lceil N/2 \rceil + N - 1$, we have chosen it to be the lowest natural number that is a power of $2$ and is larger than or equal to both $\lceil N/2 \rceil + N - 1$ and $2$ (because, otherwise, when $N=1$, $\texttt{nrows}=1$ returns wrong results), i.e., instead of \verb"nrows = ceil(N/2) + N - 1;", we have typed \verb"nrows=max(2^ceil(log2(ceil(N/2)+N-1)),2);". Indeed, when $N$ is very large, this improves noticeably the global execution speed, and this is particularly true when $\lceil N/2\rceil+N-1$ is a large prime number, because the implementations of the FFT are usually less efficient in that case. For instance, if $N = 11184840$, then $\lceil N/2\rceil + N - 1 = 16777259$ is a prime number, but the smallest power of $2$ larger than or equal to $\lceil N/2\rceil + N - 1$, i.e., $\max\{2^{\lceil\log_2(\lceil N/2\rceil+N-1)\rceil},2\} = 33554432$, is approximately twice as large as $16777259$. However, the commands \verb"tic, A = fft(1:16777259); toc" and \verb"tic, A = fft(1:33554432); toc" take respectively about 3.6 seconds and 0.7 seconds to execute, i.e., the latter takes less than a fifth of the elapsed time of the former.

On the other hand, we pass the values $f(\tilde s_{n+1/2}) $ in a column vector $\mathbf F$, rather than the function $f(s)$ itself, which has no remarkable effect in the execution time, and is very useful if for instance the analytical form of $f(s)$ is not known.

\lstinputlisting[style=Matlab-editor, language=Matlab, basicstyle={\footnotesize\ttfamily}, caption = {Numerical approximation of the singular integral \eqref{e:Isj0pi}}]{singularintegral.m}

Note that it is straightforward to use this code to approximate numerically the fractional Laplacian \eqref{e:fraclap0pi2not1}. Indeed, we only need to define $f(s) = \sin(s)u_{ss}(s) + 2\cos(s)u_{s}(s)$, take $\beta = \alpha$ and $\gamma = 1 - \alpha$, invoke the function \verb"singularintegral", and multiply each component of the result \verb"I" by its corresponding factor, which we have simplified slightly, bearing in mind the well-known identities $\Gamma(z)\Gamma(1-z) = \pi / \sin(\pi z)$, for $z\not\in\mathbb Z$, and $\Gamma(z)\Gamma(z+1/2) = 2^{1-2z}\sqrt\pi\Gamma(2z)$, and the fact that we evaluate $(-\Delta)^{\alpha/2}u(s)$ at points $s_j\in(0,\pi)$:
$$
\frac{c_{\alpha}|\sin(s_j)|^{\alpha-1}}{L^\alpha\alpha(1-\alpha)} = \alpha\frac{2^{\alpha-1}\Gamma(1/2+\alpha/2)}{\sqrt{\pi}\Gamma(1-\alpha/2)}\frac{|\sin(s_j)|^{\alpha-1}}{L^\alpha\alpha(1-\alpha)} = \frac{\sin^{\alpha-1}(s_j)}{L^\alpha2\Gamma(2 - \alpha)\cos(\pi\alpha/2)}.
$$
Although the next section will be specifically devoted to the numerical experiments, the following code approximates numerically \eqref{e:fraclap0pi2not1} for $u(s) = e^{2is}$ and $L = 1$ (i.e., for $f(s) = \sin(s)u_{ss}(s) + 2\cos(s)u_{s}(s) = (-4\sin(s) + 2i\cos(s))e^{2is}$), comparing the numerical result \verb"fraclapnum" with the exact result \verb"fraclap" given in Theorem 2.1 of \cite{cayamacuestadelahoz2020},
for $n = 1$:
\begin{equation}
\label{e:deltaa2e2i}
(-\Delta)^{\alpha/2}e^{2is} \equiv (-\Delta)^{\alpha/2}\left(\frac{ix - 1}{ix + 1}\right) = -\frac{2\Gamma(1+\alpha)}{(ix+1)^{1+\alpha}} =  -\frac{2\Gamma(1+\alpha)}{(i\cot(s)+1)^{1+\alpha}}.
\end{equation}
In this specific example, we have chosen $\alpha = 1.3$, $N = 10000019$ and $r = 1$.  Note that the choice of $N$ that we have made (i.e, the smallest prime number larger than ten millions) serves to illustrate that the code works equally well, even if $N$ does not have a particular factorization.

The code takes about 14 seconds to execute, and the discrete $L^2$ and $L^\infty$ norms of the error \verb"fraclapnum - fraclap" are respectively  $5.2215\times10^{-11}$ and $6.9554\times10^{-14}$. In our opinion, these results are very remarkable, especially taking into account the very large (prime) number $N$ of points considered.

\lstinputlisting[style=Matlab-editor, language=Matlab, basicstyle={\footnotesize\ttfamily}, caption = {Numerical approximation of $(-\Delta)^{\alpha/2}e^{2is}\equiv(-\Delta)^{\alpha/2}((ix-1)/(ix+1))$}]{testsingularintegral.m}

\subsubsection{Implementation with $u_s(s)$ and $u_{ss}(s)$ not explicitly given, and $L\not=1$}

\label{s:usussnotexplicit}

Obviously, if possible, it is better to work with the exact analytical form of $f(s)$. However, if for instance we only know the values of $u(x)$ at $x = x_j = L\cot(s_j)$, we are still able to approximate all the values of $f(\tilde s_n)$ and construct the column vector $\mathbf F$, by means of, e.g., a pseudospectral method, following the ideas in \cite{cayamacuestadelahoz2021}. More precisely, after denoting $u(s) \equiv u(x(s))$, we extend $u(s)$ from $s\in[0,\pi]$ to $s\in[0,2\pi]$, which can be done by, e.g., an even extension at $s = \pi$, i.e., by imposing that $f(\pi+s)\equiv f(\pi - s)$, which guarantees that $u(s)$ is continuous and periodic over $s\in[0,2\pi]$. Thus, we represent
\begin{equation}
\label{e:fourierus}
u(s) \approx \sum_{k=-N}^{N-1}\hat{u}(k)e^{iks}, \quad s\in[0,\ 2\pi].
\end{equation}
Evaluating this equation at $s = s_j$ (where the nodes $s_j$ are still defined by \eqref{e:sj}, but now for $j\in\{0, \ldots, 2N-1\}$), and imposing the equality at those nodes (which is at the basis of a pseudospectral method), we get
\begin{align}
\label{e:usjfourier}
	u(s_j) & \equiv \sum\limits_{k=-N}^{N-1}\hat{u}(k)e^{iks_j} = \sum\limits_{k=-N}^{N-1}\hat{u}(k)e^{ik\pi(2j+1)/(2N)}
	\cr
	& = \sum\limits_{k=0}^{N-1}\left[\hat{u}(k)e^{ik\pi/(2N)}\right]e^{2ijk\pi/(2N)}
	\cr
	& \quad + \sum\limits_{k=N}^{2N-1}\left[\hat{u}(k-2N)e^{i(k-2N)\pi/(2N)}\right]e^{2ijk\pi/(2N)},
\end{align}
\noindent so the values $\{u(s_j)\}$ are precisely the inverse discrete Fourier transform of
\begin{align*}
\big\{\hat u(0), & \hat{u}(1)e^{i\pi/(2N)}, \ldots, \hat{u}(N-1)e^{i(N-1)\pi/(2N)}, 
\hat{u}(-N)e^{-iN\pi/(2N)}, \ldots, \hat{u}(-1)e^{-i\pi/(2N)}\big\};
\end{align*}
and, conversely, the Fourier coefficients $\hat u(k)$ are given by the discrete Fourier transform of $\{u(s_0), \ldots, u(s_{N-1})\}$, multiplied by $e^{-ik\pi/(2N)}$, for $k \in\{ 0, \ldots, N-1\}\cup\{ -N, \ldots, -1\}$:
\begin{equation*}
\hat{u}(k) \equiv \frac{e^{-ik\pi/(2N)}}{2N}\sum_{j=0}^{2N-1}u(s_{j})e^{-2ijk\pi/(2N)}.
\end{equation*}
At this point, we apply the so-called Krasny filter \cite{krasny}, i.e., we set to zero all the Fourier coefficients $\hat u(k)$ whose modulus is smaller than a fixed threshold, which in this paper is the epsilon of the machine. Then, from \eqref{e:fourierus}:
\begin{equation}
\label{e:ususs}
u_s(s) \approx \sum_{k=-N}^{N-1}ik\hat{u}(k)e^{iks}, \quad u_{ss}(s) \approx \sum_{k=-N}^{N-1}(ik)^2\hat{u}(k)e^{iks}, \quad s\in[0,\ 2\pi].
\end{equation}
Recall that in $[0, \pi]$, there are $2rN$ points $\tilde s_{n+1/2} = h_r(n+1/2) = \pi(2n+1)/(4rN)$, with $n \in\{0, \ldots, 2rN\}$, so in $[0,2\pi]$, we take $n \in\{0, \ldots, 4rN\}$. Therefore, in order to approximate $u_s(s)$ and $u_{ss}(s)$ at the $4rN$ values of $\tilde s_{n+1/2} = h_r(n+1/2)$, we extend the definition of $\hat{u}(k)$ as follows:
$$
\hat{\tilde u}(k) \equiv
\begin{cases}
\hat{u}(k), & \text{if } k\in\{-N, \ldots, N-1\},
	\cr
0, & \text{if } k\in\{-2rN, \ldots, -N-1\} \cup \{N, \ldots, 2rN-1\}.
\end{cases}
$$
Then, reasoning as in \eqref{e:usjfourier}, we impose the equalities in \eqref{e:ususs} at $\tilde s_{n+1/2}$, to get
\begin{align*}
u_s(\tilde s_{n+1/2}) & \equiv \sum_{k=-N}^{N-1}ik\hat{u}(k)e^{ik\tilde s_{n+1/2}} = \sum_{k=-2rN}^{2rN-1}ik\hat{\tilde u}(k)e^{ik\pi(2n+1)/(4rN)}
	\cr
& = \sum\limits_{k=0}^{2rN-1}\left[ik\hat{\tilde u}(k)e^{ik\pi/(4rN)}\right]e^{2ink\pi/(4rN)}
\cr
& \quad + \sum\limits_{k=2rN}^{4rN-1}\left[i(k-4rN)\hat{\tilde u}(k-4rN)e^{i(k-4rN)\pi/(4rN)}\right]e^{2ink\pi/(4rN)},
\end{align*}
and
\begin{align*}
u_{ss}(\tilde s_{n+1/2}) & \equiv \sum_{k=-N}^{N-1}(ik)^2\hat{u}(k)e^{ik\tilde s_{n+1/2}} = \sum_{k=-2rN}^{2rN-1}(ik)^2\hat{\tilde u}(k)e^{ik\pi(2n+1)/(4rN)}
\cr
& = \sum\limits_{k=0}^{2rN-1}\left[(ik)^2\hat{\tilde u}(k)e^{ik\pi/(4rN)}\right]e^{2ink\pi/(4rN)}
\cr
& \quad + \sum\limits_{k=2rN}^{4rN-1}\left[(i(k-4rN))^2\hat{\tilde u}(k-4rN)e^{i(k-4rN)\pi/(4rN)}\right]e^{2ink\pi/(4rN)},
\end{align*}
\noindent so the values $\{u_s(\tilde s_{n+1/2})\}$ and $\{u_{ss}(\tilde s_{n+1/2})\}$ are respectively the $4Nr$-element inverse discrete Fourier transforms of
\begin{align*}
	\big\{0, & i\hat{\tilde u}(1)e^{i\pi/(4rN)}, \ldots, i(2Nr-1)\hat{\tilde u}(2Nr-1)e^{i(2Nr-1)\pi/(4rN)}, 
	\cr
	& {-i2rN}\hat{\tilde u}(-2rN)e^{-i2rN\pi/(4rN)}, \ldots, -i\hat{\tilde u}(-1)e^{-i\pi/(4rN)}\big\}
\end{align*}
and
\begin{align*}
	\big\{0, & {-\hat{\tilde u}(1)}e^{i\pi/(4rN)}, \ldots, -(2Nr-1)^2\hat{\tilde u}(2Nr-1)e^{i(2Nr-1)\pi/(4rN)}, 
	\cr
	& {-(2rN)^2}\hat{\tilde u}(-2rN)e^{-i2rN\pi/(4rN)}, \ldots, -\hat{\tilde u}(-1)e^{-i\pi/(4rN)}\big\},
\end{align*}
which require $\mathcal O(Nr\log(Nr))$ operations. Finally, we store in the column vector $\mathbf F$ the first $2rN$ values of $f(\tilde s_{n+1/2}) = \sin(\tilde s_{n+1/2})u_{ss}(\tilde s_{n+1/2}) + 2\cos(\tilde s_{n+1/2})u_{s}(\tilde s_{n+1/2})$, i.e., for $n\in\{0, \ldots, 2rN-1\}$.

The implementation in \textsc{Matlab} is again straightforward. In the following example, we approximate numerically the fractional Laplacian of the Gauss error function $\erf(x)$:
\begin{align}
\label{e:fraclaperf}
\erf(x) & = \frac{2}{\sqrt\pi}\int_{0}^{x}e^{-y^2}dy
 \Longrightarrow (-\Delta)^{\alpha/2}\erf(x) = \frac{2^{1+\alpha}}{\pi}\Gamma\left(\frac{1+\alpha}{2}\right)x\,{}_1F_1\left(\frac{1+\alpha}{2}; \frac32; -x^2\right),
\end{align}
where ${}_1F_1$ is Kummer's hypergeometric confluent function. Note that this formula is immediately obtained after introducing \eqref{e:fraclapl1} in \textsc{Mathematica} \cite{mathematica}, where now $u(x) = \erf(x)$ and $u_x(x) = (2/\sqrt\pi)e^{-x^2}$. More precisely, we just type
\begin{verbatim}
Integrate[(Exp[-(x-y)^2]-Exp[-(x+y)^2])/y^a,{y,0,Infinity}]
\end{verbatim}
multiply the result by $(c_\alpha/\alpha)(2/\sqrt\pi)$, and simplify the constants.

We have chosen now $\alpha = 0.9$, $L = 2.1$, $N = 2^{20} = 1048576$, $r = 8$. The numerical method proposed in this paper takes slightly less than 10 seconds to execute, whereas the exact solution (which requires the computation of ${}_1F_1$) needs 2442 seconds. On the other hand, the discrete $L^2$ and $L^\infty$ norms of the error \verb"fraclapnum - fraclap" are respectively $8.1118\times10^{-12}$ and $2.7311\times10^{-14}$. 

With respect to $L$,  the value $L = 2.1$ in this example serves mainly to illustrate the implementation of a case with $L\not=1$, and there are other possible choices of $L$ that give equally good results. Indeed, as said in \cite{cayamacuestadelahoz2021}, although there are some theoretical results \cite{Boyd1982}, the optimal choice of $L$ is a complicate one, because it depends on many factors: number of points, class of functions, type of problem (in the case of the fractional Laplacian, it may depend on  $\alpha$), etc. However, a good working rule of thumb seems to be that the absolute value of a given function at the extreme grid points is smaller than a given threshold.

\lstinputlisting[style=Matlab-editor, language=Matlab, basicstyle={\footnotesize\ttfamily}, caption={Numerical approximation of $(-\Delta)^{\alpha/2}\erf(x)$, with $L\not=1$}]{testsingularintegralL2pi.m}

\section{Numerical experiments}

\label{s:numerical}

We have applied the numerical method to approximate $(-\Delta)^{\alpha/2}(e^{i2s}) = (-\Delta)^{\alpha/2}((ix-1)/(ix+1)) $ and $(-\Delta)^{\alpha/2}\erf(x)$ numerically for different values of $\alpha$, $N$ and $r$, comparing the results with \eqref{e:deltaa2e2i} and \eqref{e:fraclaperf}, respectively. For that purpose, we have run batch versions of the codes offered in Section \ref{s:implementation}; therefore, we have given explicitly $f(s) = \sin(s)u_{ss}(s) + 2\cos(s)u_{s}(s)$ in the former case, and approximated it (choosing $L\not=1$) in the latter case. In order to measure the accuracy of the results, we use the discrete $L^2$ norm of the error (which shows the order of convergence in a slightly clearer way than the discrete $L^\infty$ norm) divided by the square root of the number of points; this normalization of the discrete $L^2$ norm is customary, because it enables to compare vectors of different lengths:
\begin{align}
\label{e:ENra}
E_N^r(\alpha) & = \frac{1}{\sqrt N}\left\|(-\Delta)^{\alpha/2}u - (-\Delta)_{num}^{\alpha/2}u\right\|_2 
	\cr
& = \left[\frac{1}{N}\sum_{j=0}^{N-1}\left|(-\Delta)^{\alpha/2}u(x(s_j)) - (-\Delta)_{\rm num}^{\alpha/2}u(x(s_j)) \right|^2\right]^{\frac12},
\end{align}
where $(-\Delta)_{\rm num}^{\alpha/2}$ denotes the numerical approximation of the fractional Laplacian.

According to Theorem \ref{theo:int0pibg} and Corollary \ref{corol:fraclap}, we have $E_N^r(\alpha) = \mathcal O(1/r^2)$, for all $N$ and for all $\alpha$; in particular, if we multiply $r$ by $2$, the error norm \eqref{e:ENra} will be reduced approximately by $4$. Therefore, in order to confirm this numerically, we have plotted in Figure~\ref{f:orderofconvergence} $\log_2(E_N^{2r}(\alpha) / E_N^r(\alpha))$, for $N = 128$, $r \in\{2^0, 2^1, \ldots, 2^9\}$ and $\alpha \in\{0.01, \ldots 0.99\}\cup\{1.01, \ldots, 1.99\}$. In this and all the other figures in this section, the experiments corresponding to $u(x) = (ix-1)/(ix+1)$ appear on the left-hand side, and those corresponding to $u(x) = \erf(x)$, where we have taking systematically $L = 2.1$, on the right hand side. As we can see, the curves tend to $2$ as $r$ increases, and indeed, in both cases, the curve corresponding to the largest value of $r$, namely, $r = 2^9$, is the closest one to $2$, which corroborates the second order of convergence. Other choices of $N$ and $L$ yield similar results.
\begin{figure}[!htbp]
	\centering
	\includegraphics[width=0.5\textwidth, clip=true]{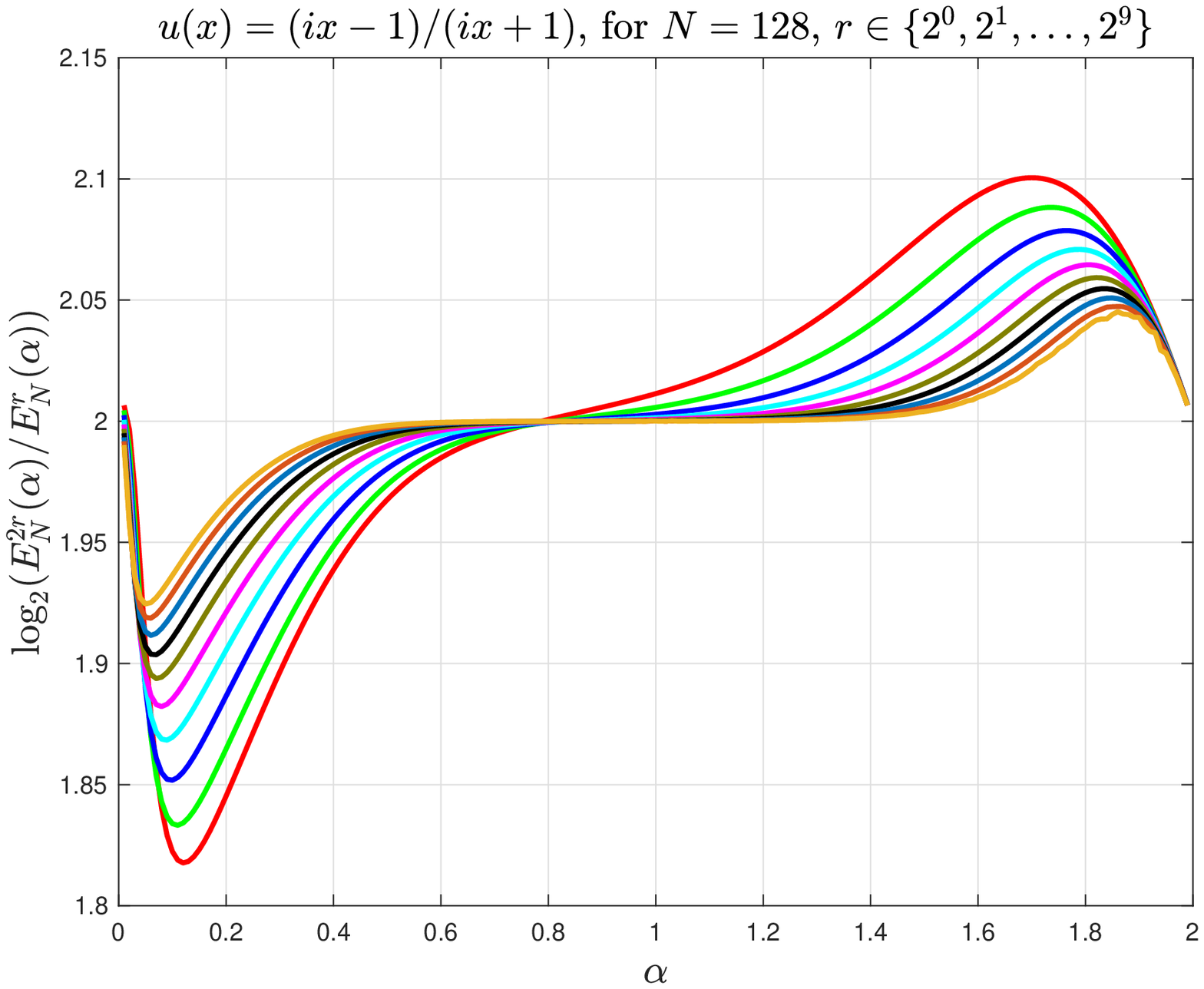}\includegraphics[width=0.5\textwidth, clip=true]{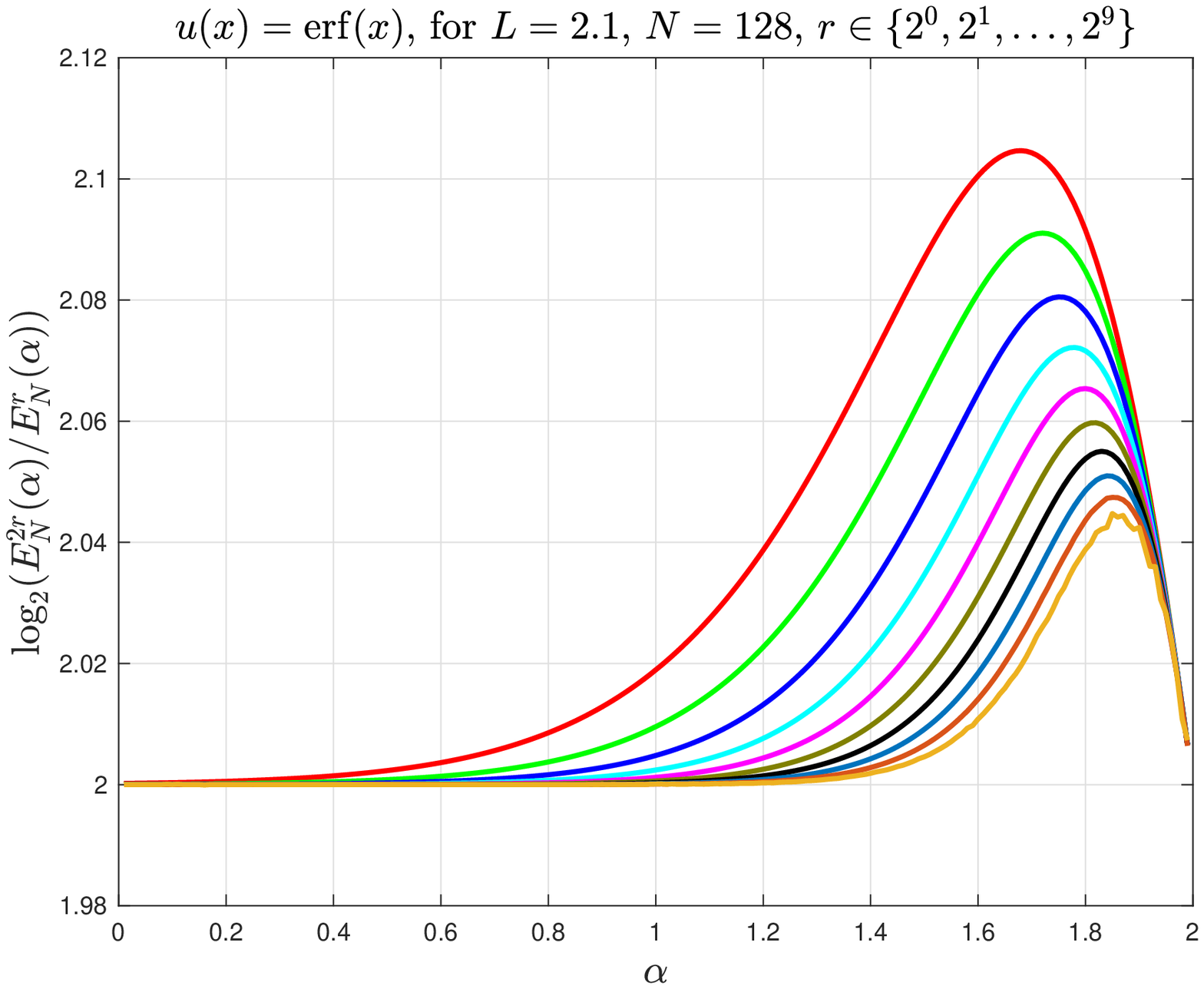}
	\includegraphics[width=\textwidth, clip=true]{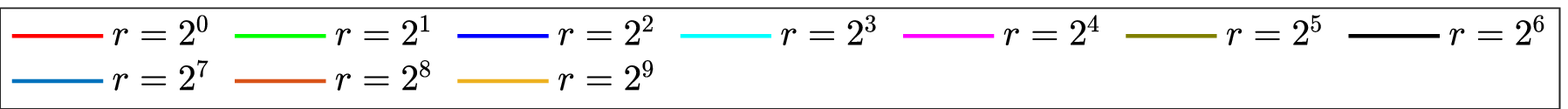}	
	\caption{Numerical measure of the order of convergence of the method. $E_N^r(\alpha)$ is given by \eqref{e:ENra}, using \eqref{e:deltaa2e2i} (left) and \eqref{e:fraclaperf} (right).}
	\label{f:orderofconvergence}
\end{figure}

In Figure \ref{f:errors}, we have plotted in semilogarithmic scale the norms of the errors $E_N^r(\alpha)$ corresponding to the experiments in Figure \ref{f:orderofconvergence}, for $ r\in\{2^0, 2^1, \ldots, 2^{10}\}$. The fact that $E_N^{2r}(\alpha) \approx E_N^{r}(\alpha)/4$ implies that the curves in the same subfigure are roughly parallel and equidistant to each other (recall that we are taking logarithms in the ordinate axis). Note also that the errors corresponding to $u(x) = \erf(x)$ are slightly better; in fact, the errors do depend in general on the characteristics of the function considered, and on the choice of $L$ made, but we will get convergence of order two in all cases. 

\begin{figure}[!htbp]
	\centering
	\includegraphics[width=0.5\textwidth, clip=true]{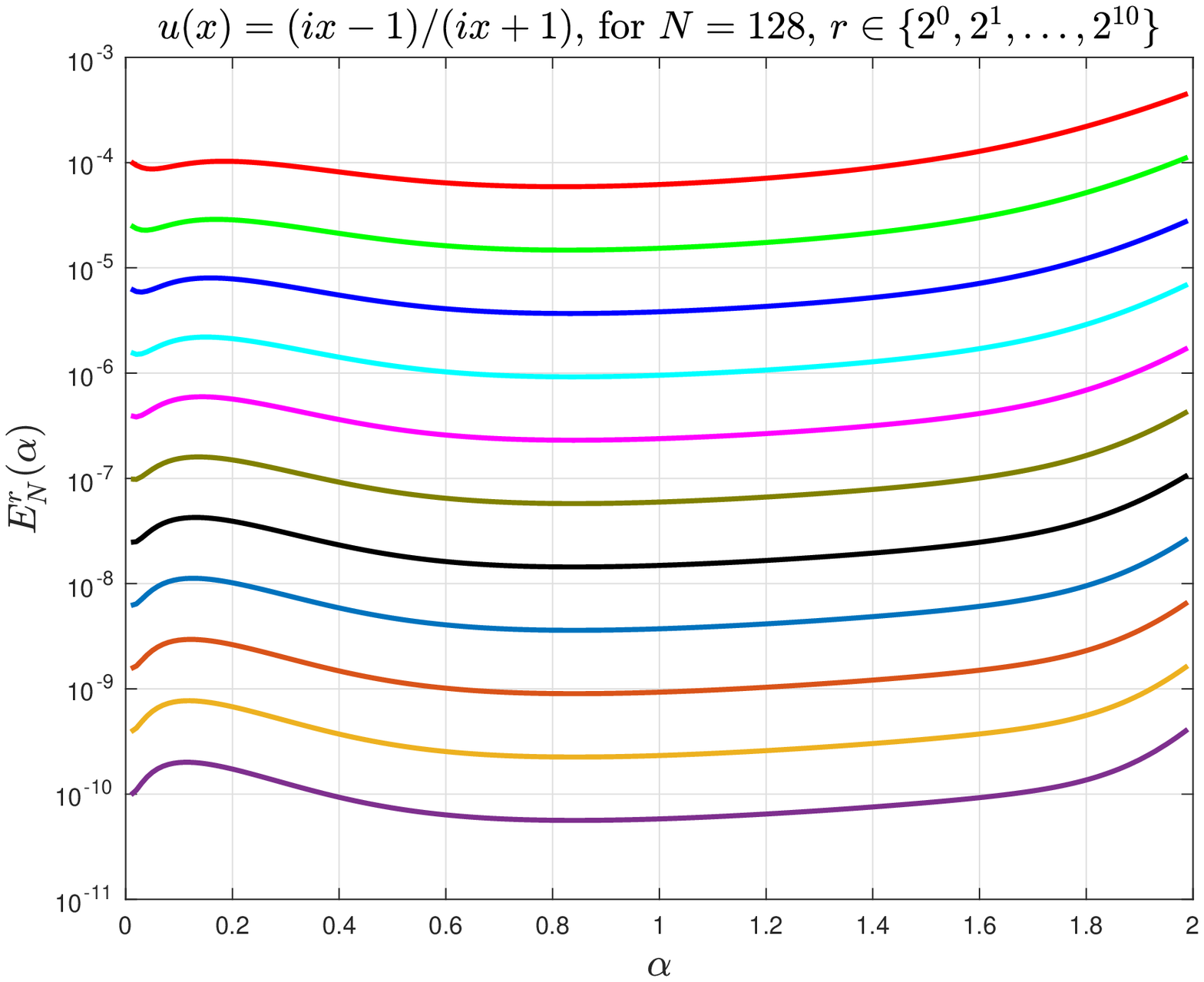}\includegraphics[width=0.5\textwidth, clip=true]{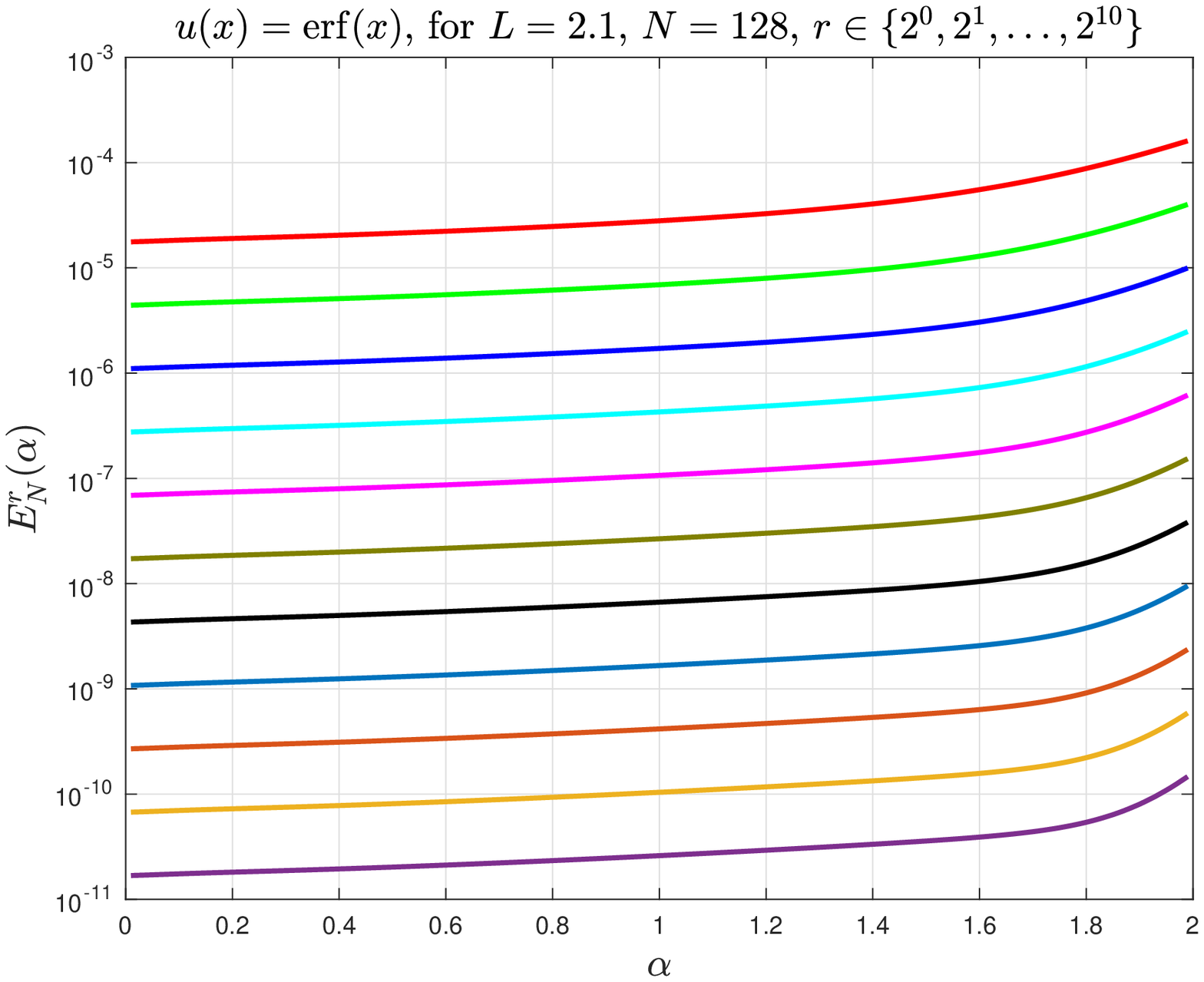}
	\includegraphics[width=\textwidth, clip=true]{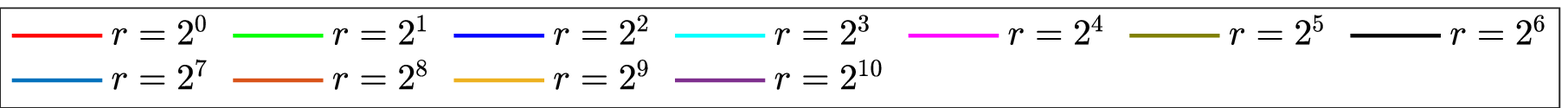}
	\caption{Norms of the errors $E_N^r(\alpha)$, as given by \eqref{e:ENra}, of the numerical experiments in Figure \ref{f:orderofconvergence}, using \eqref{e:deltaa2e2i} (left) and \eqref{e:fraclaperf} (right).}
	\label{f:errors}
\end{figure}

The second order of convergence can be observed not only for values of $r$ that are not powers of $2$, but for arbitrary values of $r$. In Figure \ref{f:errorsallr}, we have plotted  in loglog scale $E_N^{r}(\alpha)$ versus $r\in\{1, \ldots, 1000\}$, for $\alpha=0.1$ and $N\in\{2^1, 2^2, \ldots, 2^{10}\}$. The results for $u(x) = (ix-1)/(ix+1)$ (left) show a set of curves that are asymptotically straight lines with slope $-2$ (recall that we are actually plotting $\log_{10}(E_N^{r}(\alpha))$ versus $\log_{10}(r)$), and the results for $u(x) = \erf(x)$ (right) exhibit that same behavior for $N\in\{2^6, 2^7, 2^8, 2^9, 2^{10}\}$, whereas for $N\in\{2^1, 2^2, 2^3, 2^4, 2^5\}$, from a certain moment on, the norms of the error do not improve as $r$ increases, and a horizontal asymptote appears. This is due to the fact that we are approximating numerically $f(s) = \sin(s)u_{ss}(s) + 2\cos(s)u_{s}(s)$ by means of a pseudospectral method, which, as explained in Section \ref{s:usussnotexplicit}, involves approximating an even extension at $s = \pi$ of $\erf(L\cot(s))$ by means of a Fourier series, which requires an adequately large value of $N$ that may depend on $L$. Then, once that $N$ is large enough, the Fourier series approximates $\erf(L\cot(s))$ up to the accuracy of the machine, and the errors depend exclusively on the quadrature formula that we are using.

\begin{figure}[!htbp]
	\centering
	\includegraphics[width=0.5\textwidth, clip=true]{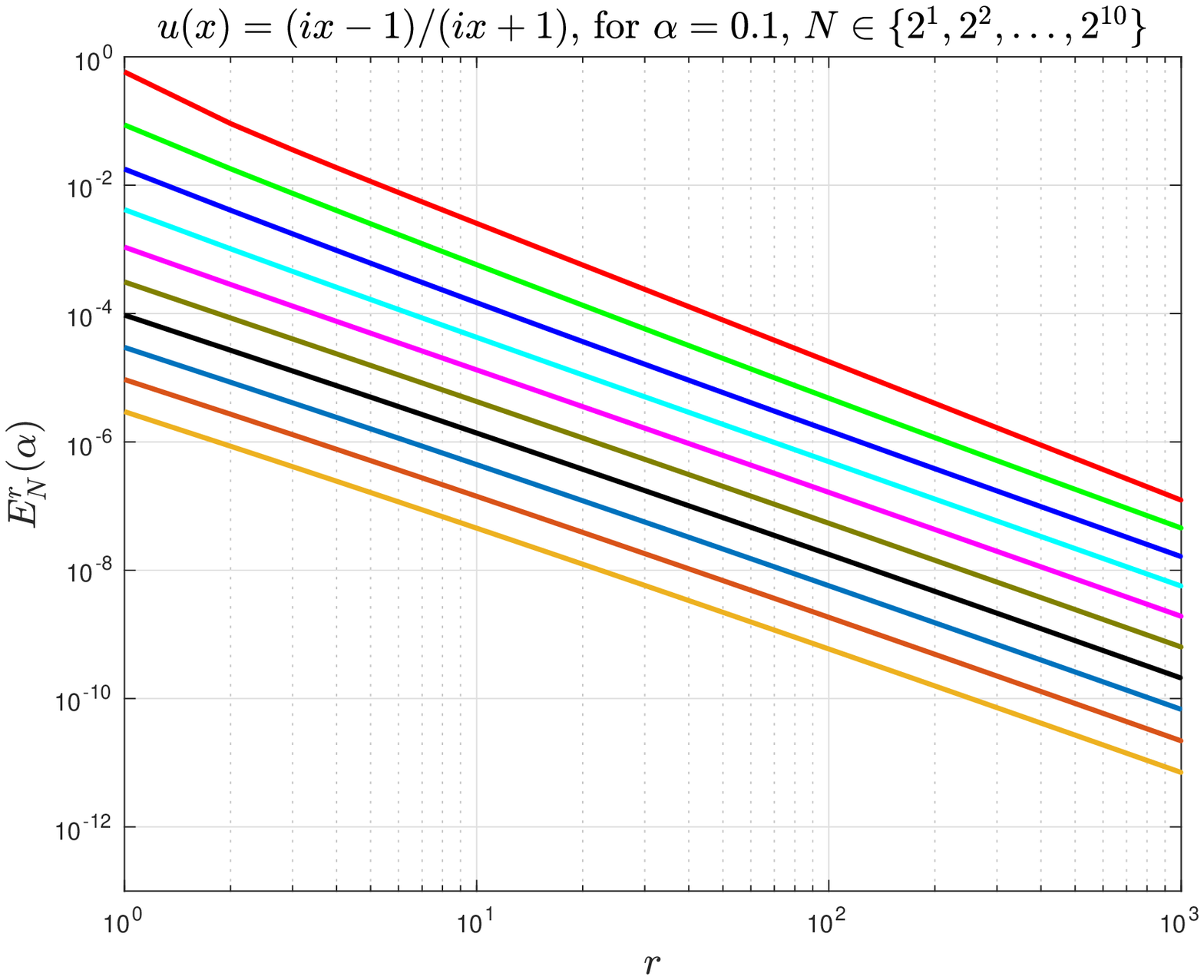}\includegraphics[width=0.5\textwidth, clip=true]{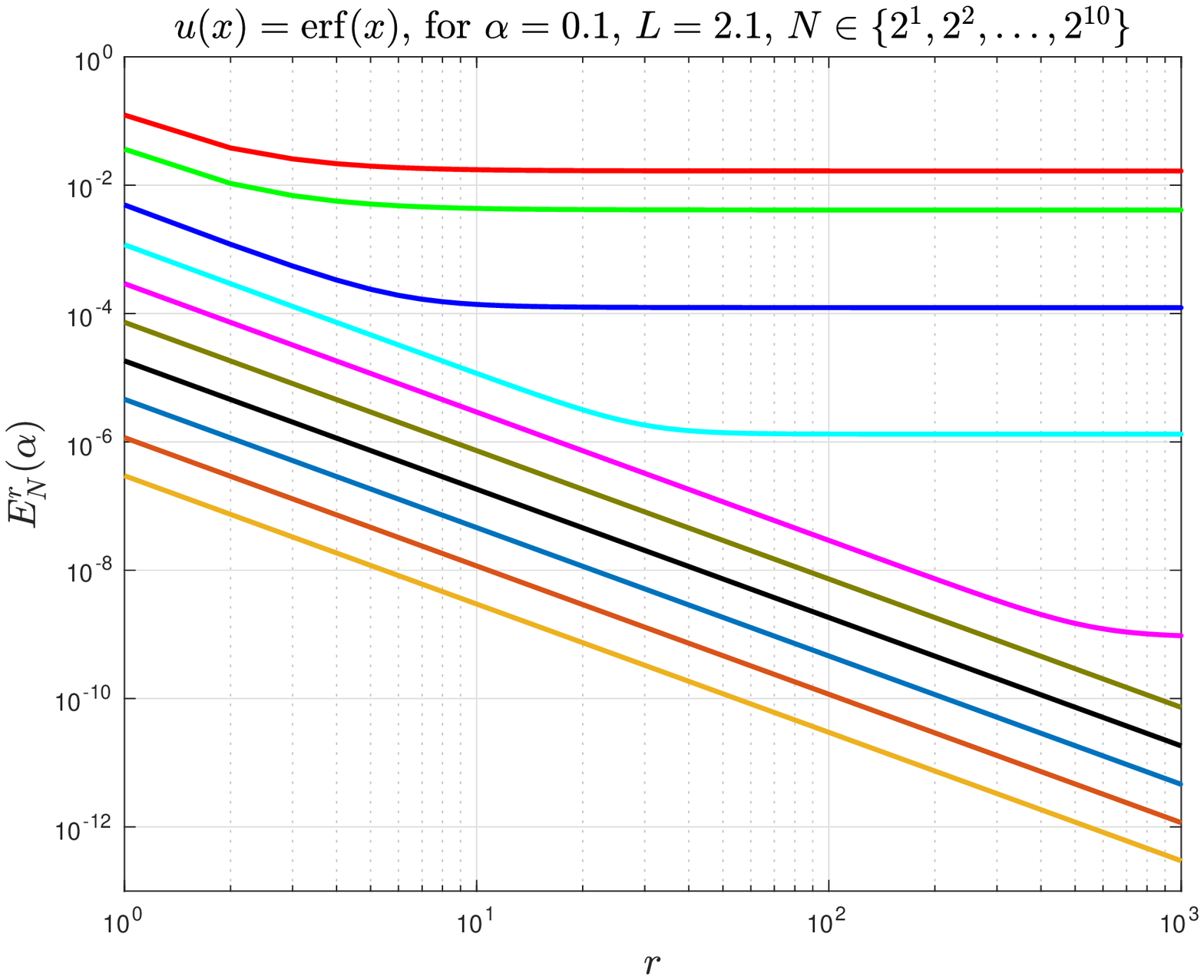}
	\includegraphics[width=\textwidth, clip=true]{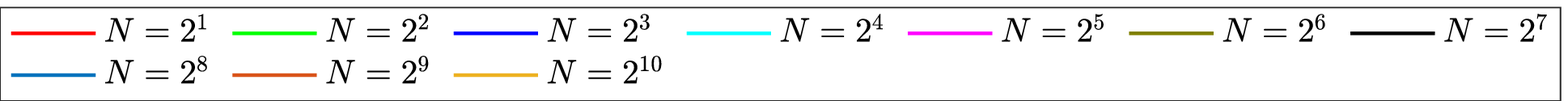}
	\caption{Norms of the errors $E_N^r(\alpha)$, as given by \eqref{e:ENra},  for $r$ and $N$, using \eqref{e:deltaa2e2i} (left) and \eqref{e:fraclaperf} (right).}
	\label{f:errorsallr}
\end{figure}

In Figure \ref{f:errorsallr}, the curves corresponding to $N = 2^{10}$ are below the other curves and, in general, for a fixed $r$, the norms of the errors become smaller as $N$ increases. Therefore, although we have proved theoretically and confirmed numerically the second order of convergence with respect to $r$, a natural question that arises is what order of convergence we have if we fix $r$ and increase $N$. Hence, in Figure~\ref{f:orderofconvergenceallN}, in order to estimate the order of convergence, we have plotted $\log_2(E_{2N}^r(\alpha) / E_N^r(\alpha))$, for $r = 1$, $N\in\{2^1, 2^2, \ldots, 2^{10}\}$, and $\alpha \in\{0.01, \ldots 0.99\}\cup\{1.01, \ldots, 1.99\}$; and in Figure \ref{f:errorsallN}, the corresponding values of $E_N^r(\alpha)$. Although further research is needed here, the results would seem to suggest that $E_{N}^r(\alpha) = \mathcal O(1/N^2)$, i.e., there is also second order of convergence with respect to $N$ when $\alpha\in[0.5, 2)$, whereas, when $\alpha\in(0, 0.5]$, $E_{N}^r(\alpha)  = \mathcal O(1/N^{1.5+\alpha})$.

\begin{figure}[!htbp]
	\centering
	\includegraphics[width=0.5\textwidth, clip=true]{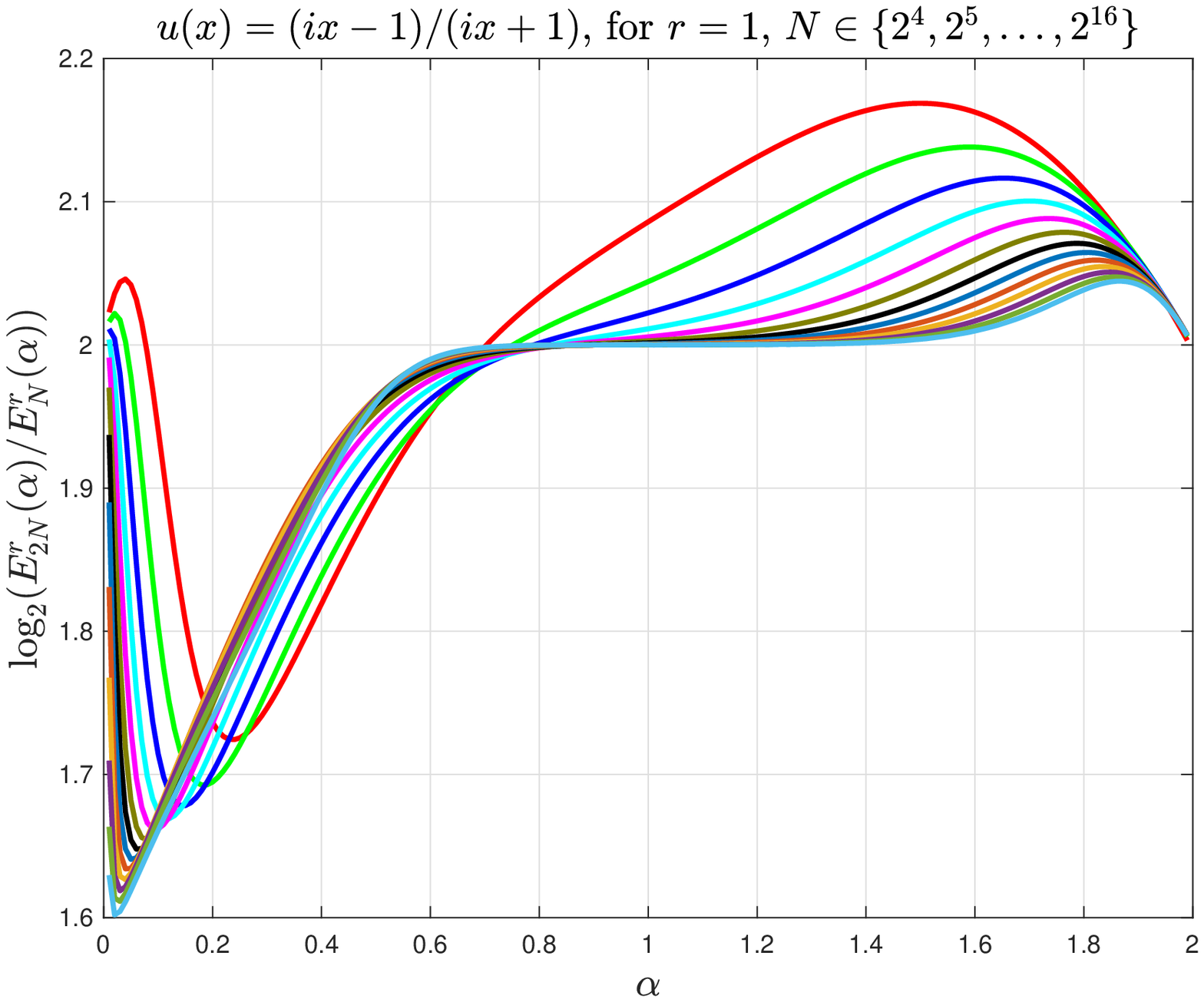}\includegraphics[width=0.5\textwidth, clip=true]{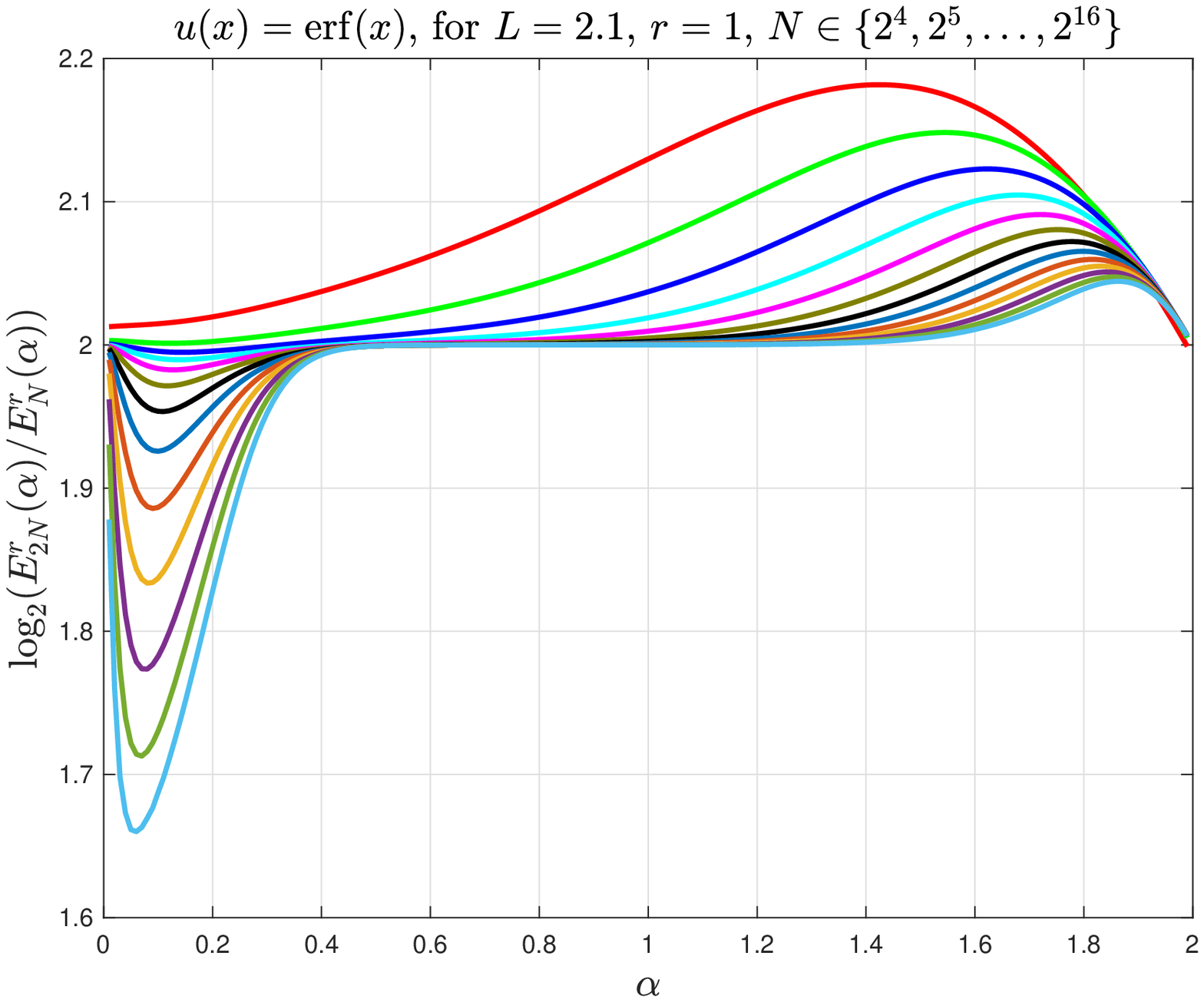}
	\includegraphics[width=\textwidth, clip=true]{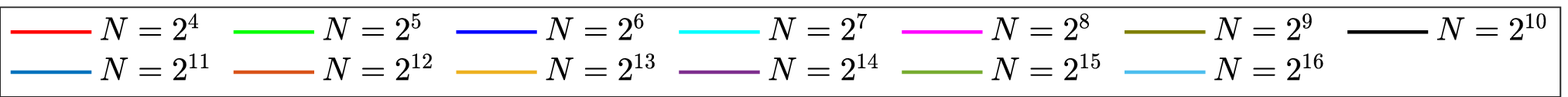}	
	\caption{Numerical measure of the order of convergence of the method. $E_N^r(\alpha)$ is given by \eqref{e:ENra}, using \eqref{e:deltaa2e2i} (left) and \eqref{e:fraclaperf} (right).}
\label{f:orderofconvergenceallN}
\end{figure}

\begin{figure}[!htbp]
	\centering
	\includegraphics[width=0.5\textwidth, clip=true]{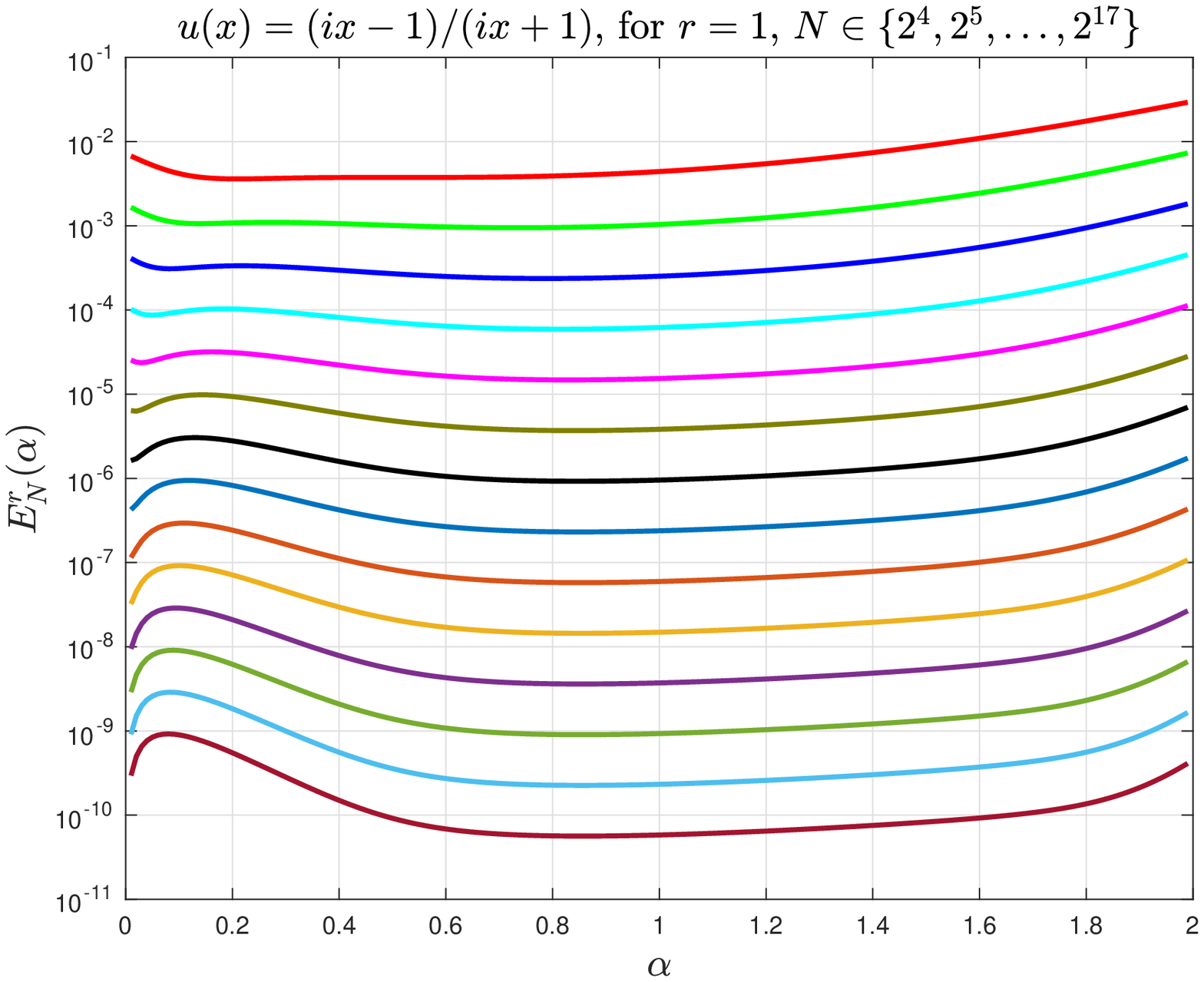}\includegraphics[width=0.5\textwidth, clip=true]{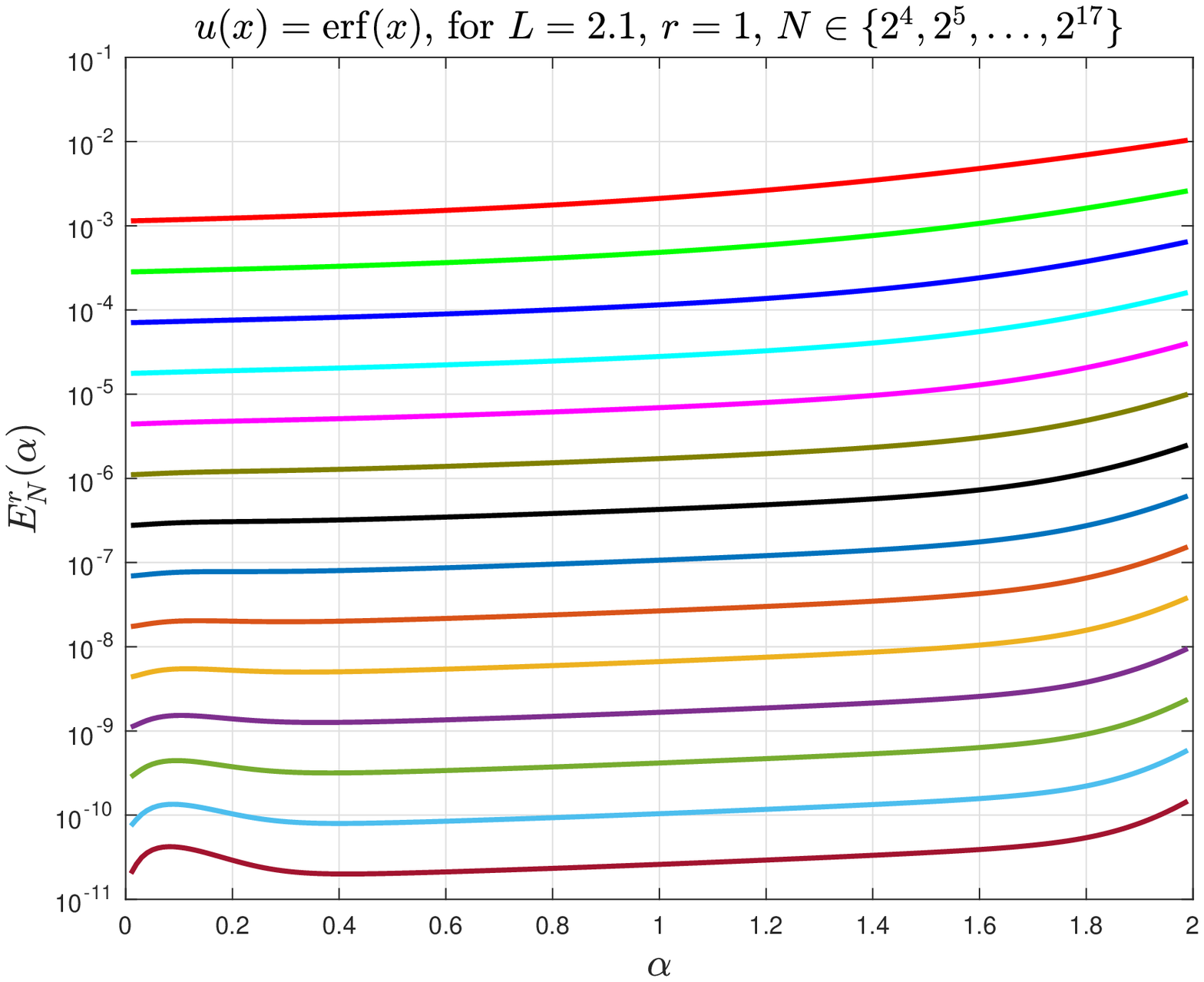}
	\includegraphics[width=\textwidth, clip=true]{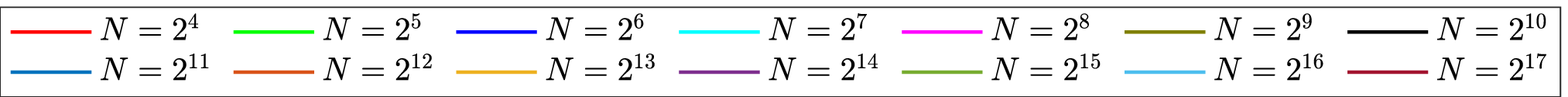}
	\caption{Norms of the errors of the numerical experiments in Figure \ref{f:orderofconvergenceallN}. $E_N^r(\alpha)$ is given by \eqref{e:ENra}, using \eqref{e:deltaa2e2i} (left) and \eqref{e:fraclaperf} (right).}
	\label{f:errorsallN}
\end{figure}

It is a well known fact that some types of differentiation matrices (like Chebyshev differentiation matrices, see, e.g., \cite{gottlieb}) are severely ill-conditioned. Indeed, the condition numbers of the Chebyshev differentiation matrices of order $N$ approximating the first and second derivatives behave respectively as $\mathcal O(N^2)$ and $\mathcal O(N^4)$. Therefore, it is natural to ask ourselves how our numerical method behaves when $2Nr$, which is the number of internal nodes, is extremely large. On the left-hand side of Figure \ref{f:errorshuge}, we have plotted the discrete $L^\infty$ norm of the errors, i.e., $\|(-\Delta)^{\alpha/2}u - (-\Delta)_{\rm num}^{\alpha/2}u\|_\infty$, for $N = 10000019$ (a prime number) and $r = 1$, taking $\alpha\in\{0.01, \ldots 0.99\}\cup\{1.01, \ldots, 1.99\}$. Even if the accuracy greatly depends on $\alpha$, it is at least of the order of $\mathcal O(10^{-10})$ for most values of $\alpha$, and still of the order of $\mathcal O(10^{-9})$ in the worst cases (i.e., when $\alpha$ is close to both $0$ and $2$). Besides, it is particularly remarkable that the errors are of the order of $\mathcal O(10^{-14})$, for a non negligible amount of values of $\alpha$, which confirms again that the implementation of the method is correct. On the other hand, in the right-hand side of Figure \ref{f:errorshuge}, we have plotted the discrete $L^\infty$ norm of the errors, for $N = 128$, and $r = 2^{16}=65536$, and similar conclusions can be drawn, except that the worst results happen only when $\alpha$ is close to $2$. In our opinion, this point is the main caveat of the method and requires further research. However, to the best of our knowledge, as of now, there seems to be no other numerical method capable of approximating the fractional Laplacian at such large amounts of nodes $N$ (and let alone doing it in just a few seconds), so, even in the worst cases, we think that the accuracy is still remarkably high.

\begin{figure}[!htbp]
	\centering
	\includegraphics[width=0.5\textwidth, clip=true]{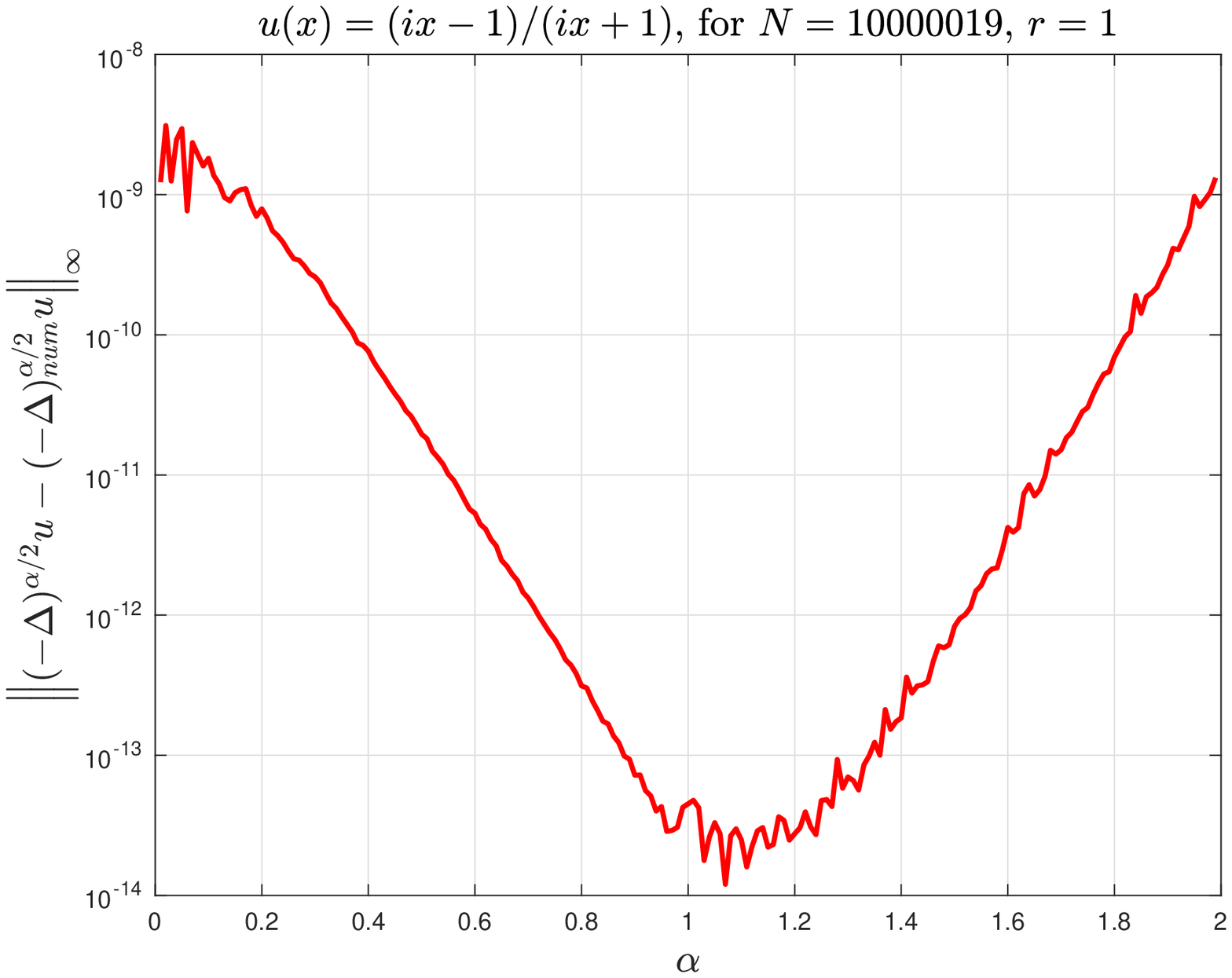}\includegraphics[width=0.5\textwidth, clip=true]{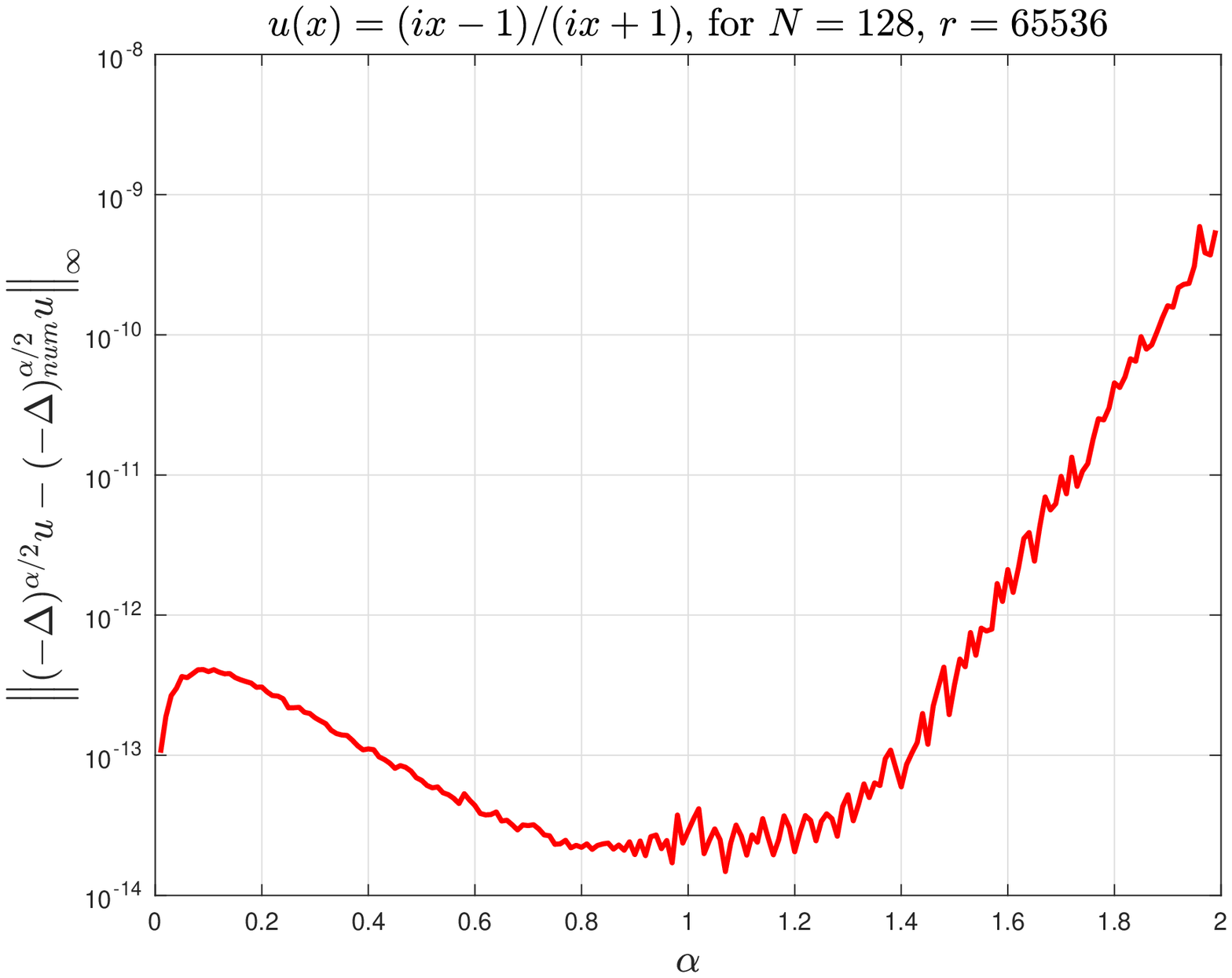}
	\caption{Discrete $L^\infty$ norms of the errors of the numerical experiments, for very large amounts $2Nr$ of internal points.}
	\label{f:errorshuge}
\end{figure}

\subsection{An evolution example}

\label{s:fracSch}

In order to show that the method in this paper can be used to simulate relevant fractional differential equations, we have applied it to the focusing fractional cubic nonlinear Schr\"odinger equation considered in \cite{cayamacuestadelahoz2020} (see the references on this equation in that paper):
\begin{equation}
	\label{e:fracschcub}
	\left\{
	\begin{aligned}
		& i\frac{\partial \psi}{\partial t} = \dfrac{1}2(-\Delta)^{\alpha/2}\psi - |\psi|^2\psi, & & t > 0,
		\\
		& \psi(x, 0) = \psi_0(x).
	\end{aligned}
	\right.
\end{equation}
As in \cite{cayamacuestadelahoz2020}, we have considered $\alpha = 1.99 > 1$, for which \eqref{e:fracschcub} has global existence \cite{GuoHuo2013,GuoSireWangZhao2018}, and have taken $\psi_0(x) = \exp(-x^2)$ as the initial data. We use the classical fourth-order Runge-Kutta method to advance in time, and compute the energy $M(t)$, which is a preserved quantity, to measure the accuracy of the numerical results:
\begin{equation*}
M(t) = \int_{-\infty}^\infty |\psi(x, t)|^2dx = L\int_{0}^\pi\frac{|\psi(L\cot(s), t)|^2}{\sin^2(s)}ds \approx \frac{L\pi}{N}\sum_{j=0}^{N-1}\frac{|\psi(L\cot(s_j), t)|^2}{\sin^2(s_j)},
\end{equation*}
where, in order to approximate $M(t)$, we have applied the composite midpoint rule to $\psi(L\cot(s), t) / \sin^2(s)$ over $s\in[0,\pi]$. This approximation yields spectral accuracy, due to the fact that $\psi(L\cot(s), t) / \sin^2(s)$ is a regular, periodic function over $s\in[0,\pi]$. The values of the other parameters are $N = 4096$, $L = 200$, $\Delta t = 0.01$, $r\in\{2^0, 2^1, \ldots, 2^6\}$. On the upper left-hand side of Figure \ref{f:evolerror}, we have plotted in semilogarithmic scale $|M(t) - M(0)| = |M(t) - \sqrt{\pi/2}|$; the results are coherent with the fact that the approximation of the fractional Laplacian behaves as $\mathcal O(1/r^2)$. Indeed, when we multiply $r$ by $2$, $|M(t) - \sqrt{\pi/2}|$ is approximately divided by $4$, which results in the curves corresponding to the different $r$ being roughly parallel. On the other hand, on the upper right-hand side, we plot the modulus of $\psi(x, t)$; on the lower left-hand side, the real part of $\psi(x, t)$; and on the lower right-hand side, the imaginary part of $\psi(x, t)$; in the three cases, we only show the parts corresponding to $x\in[-150, 150]$.
\begin{figure}[!htbp]
	\centering
	\includegraphics[width=0.5\textwidth, clip=true]{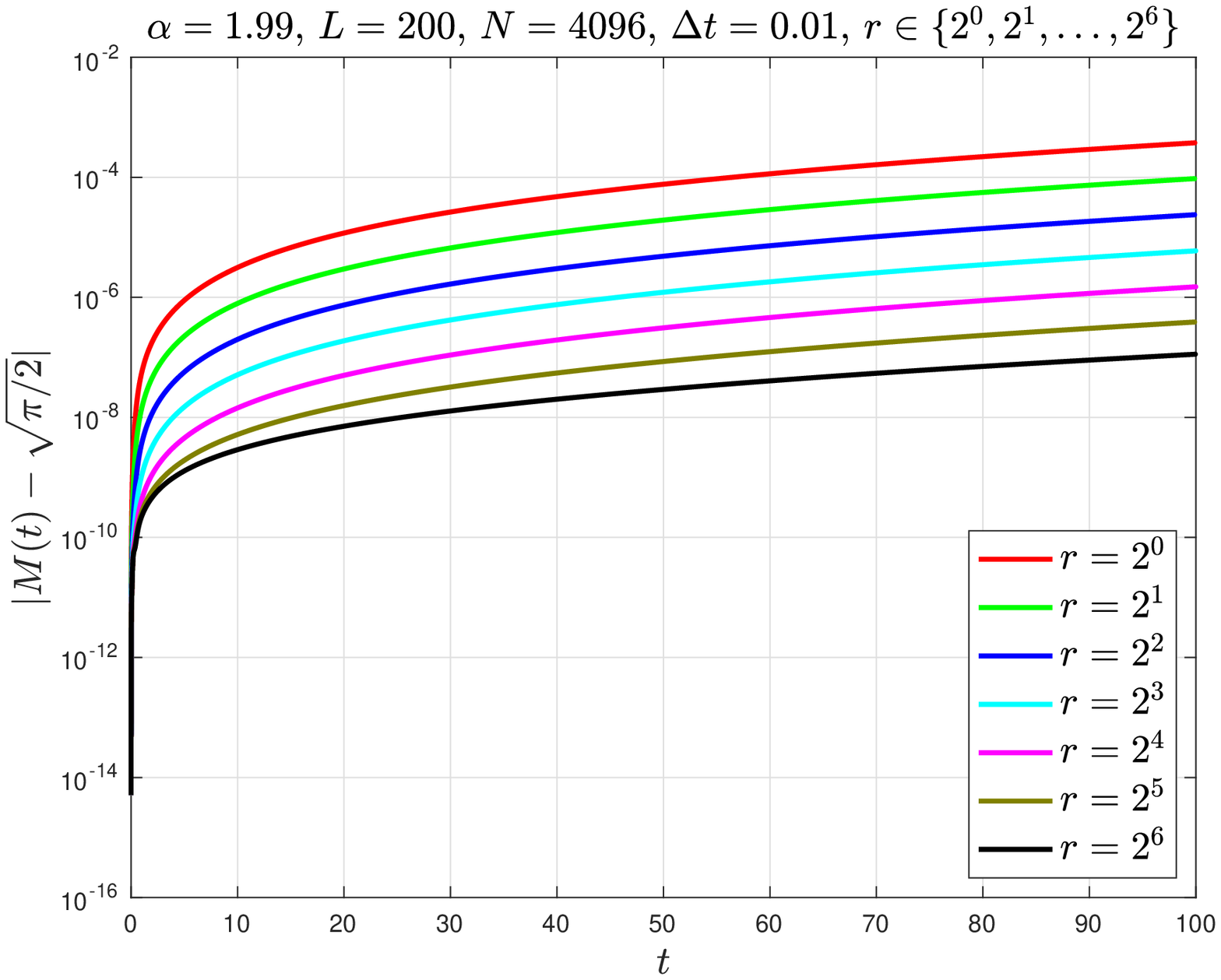}\includegraphics[width=0.5\textwidth, clip=true]{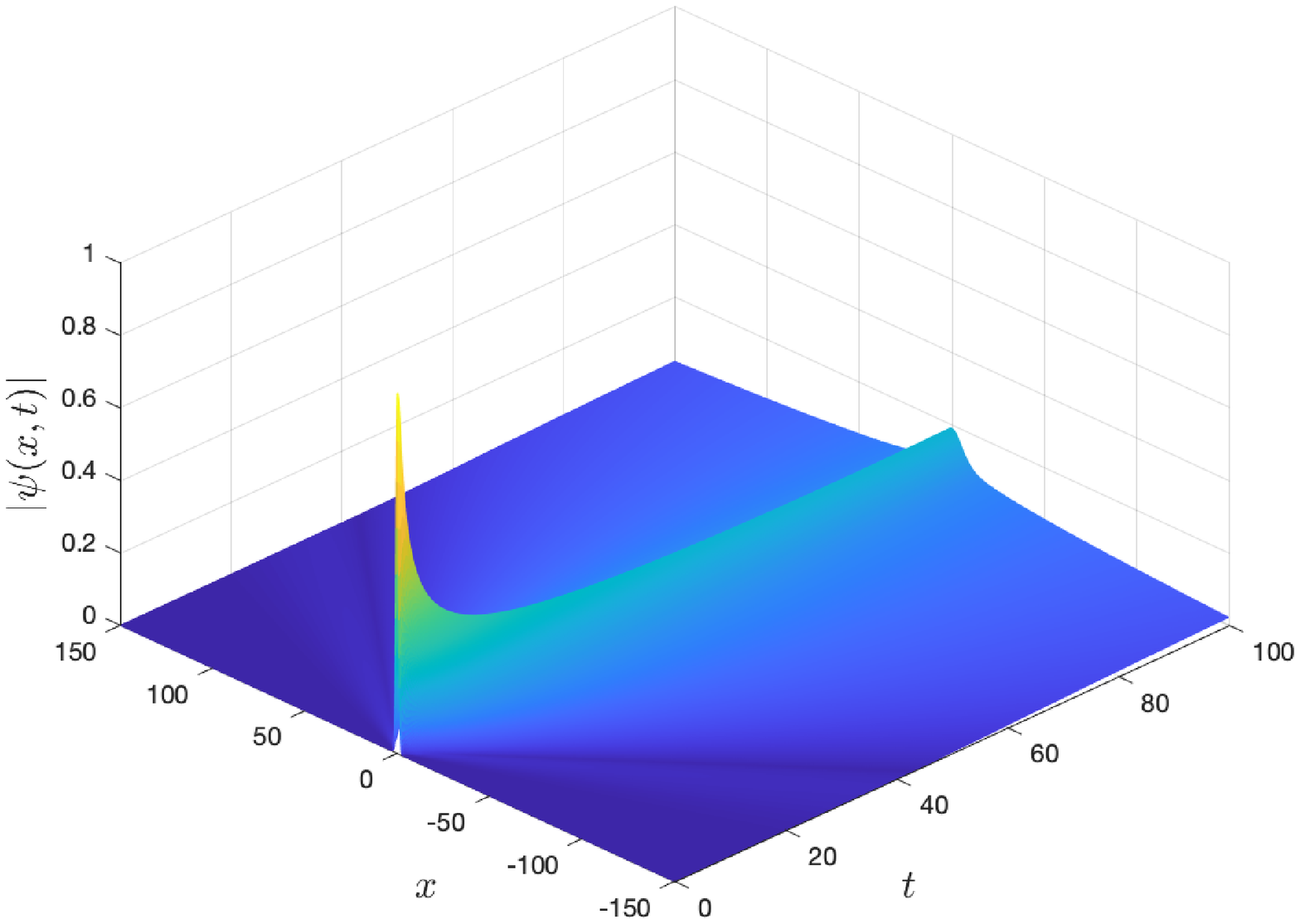}
	\includegraphics[width=0.5\textwidth, clip=true]{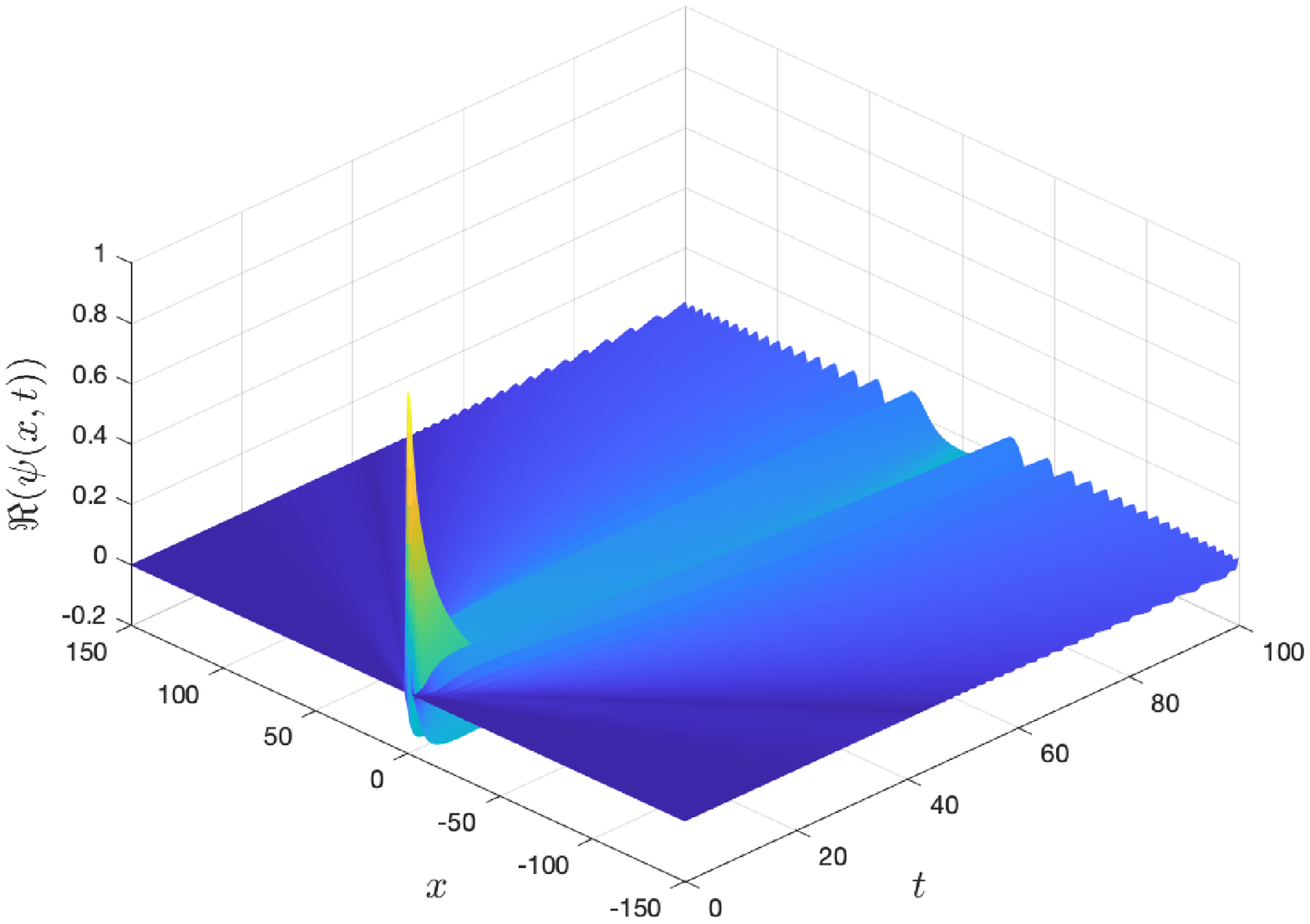}\includegraphics[width=0.5\textwidth, clip=true]{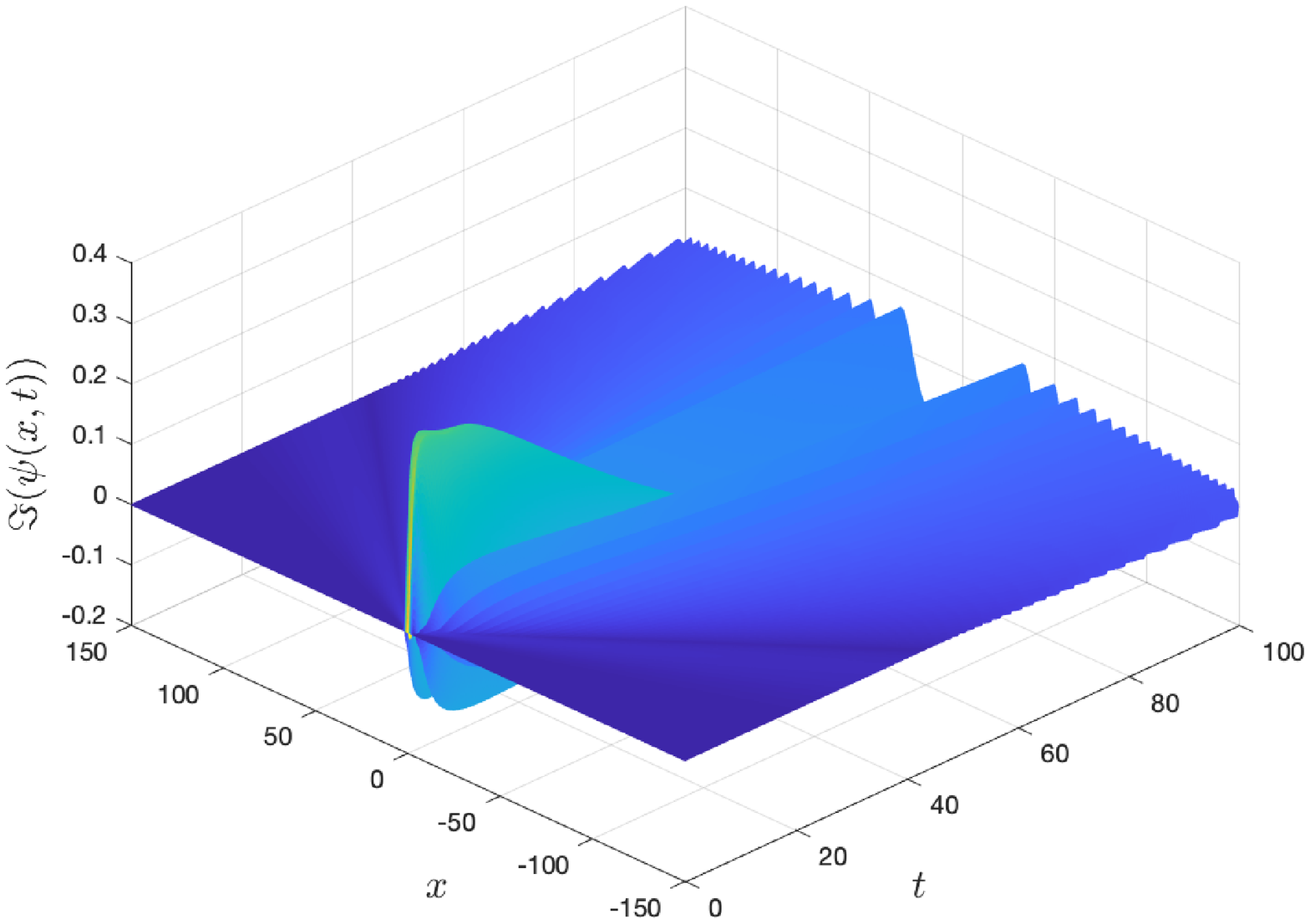}
	\caption{Upper-left: error in the preservation of the energy $E(t)$. Upper-right: modulus of $\psi(x, t)$. Lower-left: real part of $\psi(x, t)$. Lower-right: imaginary part of $\psi(x, t)$.}
	\label{f:evolerror}
\end{figure}

The numerical simulations reveal that the method is very stable for large values of $N$ and not too small values of $\Delta t$, and the choice of $r$ allows to a establish a compromise between execution speed and accuracy. Indeed, given an evolution problem involving the fractional Laplacian, it is possible to consider a first, fast executing simulation with $r = 1$, to get an idea of the evolution, and then increase $r$. For instance, in the experiments in Figure \ref{f:evolerror}, when $r = 2^6 = 64$, the maximum error in the preservation of the energy is equal to $1.1281\times10^{-7}$, and happens at $t = 100$.

\section*{Data Availability Statement}

The authors confirm that the data supporting the findings of this study are available within the article.

\section*{Declarations}

The authors have no competing interests to declare that are relevant to the content of this article.

\end{document}